\documentclass[11pt]{amsart}      
\usepackage{amsmath,amssymb,amsthm,color,enumerate,enumitem}
\usepackage{hyperref} 
\usepackage[normalem]{ulem} 
\usepackage{xcolor}

\newtheorem{thm}{Theorem}
\newtheorem{thmx}{Theorem}

\newtheorem{prop}[thm]{Proposition}
\newtheorem{cor}[thm]{Corollary}
\newtheorem{lemma}[thm]{Lemma}
\newtheorem{fact}[thm]{Fact}
\newtheorem{question}[thm]{Question}
\theoremstyle{definition}
\newtheorem{defin}[thm]{Definition}
\newtheorem{convention}[thm]{Convention}
 
\theoremstyle{remark}
\newtheorem{remark}[thm]{Remark}

\newcommand{\er}{\mathbb R}
\newcommand{\en}{\mathbb N}

\newcommand{\setsep}{:\;}

\newcommand{\A}{\mathcal A}
\newcommand{\F}{\mathcal F}
\newcommand{\HH}{\mathcal H}
\newcommand{\B}{\mathcal B}
\newcommand{\C}{\mathcal C}
\newcommand{\U}{\mathcal U}
\newcommand{\M}{\mathcal M}

\newcommand{\SSS}{\mathcal S}

\newcommand{\w}{\operatorname{w}}
\newcommand{\dom}{\operatorname{Dom}}
\def \rng {\operatorname{Rng}}

\def \alg{\operatorname{alg}} 
\def \suppt {\operatorname{suppt}}

\makeatletter
\@namedef{subjclassname@2020}{\textup{2020} Mathematics Subject Classification} 
\makeatother

\subjclass[2020]{54C15, 46B50 (primary), 54D30, 46B26, 03C30 (secondary)}

\keywords{retractional skeleton, projectional skeleton, Eberlein compact, semi-Eberlein compact}

\begin{document}

\title[Characterization of Eberlein compacta]{Characterization of (semi-)Eberlein compacta using retractional skeletons}

\author[C. Correa]{Claudia Correa}
\address[C. Correa]{Centro de Matem\'atica, Computa\c c\~ ao e Cogni\c c\~ ao, Universidade Federal do ABC, Avenida dos Estados, 5001, Santo Andr\' e, Brasil}
\email{claudiac.mat@gmail.com, claudia.correa@ufabc.edu.br}

\author[M. C\'uth]{Marek C\'uth}
\address[M. C\'uth]{Faculty of Mathematics and Physics, Department of Mathematical Analysis\\
Charles University\\
186 75 Praha 8\\
Czech Republic}
\email{cuth@karlin.mff.cuni.cz}
\address[M. C\'uth]{Institute of Mathematics of the Czech Academy of Sciences, \v{Z}itn\'a 25, 115 67 Prague 1, Czech Republic}

\author[J. Somaglia]{Jacopo Somaglia}
\address[J. Somaglia]{Dipartimento di Matematica ``F. Enriques''\\ Universit\`a degli Studi di Milano\\ Via Cesare Saldini 50\\ 20133 Milano\\ Italy}
\email{jacopo.somaglia@unimi.it}

\begin{abstract}We deeply study retractions associated to suitable models in compact spaces admitting a retractional skeleton and find several interesting consequences. Most importantly, we provide a new characterization of Valdivia compacta using the notion of retractional skeletons, which seems to be helpful when characterizing its subclasses. Further, we characterize Eberlein and semi-Eberlein compacta in terms of retractional skeletons and show that our new characterizations give an alternative proof of the fact that continuous image of an Eberlein compact is Eberlein as well as new stability results for the class of semi-Eberlein compacta, solving in particular an open problem posed by Kubis and Leiderman.
\end{abstract}

\thanks{C. Correa has been partially supported by Funda\c c\~ao de Amparo \`a Pesquisa do Estado de S\~ao Paulo (FAPESP) grants 2018/09797-2 and 2019/08515-6. M.~C\'uth has been supported by Charles University Research program No. UNCE/SCI/023 and by the GA\v{C}R project 19-05271Y. J.~Somaglia has been supported  by Universit\`a degli Studi di Milano, Research Support Plan 2019 and by Gruppo Nazionale per l'Analisi Matematica, la Probabilit\`a e le loro Applicazioni (GNAMPA) of Istituto Nazionale di Alta Matematica (INdAM), Italy.}

\maketitle

\section{Introduction}

The study of the class of compact spaces that admit a retractional skeleton was initiated in \cite{KM06}, where the authors proved that a compact space is Valdivia if and only if it admits a commutative retractional skeleton. Later, in \cite{kubis09} a notion similar to retractional skeletons in the context of Banach spaces was introduced; namely, the notion of projectional skeletons. In some sense, those notions are dual to each other. More precisely, if a compact space $K$ admits a retractional skeleton, then $\C(K)$ admits a projectional skeleton and if a Banach space $X$ admits a projectional skeleton, then $(B_{X^*},w^*)$ admits a retractional skeleton. The class of Banach (compact) spaces with a projectional (retractional) skeleton was deeply investigated from various perspectives and nowadays we have quite a rich family of natural examples and interesting results related to various fields of mathematics such as topology \cite{So20}, Banach space theory \cite{FM18}, theory of von Neumann algebras \cite{BHK16} or $JBW^*$-triples \cite{BHKPP18}. Let us note that, quite surprisingly, there was independently introduced also the notion of monotonically retractable topological spaces which turned out to be very closely related to the study of compact spaces that admit a retractional skeleton, see \cite{CK15}, and from there on, several results and modifications of the corresponding notions were considered, see e.g. \cite{CGR17, GFRH16, GFYA20}.

One of the recent streams in the area is to describe some classes of Banach (compact) spaces using the notion of projectional (retractional) skeletons, see e.g. \cite{KM06, CF17, FM18, kubis09} where the characterizations of Plichko spaces (and Valdivia compacta), WLD spaces (and Corson compacta), Asplund spaces, WLD+Asplund spaces and WCG spaces were given.

The main two results of this paper (Theorems \ref{thm:Intro1} and \ref{thm:Intro2}) are characterizations of Eberlein and semi-Eberlein compacta, respectively, using the notion of retractional skeletons. 
Let us recall that given a set $I$ we define
\[
c_0(I):=\{x\in \er^I\colon (\forall \varepsilon > 0) |\{i\in I\colon |x(i)|>\varepsilon\}| < \omega\}\subset \er^I
\]
and that a compact space $K$ is \emph{Eberlein} if it homeomophically embeds into $c_0(I)$, for some set $I$. This is a central concept in Banach space theory, as it is known that a compact space is Eberlein if and only if it is homeomorphic to a weakly compact set of a Banach space, see \cite{AL68} or \cite[Corollary 13.19]{FHHMZ}. For the notion of shrinkingness we refer the reader to Definition~\ref{def:shrinking}.

\begin{thmx}\label{thm:Intro1}
Let $K$ be a compact space. Then the following conditions are equivalent:
\begin{enumerate}
    \item $K$ is Eberlein.
    \item There exist a bounded set $\A\subset \C(K)$ separating the points of $K$ and a retractional skeleton $\mathfrak{s} = (r_s)_{s\in\Gamma}$ on $K$ such that $\mathfrak{s}$ is $\A$-shrinking.
    \item There exist a countable family $\A$ of subsets of $B_{\C(K)}$ and a full retractional skeleton $\mathfrak{s} = (r_s)_{s\in\Gamma}$ on $K$ such that
\begin{enumerate}[label=(\alph*)]
    \item For every $A\in\A$ there exists $\varepsilon_A>0$ such that $\mathfrak{s}$ is $(A,\varepsilon_A)$-shrinking, and
    \item\label{cond:fullInBall0} for every $\varepsilon>0$ we have $B_{\C(K)} = \bigcup\{A\in \A\colon \varepsilon_A < \varepsilon\}$.
\end{enumerate}
\end{enumerate}
\end{thmx}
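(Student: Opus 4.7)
The plan is to prove the cycle of implications $(1) \Rightarrow (3) \Rightarrow (2) \Rightarrow (1)$, exploiting throughout the framework of retractions associated to elementary submodels developed earlier in the paper.

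For $(1) \Rightarrow (3)$, I would fix an embedding $\iota \colon K \hookrightarrow B_{c_0(I)}$ and consider the full retractional skeleton $(r_M)_M$ indexed by countable elementary submodels $M \prec H(\theta)$ containing $\iota$ and $K$, where $r_M$ is the retraction associated to $M$ in the sense developed in the paper. I would take $\A_0 = \{A_n\}_{n \in \en}$ with each $A_n$ a countable collection of rational linear combinations of finitely many coordinate functionals $\pi_i \circ \iota$ normalized into $B_{\C(K)}$, and set $\varepsilon_{A_n} := 1/n$. The $(A_n, 1/n)$-shrinking property follows from the $c_0$-tail property: on $r_M(K)$, coordinates $\pi_i \circ \iota$ with $i \notin M$ are controlled by the corresponding $c_0$-tail, which can be made smaller than $1/n$ by the choice of the combinations in $A_n$. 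Condition (b) follows from a Stone--Weierstrass approximation showing that every $f \in B_{\C(K)}$ is uniformly approximated to any tolerance $\varepsilon > 0$ by a rational combination of coordinate functionals, hence belongs to some $A_n$ with $1/n < \varepsilon$.

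For $(3) \Rightarrow (2)$, I would set $\A := \bigcup \A_0 \subseteq B_{\C(K)}$. This is bounded, and condition (b) applied across $\varepsilon > 0$ shows that every $f \in B_{\C(K)}$ belongs to some $A \in \A_0$, whence $\A = B_{\C(K)}$, which separates points of $K$ by Urysohn's lemma. For $\A$-shrinking: given $f \in \A$ and $\varepsilon > 0$, condition (b) produces $A \in \A_0$ with $f \in A$ and $\varepsilon_A < \varepsilon$; the $(A, \varepsilon_A)$-shrinking of $\mathfrak{s}$ then gives $\limsup_s \|f - f \circ r_s\|_\infty \leq \varepsilon_A < \varepsilon$, and letting $\varepsilon \to 0$ yields uniform convergence.

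The implication $(2) \Rightarrow (1)$ is the main obstacle. A naive attempt to embed $K$ into $c_0(\A \times \Gamma)$ via $\varphi(x)(f, s) := f(x) - f(r_s(x))$ does not directly give a $c_0$-valued map when $\A$ is uncountable, since for fixed $s \in \Gamma$ there can still be uncountably many $f \in \A$ with $\|f - f \circ r_s\|_\infty > \varepsilon$, so the $c_0$-coordinate-summability for each individual $x \in K$ is not automatic from the $\A$-shrinking property alone. The strategy is therefore to first upgrade (2) to the structured form of (3): from a bounded separating $\A$ and an $\A$-shrinking skeleton, decompose $\A$ into a countable family of pieces $A_n \subseteq B_{\C(K)}$ with controlled shrinking tolerance $\varepsilon_{A_n} \to 0$, while keeping $B_{\C(K)} = \bigcup\{A \in \A_0 \colon \varepsilon_A < \varepsilon\}$ for every $\varepsilon > 0$. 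This decomposition is precisely where the model-theoretic machinery of retractions associated to countable elementary submodels plays the essential role, by producing, for each submodel $M$, a countable piece of $\A$ on which the retraction $r_M$ has controlled oscillation. Once condition (3) is obtained, the Eberlein embedding is assembled coordinate by coordinate from the pieces $A_n$, and the $c_0$-summability property becomes a combinatorial consequence of the $(A_n, \varepsilon_{A_n})$-shrinking with $\varepsilon_{A_n} \to 0$.
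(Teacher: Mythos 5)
Your proposed cycle $(1)\Rightarrow(3)\Rightarrow(2)\Rightarrow(1)$ differs from the paper's $(1)\Rightarrow(2)\Rightarrow(3)\Rightarrow(1)$ (proved via the more general Theorem~\ref{t:mainEberleinThroughSkeleton2} with $D=K$ together with Lemma~\ref{lem:shrinkingImpliesFull}), and the direct arrow $(3)\Rightarrow(2)$ that your ordering requires does not hold with the set $\A$ you propose. If you set $\A:=\bigcup\A_0$, then condition~\ref{cond:fullInBall0} forces $\A=B_{\C(K)}$. But for $A=B_{\C(K)}$ the pseudometric $\rho_{B_{\C(K)}}(k,l)=\sup_{f\in B_{\C(K)}}|f(k)-f(l)|$ equals $2$ whenever $k\ne l$ (Urysohn gives $f$ with $f(k)=1$, $f(l)=-1$), so $B_{\C(K)}$-shrinking would force $r_{s_n}(x)$ to be \emph{eventually equal} to $r_s(x)$; already for $K=[0,1]$ and the skeleton of retractions onto finite subsets this fails. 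Your argument only shows that each individual $f$ lies in some $A\in\A$ with small $\varepsilon_A$; it does not control the supremum over all $f\in\A$ simultaneously, which is what the pseudometric $\rho_\A$ requires. In the paper the implication $(3)\Rightarrow(2)$ is only obtained indirectly through $(3)\Rightarrow(1)\Rightarrow(2)$, precisely because a direct construction of a separating bounded shrinking family from $(3)$ is not available.

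There is a second concrete error in $(1)\Rightarrow(3)$: you want each $A_n$ to be a \emph{countable} collection of rational combinations of coordinate functionals, but then $\bigcup_n A_n$ is countable and cannot equal $B_{\C(K)}$ when $K$ is nonmetrizable, contradicting condition~\ref{cond:fullInBall0}. What the paper does in this step (in the proof of $(ii)\Rightarrow(iii)$ of Theorem~\ref{t:mainEberleinThroughSkeleton2}) is take $\A_n$ to be the set of all bounded $n$-fold algebraic combinations of $\A$ and then fatten it to $\A_{n,m}=(\A_n+\tfrac{1}{2m}B_{\C(K)})\cap B_{\C(K)}$; these are uncountable sets forming a countable \emph{family}, and the tube around $\A_n$ is exactly what makes the covering condition work (membership, not just approximation). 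Your Stone--Weierstrass remark proves approximation but not membership in any $A_n$.

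Finally, you correctly identify $(2)\Rightarrow(1)$ as the main obstacle, but your sketch of how $(3)$ yields the $c_0$-embedding (``assembled coordinate by coordinate,\ldots a combinatorial consequence'') does not engage with what actually has to happen. The paper's route is: first convert the skeleton into a Valdivia embedding $\varphi\colon K\to[-1,1]^{\HH}$ with $\varphi[D(\mathfrak s)]\subset\Sigma(\HH)$ via Theorem~\ref{thm:Intro3}, then run a transfinite induction over $\w(K)$ using the chain of canonical retractions $(r_\alpha)_{\alpha\le\kappa}$ from Proposition~\ref{prop:rri} together with the $(A,\varepsilon_A)$-shrinking estimate of Lemma~\ref{l: retraction from up-directed subset semi-eberlein caseImproved} to produce, for each $\varepsilon>0$, a countable decomposition $I=\bigcup_n I_n^\varepsilon$ controlling $\{i: |x(i)|>\varepsilon\}$ (Proposition~\ref{prop:semiEberleinFarmakiImproved}), and only then assemble the $c_0(I\times\omega)$-embedding via the Farmaki-type argument of Proposition~\ref{prop:decompImpliesSemiEberlein}. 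None of this is automatic, and the shrinking constants $\varepsilon_A$ enter in a genuinely quantitative way (the factor $14\varepsilon_A<\varepsilon$). Without at least the decomposition step, there is no proof of $(3)\Rightarrow(1)$, and hence your cycle does not close.
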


Recall that a compact space $K$ is Eberlein if and only if $\C(K)$ is WCG if and only if $\C(K)$ is a subspace of a WCG space, thus Theorem \ref{thm:Intro1} is naturally connected to the characterization of WCG Banach spaces and their subspaces presented in \cite{FM18}. Moreover, from Theorem~\ref{thm:Intro1} one may deduce that continuous images of Eberlein compacta are Eberlein, see Remark \ref{rem:contImageEberlein} below. Quite many steps of our proof seem to be much more flexible and we believe that those may be used in order to find characterizations of other natural subclasses of Valdivia compacta (the most important in this respect is probably Theorem~\ref{thm:Intro3} mentioned below). This is witnessed by the characterization of semi-Eberlein compacta presented in Theorem \ref{thm:Intro2}. Recall that, following \cite{KL04}, we say a compact space $K$ is \emph{semi-Eberlein} if there exists a homeomorphic embedding $h:K\to\er^I$ such that $h^{-1}[c_0(I)]$ is dense in $K$. We denote by $D(\mathfrak s)$ the set induced by a retractional skeleton $\mathfrak s$ (see Definition \ref{def: r-skeleton}).

\begin{thmx}\label{thm:Intro2}
Let $K$ be a compact space. Then the following conditions are equivalent:
\begin{enumerate}
    \item\label{it: semi-Eberlein} $K$ is semi-Eberlein.
    \item\label{it: semi-EberleinRetractionalSkeleton} There exist a dense subset $D\subset K$, a bounded set $\A\subset \C(K)$ separating the points of $K$ and a retractional skeleton $\mathfrak{s} = (r_s)_{s\in\Gamma}$ on $K$ with $D\subset D(\mathfrak{s})$ such that
\begin{enumerate}[label=(\alph*)]
    \item $\mathfrak{s}$ is $\A$-shrinking with respect to $D$, and
        \item $\lim_{s\in\Gamma'} r_s(x)\in D$, for every $x\in D$ and every up-directed subset $\Gamma'$ of $\Gamma$.
\end{enumerate}
\item  There exist a dense set $D\subset K$, a countable family $\A$ of subsets of $B_{\C(K)}$ and a retractional skeleton $\mathfrak{s} = (r_s)_{s\in\Gamma}$ on $K$ with $D\subset D(\mathfrak{s})$ such that
\begin{enumerate}[label=(\alph*)]
    \item For every $A\in\A$ there exists $\varepsilon_A>0$ such that $\mathfrak{s}$ is $(A,\varepsilon_A)$-shrinking with respect to $D$,
    \item\label{cond:fullInBall2} for every $\varepsilon>0$ we have $B_{\C(K)} = \bigcup\{A\in \A\colon \varepsilon_A < \varepsilon\}$, and
    \item\label{it:inSpecialCaseEquivalentToCommutativity0} $\lim_{s\in\Gamma'} r_s(x)\in D$, for every $x\in D$ and every up-directed subset $\Gamma'$ of $\Gamma$.
\end{enumerate}
\end{enumerate}
\end{thmx}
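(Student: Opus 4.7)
\emph{Plan.} I prove the three implications (3)$\Rightarrow$(2)$\Rightarrow$(1)$\Rightarrow$(3), paralleling the scheme of Theorem~\ref{thm:Intro1} but carrying the dense set $D$ throughout.

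For (3)$\Rightarrow$(2), I collapse the countable family $\A$ into the single set $\widetilde\A:=\bigcup_{A\in\A}A\subset B_{\C(K)}$. Condition~\ref{cond:fullInBall2} applied with any fixed $\varepsilon>0$ forces $\widetilde\A\supset B_{\C(K)}$, so $\widetilde\A$ is bounded and separates the points of $K$. To verify that $\mathfrak s$ is $\widetilde\A$-shrinking with respect to $D$, fix $\varepsilon>0$: by~\ref{cond:fullInBall2} it suffices to look at those $A\in\A$ with $\varepsilon_A<\varepsilon$, and each of them contributes a countable exceptional set through its $(A,\varepsilon_A)$-shrinking property. The limit clause~\ref{it:inSpecialCaseEquivalentToCommutativity0} transfers verbatim to condition (b) of (2).

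For (2)$\Rightarrow$(1), the natural embedding is the evaluation map $h\colon K\to\er^\A$ given by $h(x)(a):=a(x)$. Boundedness of $\A$ places the image in a compact box, and separation of points by $\A$ makes $h$ a homeomorphic embedding. The $\A$-shrinking property of $\mathfrak s$ with respect to $D$, combined with $D\subset D(\mathfrak{s})$ and the characterization of $D(\mathfrak s)$ as the set of points captured by directed limits of the retractions, forces $h(x)\in c_0(\A)$ for every $x\in D$: for each $\varepsilon>0$ only finitely many retractions are needed to $\varepsilon/2$-approximate $x$, and each of them leaves only countably many coordinates exceeding $\varepsilon$. Since $D\subset h^{-1}[c_0(\A)]$ is dense in $K$, $K$ is semi-Eberlein.

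The main obstacle lies in (1)$\Rightarrow$(3). Given an embedding $h\colon K\hookrightarrow\er^I$ with $D:=h^{-1}[c_0(I)]$ dense in $K$, I construct a retractional skeleton via the elementary-submodel machinery used for Theorem~\ref{thm:Intro1}: for each countable elementary submodel $M\prec H(\lambda)$ containing $K$, $h$ and suitable generating data, associate a retraction $r_M\colon K\to K$ which, informally, projects the $h$-coordinates outside $M\cap I$ to zero (and lifts back to $K$ by compactness and elementarity). The countable family $\A$ arises by partitioning the coordinate functionals $\pi_i\circ h$ according to their oscillation on $K$, which yields both the $\varepsilon_A$'s and the fullness condition~\ref{cond:fullInBall2}. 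The delicate step is clause~\ref{it:inSpecialCaseEquivalentToCommutativity0}: one must design the $r_s$ so that $r_s[D]\subset D$ and then show that for $x\in D$ and up-directed $\Gamma'\subset\Gamma$ the monotone limit $\lim_{s\in\Gamma'}r_s(x)$ remains in $D$. This reduces to the stability of $c_0(I)$ under coordinate projections onto countable subsets, coupled with the fact that the directed union $\bigcup_{s\in\Gamma'}(M_s\cap I)$ stays within a controlled index set. Once this $D$-preservation is secured, the verification of the shrinking and fullness conditions proceeds along the same lines as in Theorem~\ref{thm:Intro1}.
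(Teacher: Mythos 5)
Your proposal contains a genuine gap, and it appears in the very first step (3)$\Rightarrow$(2). You set $\widetilde\A:=\bigcup_{A\in\A}A$, and you correctly observe that condition~\ref{cond:fullInBall2} forces $\widetilde\A=B_{\C(K)}$. But then $\rho_{\widetilde\A}$ is the discrete metric: for $k\ne l$, Urysohn's lemma produces $f\in B_{\C(K)}$ with $|f(k)-f(l)|=2$, so $\rho_{\widetilde\A}(k,l)=2$. Hence $\mathfrak s$ being $\widetilde\A$-shrinking with respect to $D$ would force, for every $x\in D$ and every increasing sequence $(s_n)$ with supremum $s$, that $r_{s_n}(x)=r_s(x)$ eventually — a condition vastly stronger than anything the hypotheses give you. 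The underlying error is that $\rho_{\widetilde\A}=\sup_{A\in\A}\rho_A$, and although each $\rho_A$ has $\limsup_n\rho_A(r_{s_n}(x),r_s(x))\le\varepsilon_A$, the order of $\sup_A$ and $\limsup_n$ cannot be interchanged: one can have $\limsup_n\rho_{A_m}\to 0$ for each $m$ while $\sup_m\rho_{A_m}(r_{s_n}(x),r_s(x))$ stays bounded away from $0$ for all $n$. This is precisely why the paper proves the implication (2)$\Rightarrow$(3) (passing from a single shrinking set to a countable family via the sets $\A_{n,m}$ built from the algebra generated by $\A$) and never attempts the reverse collapse; the cycle is closed by going through condition (1).

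There is a second, independent gap in your (2)$\Rightarrow$(1). You evaluate $h(x)(a)=a(x)$ and claim $h(x)\in c_0(\A)$ for $x\in D$, but $c_0$ requires $\{a\in\A\colon|a(x)|>\varepsilon\}$ to be \emph{finite}, while the $\A$-shrinking hypothesis controls only the differences $\rho_\A(r_{s_n}(x),r_s(x))$ — it says nothing about the absolute magnitudes $a(x)$. Your sketch (``finitely many retractions suffice to $\varepsilon/2$-approximate $x$, each leaving countably many coordinates exceeding $\varepsilon$'') both targets the wrong quantity and produces countability where finiteness is required. The paper instead first upgrades the skeleton data to a Valdivia embedding $\varphi:K\to[-1,1]^{\HH}$ with $\varphi[D(\mathfrak{s})]\subset\Sigma(\HH)$ using Theorem~\ref{thm:Intro3}, and then deploys a transfinite decomposition of the index set (Propositions~\ref{prop:semiEberleinFarmakiImproved} and~\ref{prop:decompImpliesSemiEberlein}, which require the model-theoretic retraction chain of Proposition~\ref{prop:rri}) to improve $\Sigma$ to $c_0$; this is the technical heart of the proof and is not replaced by the evaluation-map argument. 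Your outline of (1)$\Rightarrow$(3) is, by contrast, roughly in the right spirit (elementary submodels generating retractions that preserve $c_0$), though the paper actually factors it as (1)$\Rightarrow$(2) (Lemma~\ref{lem:semiEbImpliesShrinkingSkeleton}) followed by (2)$\Rightarrow$(3).
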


Finally, using Theorem~\ref{thm:Intro2} we provide new structural results for the class of semi-Eberlein compacta, answering in particular the second part of \cite[Question 6.6]{KL04} in positive. The most important new stability results are summarized below.
\begin{thmx}\label{thm:Intro4}
Let $K$ be a semi-Eberlein compact space.
\begin{itemize}
    \item If $L$ is an open continuous image of $K$ and it has densely many $G_\delta$-points, then $L$ is semi-Eberlein.
    \item If $K$ is moreover Corson and $L$ is a continuous image of $K$, then $L$ is semi-Eberlein.
\end{itemize}
\end{thmx}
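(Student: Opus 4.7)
The plan is to apply the retractional-skeleton characterization (3) of Theorem~\ref{thm:Intro2}. Starting from a semi-Eberlein witness $(\mathfrak{s}, D, \A)$ on $K$ -- a skeleton $\mathfrak{s} = (r_s)_{s\in\Gamma}$, a dense set $D\subset D(\mathfrak{s})$, and a countable family $\A\subset B_{\C(K)}$ satisfying (a)--(c) -- one aims at producing an analogous triple $(\tilde{\mathfrak{s}}, \tilde D, \tilde{\A})$ on $L$, from which semi-Eberleinness of $L$ is immediate.

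For the first item, let $\varphi\colon K\to L$ be a continuous open surjection. Using the suitable-model machinery that underlies the paper, I would construct an induced retractional skeleton $\tilde{\mathfrak{s}} = (\tilde r_t)_{t\in\tilde\Gamma}$ on $L$ satisfying the intertwining relation $\tilde r_t\circ\varphi = \varphi\circ r_{s(t)}$ along a cofinal map $s\colon \tilde\Gamma\to\Gamma$. Openness of $\varphi$ is what ensures that each $\tilde r_t$ is continuous, while density of $G_\delta$-points of $L$ is what guarantees that $\bigcup_t \tilde r_t[L]$ is dense in $L$, so that the axioms of a retractional skeleton genuinely hold for $\tilde{\mathfrak{s}}$. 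I then set $\tilde D := \varphi(D)$, which is dense in $L$ and contained in $D(\tilde{\mathfrak{s}})$. Writing $\Phi\colon \C(L)\to\C(K)$, $\Phi(g) = g\circ\varphi$, for the canonical isometric embedding, the intertwining relation yields
\[
\|\tilde r_t^*(g)-g\| \;=\; \|\Phi(\tilde r_t^*(g)-g)\| \;=\; \|r_{s(t)}^*(\Phi g)-\Phi g\|
\]
for every $g\in\C(L)$. Hence, taking $\tilde{\A} := \{\Phi^{-1}[A]\cap B_{\C(L)}\colon A\in\A\}$, shrinkingness of $\mathfrak{s}$ with respect to $A\in\A$ and $D$ transfers to shrinkingness of $\tilde{\mathfrak{s}}$ with respect to the corresponding element of $\tilde{\A}$ and $\tilde D$ with the very same $\varepsilon_A$, establishing (a); condition (b) is immediate from $\Phi$ being isometric and surjective onto its range $\Phi[\C(L)]\supseteq \Phi[B_{\C(L)}]$; finally, condition (c) is obtained by pushing up-directed limits through the continuous $\varphi$.

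For the second item, a classical theorem of Gul'ko asserts that continuous images of Corson compacta are Corson; in particular $L$ is Corson, so any retractional skeleton on $L$ automatically has $D(\tilde{\mathfrak{s}}) = L$, which removes the need for the $G_\delta$-density hypothesis used above. Running the same model-theoretic construction, now requiring only continuity (not openness) of $\varphi$, produces an induced skeleton $\tilde{\mathfrak{s}}$ on $L$ intertwining with $\mathfrak{s}$ via $\varphi$. The dense set $\tilde D := \varphi(D)$ and the family $\tilde{\A}$ defined as before then fulfil (a)--(c) of Theorem~\ref{thm:Intro2}(3) by exactly the same computation as in the first item.

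The main technical obstacle, common to both items, is the coordinated construction of $\tilde{\mathfrak{s}}$: the indexing semilattice $\tilde\Gamma$ must be $\sigma$-complete, cofinal enough to witness simultaneously $\varphi$ and all members of the countable family $\A$, and produce retractions satisfying the intertwining relation; in particular, one must ensure that each fibre $\varphi^{-1}(y)$ is $r_{s(t)}$-invariant, so that the formula $\tilde r_t(\varphi(x)) := \varphi(r_{s(t)}(x))$ gives a well-defined, continuous map on $L$. In the first item the further delicate point is combining openness of $\varphi$ with the $G_\delta$-density of $L$ to verify density of $\bigcup_t\tilde r_t[L]$; in the second item, Corsonness of $L$ eliminates this subtlety, so that only the transfer of the shrinking family $\A$ and of the limit-stays-in-$D$ condition remains.
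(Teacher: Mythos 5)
Your overall strategy -- reduce to the skeleton characterization of Theorem~\ref{thm:Intro2}(3), build coordinated skeletons on $K$ and $L$, transfer the countable shrinking family through $\varphi^* = \Phi$, and in the Corson case bypass the $G_\delta$ hypothesis -- does match the architecture of the paper's argument (Theorem~\ref{t:mainContImage} followed by Corollary~\ref{cor:firstContImage} and Corollary~\ref{cor:images}). The computation transferring $\A$ to $\tilde\A$ and the use of $\Phi$ being isometric is essentially what the paper does. However, there is a real gap at the one point you yourself flag as the ``main technical obstacle'': you never actually establish that $L$ admits a retractional skeleton whose induced set contains $\varphi[D]$, and the two sentences offered in its place are not arguments.

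Specifically, you propose to define $\tilde r_t(\varphi(x)) := \varphi(r_{s(t)}(x))$ and then claim that openness of $\varphi$ gives continuity of $\tilde r_t$ and that $G_\delta$-density of $L$ gives density of $\bigcup_t \tilde r_t[L]$. Neither claim is supported, and the first is not even the right statement: the difficulty with this formula is \emph{well-definedness} (fibre invariance of $r_{s(t)}$), which openness alone does not provide, and you leave it as an obstacle. The paper's suitable-model lemma that you would need (Lemma~\ref{l:contImageConnectionOfSkeletons}) does \emph{not} construct a skeleton on $L$ out of one on $K$; it takes as a \emph{hypothesis} that $\varphi[D]$ is already contained in a set induced by a retractional skeleton on $L$, and only then does the suitable-model machinery produce the intertwining $R_M \circ \varphi = \varphi \circ r_M$ for free. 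That hypothesis is exactly the crux, and the paper verifies it by invoking external results: for the open-map case, \cite[Lemma 6.1]{cuthSimul} shows $\varphi[S]$ (with $S = h^{-1}[\Sigma(J)] \supset D$) is induced by a retractional skeleton; for the Corson case, the paper observes that $K$ then has a full skeleton, so by uniqueness of the induced set $S = K$, hence the fibre-density condition \eqref{it:secondSufficientCondition} of Corollary~\ref{cor:firstContImage} holds trivially and \cite[Theorem 4.5]{cuthSimul} applies. Your appeal to Gul'ko's theorem (continuous images of Corson compacta are Corson) is a legitimate alternative for seeing that $L$ is Corson and hence has a full skeleton, but you would still need to \emph{coordinate} that skeleton on $L$ with the one on $K$ through $\varphi$ -- and that coordination is precisely what the paper gets from the suitable-model lemma under the hypothesis you have not discharged. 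In short: the transfer of $\A$, the computation, the use of isometry -- those parts are fine and match the paper. What is missing is the nontrivial input establishing that $\varphi[D]$ is contained in the induced set of some retractional skeleton on $L$; without it, the skeleton $\tilde{\mathfrak s}$ you manipulate has not been shown to exist.
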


As mentioned previously, many steps of the proofs of Theorem~\ref{thm:Intro1} and Theorem~\ref{thm:Intro2} are of independent interest and we believe those could be used when trying to characterize other subclasses of Valdivia compacta, which opens quite a wide area of potential further research. This is outlined in Section~\ref{sec:questions}.

Let us now briefly describe the content of each section, emphasizing the general steps mentioned above.

Section~\ref{sec:prelim} contains basic notations and some preliminary results.

In Section~\ref{sec:models} we consider retractions associated to (not necessary countable) suitable models. The most important outcome is Theorem~\ref{thm:canonicalRetraction}, where we summarize the properties of canonical retractions associated to suitable models. As an easy consequence, in Proposition~\ref{prop:rri} we show a very general method of obtaining a continuous chain of retractions on a compact space admitting a retractional skeleton. This part is essentially known as similar results were obtained e.g. in \cite[Lemma 2.5]{CGR17} (using other methods than suitable models), but our approach is in a certain sense much more flexible (most importantly, because it may be combined with other statements involving suitable models) and we actually use this flexibility later. As a corollary of our investigations we show in Theorem~\ref{thm:subskeletons} that we may in a certain way combine properties of countably many retractional skeletons.

In Section~\ref{sec:valdivia}, inspired by the proof of \cite[Theorem 2.6]{CGR17}, we aim at seeing as concretely as possible the ``Valdivia embedding'' of compact spaces with a commutative retractional skeleton. As a consequence we obtain the following result which might be thought of as the fourth main result of the whole paper. The most important part which we use later is the implication \ref{it:inducedByCommutative}$\Rightarrow$\ref{it:sigmaEmbedding}.

\begin{thmx}\label{thm:Intro3}
    Let $K$ be a compact space and $\mathfrak{s}=(r_s)_{s\in\Gamma}$ be a retractional skeleton on $K$. 
    Then the following conditions are equivalent.
    \begin{enumerate}[label = (\roman*)]  
     \newcounter{saveenum2} 

     \item\label{it:inducedByCommutative}$D(\mathfrak{s})$ is induced by a commutative retractional skeleton. 
    \item\label{it:subsetOfComutativeInducedSubset} There exists a subskeleton of $\mathfrak{s}$ which is commutative.
    
    \item\label{it:skeletonInvariantSubset} There exist a subskeleton $\mathfrak{s}_2 = (r_s)_{ s\in\Gamma'}$ of $\mathfrak{s}$ and a dense set $D\subset D(\mathfrak{s})$ such that for every up-directed set $\Gamma''\subset \Gamma'$ and every $x\in D$ we have $\lim_{s\in\Gamma''}r_s(x)\in D$.
    
    \setcounter{saveenum2}{\value{enumi}} 
    \end{enumerate}
 Moreover, if $\lambda \geq 1$ and $\A\subset \lambda B_{\C(K)}$ is a closed, symmetric and convex set separating the points of $K$ such that $f\circ r_s\in\A$, for every $f\in\A$ and $s\in\Gamma$, then those conditions are also equivalent to the following one.
 \begin{enumerate}[label = (\roman*)]
\setcounter{enumi}{\value{saveenum2}} 
 \item\label{it:sigmaEmbedding} There exists $\HH\subset \A$ such that the mapping $\varphi:K\to [-1,1]^\HH$ defined as $\varphi(x)(h):=\tfrac{h}{\lambda}(x)$, for every $h\in\HH$ and $x \in K$, is a homeomorphic embedding and $\varphi[D(\mathfrak{s})]\subset \Sigma(\HH)$.
 \end{enumerate}
\end{thmx}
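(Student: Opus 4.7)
The plan is to establish the cycle of equivalences (i)$\Leftrightarrow$(ii)$\Leftrightarrow$(iii), and then, under the extra hypothesis on $\A$, to prove (ii)$\Leftrightarrow$(iv).

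For \emph{(ii)$\Rightarrow$(iii)}, set $D:=D(\mathfrak{s}_2)$ where $\mathfrak{s}_2=(r_s)_{s\in\Gamma'}$ is the commutative subskeleton. For $x\in D$ with $r_{s_0}(x)=x$, commutativity yields $r_s(x)=r_s(r_{s_0}(x))=r_{s_0}(r_s(x))\in r_{s_0}(K)$ for every $s\in\Gamma'$; hence any limit along an up-directed $\Gamma''\subset\Gamma'$ lies in the closed set $r_{s_0}(K)\subset D$. For \emph{(i)$\Rightarrow$(ii)}, I apply Theorem~\ref{thm:subskeletons} to $\mathfrak{s}$ and a commutative skeleton $\mathfrak{s}'$ witnessing (i): the resulting common subskeleton is a subskeleton of $\mathfrak{s}$ and, as a subskeleton of $\mathfrak{s}'$, inherits commutativity. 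For \emph{(iii)$\Rightarrow$(i)}, I use the suitable-model machinery of Section~\ref{sec:models}: the canonical retractions $r_M$ associated with suitable countable elementary submodels pairwise commute by Theorem~\ref{thm:canonicalRetraction}, and under the invariance property of $D$ in (iii), models $M$ capturing $\mathfrak{s}_2$ and $D$ produce a commutative retractional skeleton on $K$ whose induced set equals $D(\mathfrak{s})$.

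For \emph{(ii)$\Rightarrow$(iv)} under the extra hypothesis, I fix a commutative subskeleton $\mathfrak{s}_2=(r_s)_{s\in\Gamma'}$. For each $s\in\Gamma'$, the compact metrizable space $r_s(K)$ admits, by a Lindel\"of argument on $r_s(K)^2\setminus\Delta$, a countable $\F_s\subset\A$ separating its points. The invariance $f\circ r_s\in\A$ together with the convexity and symmetry of $\A$ yields that the differences
\[
h_{s,t,f}:=\tfrac{1}{2}(f\circ r_t-f\circ r_s),\qquad s\le t \text{ in } \Gamma',\ f\in\F_t,
\]
and the anchors $f\circ r_s$ belong to $\A$. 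I would define $\HH\subset\A$ as a suitable combination of such functions, indexed via a cofinal family of countable elementary submodels of a rich structure (mirroring Section~\ref{sec:models}). Point-separation uses that for $x\ne y$ one has $r_s(x)\ne r_s(y)$ for $s$ large, and that $\F_s$ separates $r_s(K)$; for the $\Sigma$-condition, if $x\in D(\mathfrak{s})$ is fixed by $r_{s_0}$, commutativity gives $r_s(x)=x$ for all $s\ge s_0$, so $h_{s,t,f}(x)=0$ whenever $s_0\le s\le t$, and the submodel indexing restricts the remaining nonzero coordinates to a countable set.

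The crucial obstacle is the simultaneous cardinality and support control of $\HH$: it must have uncountably many elements to separate all of $K$, yet have countable support on $D(\mathfrak{s})$. A naive well-ordered cofinal chain in $\Gamma'$ fails when $\mathrm{cof}(\Gamma')>\omega_1$; the resolution is the submodel-based organization from Section~\ref{sec:models}, by which each $x\in D(\mathfrak{s})$ is ``seen'' by some countable model and has nontrivial coordinates only on countably many indices. Finally, for \emph{(iv)$\Rightarrow$(ii)}, the embedding $\varphi$ exhibits $K$ as a Valdivia compactum with $\Sigma$-subset containing $\varphi[D(\mathfrak{s})]$, producing by the classical characterization a commutative skeleton $\mathfrak{s}''$ on $K$; combining $\mathfrak{s}$ and $\mathfrak{s}''$ via Theorem~\ref{thm:subskeletons} gives a commutative common subskeleton of $\mathfrak{s}$.
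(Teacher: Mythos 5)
There is a genuine gap, and it occurs at the two implications that actually carry the weight of the theorem.

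First, in your sketch of (iii)$\Rightarrow$(i) you write that ``the canonical retractions $r_M$ associated with suitable countable elementary submodels pairwise commute by Theorem~\ref{thm:canonicalRetraction}.'' That is not what the theory gives you: Lemma~\ref{l:propertiesOfCanonicalRetractions}\eqref{it:commutativity} says $r_M\circ r_N = r_N\circ r_M = r_M$ only when $M\subset N$. For two incomparable models there is no reason for $r_M$ and $r_N$ to commute, and in general they do not --- that is precisely the distinction between a retractional skeleton and a commutative one. (Countable elementary submodels are not closed under intersection, so you cannot argue via $r_M\circ r_N = r_{M\cap N}$ either.) Consequently the family $(r_M)_{M\in\M}$ built from a cofinal, up-directed, $\sigma$-complete family of models is a retractional skeleton but not a commutative one, and your (iii)$\Rightarrow$(i) does not go through as stated. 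The paper instead proves (iii)$\Rightarrow$(iv) and then (iv)$\Rightarrow$(i), where (iv)$\Rightarrow$(i) is the easy step via Lemma~\ref{lem:existenceOfTheSkeleton} and \cite[Lemma 3.2]{C14}; obtaining (iii)$\Rightarrow$(i) directly, as you propose, would require a new argument that is not in the paper and that your outline does not supply.

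Second, your account of (ii)$\Rightarrow$(iv) correctly identifies the relevant ingredients (differences of functions, anchors, convexity/symmetry of $\A$, and the need for countable-support control), but it stops exactly where the work begins. The paper's proof proceeds by transfinite induction on $\w(K)$: Proposition~\ref{prop:rri} produces a long continuous chain $(r_\alpha)_{\alpha\le\kappa}$ of canonical retractions, one applies the induction hypothesis to each $r_{\alpha+1}[K]$, and then glues the resulting embeddings $\varphi_{\alpha+1}$ via the telescoping formula
\[
\varphi(x)(t)=\tfrac{1}{2}\bigl(\varphi_{\alpha+1}(r_{\alpha+1}(x))(t)-\varphi_{\alpha+1}(r_\alpha(x))(t)\bigr),\qquad t\in T_{\alpha+1},
\]
with injectivity supplied by \ref{it:ordinalSeparatingPoints} and the countable-support property by \ref{it:countableSupport2}. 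Your proposal gestures at ``a suitable combination of such functions, indexed via a cofinal family of countable elementary submodels'' and concedes that the hard part is ``the simultaneous cardinality and support control of $\HH$,'' but does not explain how to resolve it. Without the ordinal-indexed chain and the telescoping trick (or some genuine substitute), the argument is not a proof: the model-indexed $h_{s,t,f}$'s are too unstructured to enforce both point separation and $\Sigma$-membership simultaneously.

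A smaller point: Theorem~\ref{thm:subskeletons} produces a common \emph{weak} subskeleton, not a subskeleton, so your (i)$\Rightarrow$(ii) needs the extra step in the paper of passing to a cofinal $\Gamma''\subset\Gamma$ on which commutativity holds and then taking $(\Gamma'')_\sigma$, together with a brief check that commutativity persists under the $\sigma$-closure. Your (ii)$\Rightarrow$(iii) and (iv)$\Rightarrow$(ii) are essentially the paper's arguments and are fine.
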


Note that Theorem \ref{thm:Intro3} provides a characterization of Valdivia compacta, since a compact space is Valdivia if and only if it admits a commutative retractional skeleton. 

In Section~\ref{sec:semiEberlein} we prove (slightly more general versions of) Theorem~\ref{thm:Intro1} and Theorem~\ref{thm:Intro2}. Section~\ref{sec:contImages} is devoted to applications (in particular to the proof of Theorem~\ref{thm:Intro4}) and  Section~\ref{sec:questions} is devoted to open problems and remarks.

\section{Notation and preliminary results}\label{sec:prelim}

We use standard notations from topology and Banach space theory as can be found in \cite{Eng} and \cite{FHHMZ}.

For a set $I$, we define
\[\Sigma(I):=\{x\in \er^{I}\colon |\suppt(x)|\leq \omega\},
\]
where $\suppt(x)=\{i\in I\colon x(i)\neq 0\}$ is the \emph{support} of $x$. Given a subset $S$ of $I$ we denote the characteristic   function of $S$ by $1_S$.

All topological spaces are assumed to be Tychonoff. Let $T$ be a topological space.  A subset $S\subset T$ is said to be \emph{countably closed} if $\overline{C}\subset S$, for every countable subset $C\subset S$. We denote by $\w(T)$ the weight of $T$, by $\C(T,T)$ the set of continuous functions from $T$ to $T$ and by  $\beta T$ the \v{C}ech-Stone compactification of $T$. If $T$ is compact, then as usual $\C(T)$ denotes the Banach algebra of real-valued continuous functions defined on $T$, endowed with the supremum norm. Moreover, if $\A\subset \C(T)$, we denote by $\alg(\A)$ the algebraic hull of $\A$ in the algebra $\C(T)$.
Recall that a compact space $T$  is said to be \emph{Valdivia} if there is a homeomorphic embedding $h:T\to\er^I$ such that $h^{-1}[\Sigma(I)]$ is dense in $T$, we refer to \cite{K00} for a survey in this subject.

Let $(\Gamma,\leq)$ be an up-directed partially ordered set. We say that a sequence $(s_n)_{n\in\omega}$ of elements of $\Gamma$ is increasing if $s_n\leq s_{n+1}$, for every $n\in\omega$. We say that  $\Gamma$ is \emph{$\sigma$-complete} if for every increasing sequence $(s_n)_{n\in\omega}$ in $\Gamma$ there exists $\sup_n s_n$ in $\Gamma$. We say that $\Gamma'\subset\Gamma$ is \emph{cofinal} in $\Gamma$ if for every $s_0\in \Gamma$ there is $s\in\Gamma'$ with $s\geq s_0$. If $\Gamma$ is $\sigma$-complete and $A\subset \Gamma$, we denote by $A_{\sigma}$ the smallest $\sigma$-closed subset of $\Gamma$ containing $A$. Notice that, by \cite[Proposition 2.3]{K20}, if $A$ is up-directed, then $A_{\sigma}$ is up-directed.

\begin{defin}\label{def: r-skeleton}
Following \cite{CK15}, a \emph{retractional skeleton} in a countably compact space $K$ is a family of continuous retractions $\mathfrak s=(r_{s})_{s\in\Gamma}$ on $K$ indexed by an up-directed, $\sigma$-complete partially ordered set $\Gamma$, such that:
\begin{enumerate}[label = (\roman*)]
\item $r_{s}[K]$ is a metrizable compact space for each $s\in\Gamma$,
\item $s,t\in\Gamma$, $s\leq t$ then $r_{s}=r_t\circ r_s=r_s\circ r_t$,
\item given an increasing sequence $(s_n)_{n\in\omega}$ in $\Gamma$, if $s=\sup_{n\in\omega}s_{n}\in\Gamma$, then $r_{s}(x)=\lim_{n\to \infty}r_{s_{n}}(x)$, for every $x\in K$,
\item for every $x\in K$, $x=\lim_{s\in\Gamma}r_{s}(x)$.
\end{enumerate}
We say that $\bigcup_{s\in\Gamma}r_{s}[K]$ is the set \emph{induced} by the retractional skeleton $\mathfrak s$ and we denote it by $D(\mathfrak{s})$. We say that $\mathfrak{s}$ is \emph{commutative} if we have $r_s\circ r_t = r_t \circ r_s$ for every $s,t\in\Gamma$. We say that $\mathfrak{s}$ is \emph{full} if $D(\mathfrak{s}) = K$.
\end{defin}

The following preliminary result will be used in what follows quite frequently. It seems to be new even though it could be known to some experts as well.

\begin{lemma}\label{l: retraction from up-directed subset}
Let $K$ be a compact space. Suppose that $K$ has a retractional skeleton $\mathfrak{s}=(r_s)_{s\in \Gamma}$. Let $\Gamma^{'}\subset \Gamma$ be an up-directed subset, then the mapping $R_{\Gamma^{'}}:K \to K$ defined by $R_{\Gamma^{'}}(x)=\lim_{s\in \Gamma^{'}}r_{s}(x)$ is a continuous retraction and $R_{\Gamma^{'}}[K] = \overline{\bigcup_{s\in\Gamma'}r_s[K]}$.
Moreover, the following holds.
\begin{enumerate}[label = (\roman*)]
\item\label{it:countableUpDirected} If $\Gamma'$ is countable, then $s=\sup\Gamma'$ exists and we have $R_{\Gamma'} = r_s$.
\item\label{it:generalUpDirected} If $\M$ is an up-directed subset of $\mathcal{P}(\Gamma)$ such that each $M\in\M$ is up-directed. Then $\lim_{M\in\M}R_{M}(x) = R_{\bigcup \M}(x)$, $x\in K$.
\item\label{it:sigmaClosure} For every $s\in (\Gamma')_\sigma$ we have that $r_s[K]\subset R_{\Gamma'}[K]$ and $r_s\circ R_{\Gamma'} = R_{\Gamma'}\circ r_s$.
\item\label{it:skeletonOnTheSubspace} $(r_s|_{R_{\Gamma'}[K]})_{s\in(\Gamma')_\sigma}$ is a retractional skeleton on $R_{\Gamma'}[K]$ with induced set $D(\mathfrak{s})\cap R_{\Gamma'}[K]$.
\item\label{it:commutativeCase} If $\mathfrak{s}$ is commutative, then $D(\mathfrak{s})\cap R_{\Gamma'}[K] = R_{\Gamma'}[D(\mathfrak{s})]$.
\end{enumerate}
\end{lemma}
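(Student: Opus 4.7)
The plan is to prove (i) first (the countable case), then bootstrap to a general up-directed $\Gamma'$ via a uniqueness-of-cluster-point argument, and finally read off the remaining four properties essentially as corollaries. Before starting, I would record the basic identity from Definition~\ref{def: r-skeleton}(ii): for $s\leq t$ in $\Gamma$, the equation $r_s=r_t\circ r_s=r_s\circ r_t$ implies that $r_s[K]\subset r_t[K]$ (because $r_s(x)=r_t(r_s(x))$) and that every $y\in r_s[K]$ is fixed by $r_t$ (because $y=r_s(z)$ yields $r_t(y)=r_t(r_s(z))=r_s(z)=y$).

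For (i), given countable up-directed $\Gamma'=\{t_n\}_{n\in\omega}$, I would inductively pick $s_n\in\Gamma'$ with $s_n\geq s_{n-1},t_n$, so that $(s_n)$ is increasing and cofinal in $\Gamma'$; then $s:=\sup_n s_n\in\Gamma$ exists by $\sigma$-completeness and dominates every element of $\Gamma'$, and $r_{s_n}(x)\to r_s(x)$ by Definition~\ref{def: r-skeleton}(iii). The full net $(r_t(x))_{t\in\Gamma'}$ converges to $r_s(x)$ as well: any cluster point $y$, witnessed by a subnet $r_{t_\alpha}(x)\to y$, satisfies $r_t(r_{t_\alpha}(x))=r_t(x)$ eventually (since $t_\alpha\geq t$ eventually), hence $r_t(y)=r_t(x)$ for every $t\in\Gamma'$; specializing to $t=s_n$ and passing to the limit via Definition~\ref{def: r-skeleton}(iii) yields $r_s(y)=r_s(x)$; moreover $r_{t_\alpha}(x)\in r_{t_\alpha}[K]\subset r_s[K]$ and $r_s[K]$ is closed, so $y\in r_s[K]$ and therefore $y=r_s(y)=r_s(x)$. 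Thus $R_{\Gamma'}=r_s$.

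For general up-directed $\Gamma'$, I would first prove that $E:=\overline{\bigcup_{s\in\Gamma'}r_s[K]}$ is $r_s$-invariant for each $s\in\Gamma'$ (using up-directedness to pick $u\in\Gamma'$ with $u\geq s,t$ and the identity $r_u\circ r_s=r_s$ to get $r_s(r_t[K])\subset r_u[K]\subset E$, then continuity), and that $y=\lim_{s\in\Gamma'}r_s(y)$ for every $y\in E$. The second fact is immediate on the dense set $\bigcup_{s\in\Gamma'}r_s[K]$ (fixed points) and extends to $E$ by a careful exchange of limits combining continuity of the $r_s$, up-directedness of $\Gamma'$, and reduction to countable up-directed subfamilies covered by (i); I expect this step to be the main technical hurdle. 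Granted this, uniqueness of cluster points for $(r_s(x))_{s\in\Gamma'}$ follows as in (i): a cluster point $y$ lies in $E$, satisfies $r_s(y)=r_s(x)$ for $s\in\Gamma'$, and hence $y=\lim_{s\in\Gamma'}r_s(y)=\lim_{s\in\Gamma'}r_s(x)$ is uniquely determined. Compactness then forces $R_{\Gamma'}(x)$ to be this common value; continuity of $R_{\Gamma'}$ is obtained from a standard argument via joint continuity of the $r_s$ and compactness of $K$, and the identity $R_{\Gamma'}[K]=E$ is immediate because limits land in the closed set $E$ and points of $E$ are fixed by $R_{\Gamma'}$.

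The remaining items are almost corollaries. Item (ii) follows from the compatibility $R_M\circ R_{M'}=R_M$ whenever $M\subset M'$ are up-directed (a direct consequence of the identities). Item (iii) is proved by transfinite induction on the $\sigma$-closure construction of $(\Gamma')_\sigma$, each inductive step handled by Definition~\ref{def: r-skeleton}(iii); the commutation $r_s\circ R_{\Gamma'}=R_{\Gamma'}\circ r_s$ is checked first on the dense set $\bigcup_{t\in\Gamma'}r_t[K]$ (where it reduces to the basic identities) and then extended by continuity. Item (iv) follows by verifying the four skeleton axioms for $(r_s|_{R_{\Gamma'}[K]})_{s\in(\Gamma')_\sigma}$, with the induced set computed to be $D(\mathfrak{s})\cap R_{\Gamma'}[K]$. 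For (v), under commutativity of $\mathfrak{s}$, both inclusions are short: for $z\in D(\mathfrak{s})\cap R_{\Gamma'}[K]$, $z=R_{\Gamma'}(z)\in R_{\Gamma'}[D(\mathfrak{s})]$; conversely, for $z=R_{\Gamma'}(w)$ with $w=r_t(w)\in D(\mathfrak{s})$, commutativity of $R_{\Gamma'}$ with $r_t$ (inherited from $\mathfrak{s}$) gives $z=r_t(R_{\Gamma'}(w))=r_t(z)\in D(\mathfrak{s})$.
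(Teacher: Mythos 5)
Your overall shape — uniqueness of cluster points via reduction to countable up‑directed subfamilies covered by (i), then retraction identities, then transfinite induction on $(\Gamma')_\sigma$ for (iii) — is the same as the paper's, but there are two substantive gaps that you yourself half‑flag.

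First, the claim that $y=\lim_{s\in\Gamma'}r_s(y)$ for every $y\in E:=\overline{\bigcup_{s\in\Gamma'}r_s[K]}$ is not a harmless preliminary; you explicitly call it ``the main technical hurdle'' and then leave it as a ``careful exchange of limits.'' A double limit exchange of this kind is genuinely nontrivial: for $y_\beta\to y$ with $y_\beta\in r_{t_\beta}[K]$ the index $t_\beta$ may grow with $\beta$, so the fixed‑point argument for $y_\beta$ gives no uniform control. The cleanest way to get this fact is as a \emph{corollary} of continuity of $R_{\Gamma'}$ together with density of $\bigcup_{s\in\Gamma'}r_s[K]$ in $E$; placing it \emph{before} continuity inverts the natural order and leaves you needing an argument of at least the same difficulty as the continuity proof. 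Second, ``continuity of $R_{\Gamma'}$ is obtained from a standard argument via joint continuity of the $r_s$ and compactness'' does not work: there is no joint continuity (the family $(r_s)$ is merely a pointwise‑convergent net of continuous maps, and such nets need not have continuous limits on a compactum), and there is no standard off‑the‑shelf argument. The paper proves continuity by a recursive interleaving construction of two increasing sequences $(s_n)\subset\Gamma'$, $(\lambda_n)\subset\Lambda$, a diagonal choice of a cluster point $\tilde x$ of $(x_{\lambda_i})$, and $\sigma$-completeness (axiom (iii) of the skeleton) to derive a contradiction; none of that is visible in your sketch. Until this is supplied, the well‑definedness/continuity/``(b)'' cluster is unproved.

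Two smaller issues. In (iii) you verify $r_s\circ R_{\Gamma'}=R_{\Gamma'}\circ r_s$ on the set $\bigcup_{t\in\Gamma'}r_t[K]$ and ``extend by continuity,'' but that set is dense only in $E=R_{\Gamma'}[K]$, not in $K$, so the density argument yields the identity only on $E$; the paper instead computes $r_s(R_{\Gamma'}(x))=\lim_{t\ge s,\,t\in\Gamma'}r_s(r_t(x))=\lim_{t\ge s}r_t(r_s(x))=R_{\Gamma'}(r_s(x))$ directly for every $x\in K$, and then does transfinite induction for $s\in(\Gamma')_\sigma$. In (iv), the inclusion $D(\mathfrak{s}')\subset D(\mathfrak{s})\cap R_{\Gamma'}[K]$ is routine, but the reverse inclusion is not a formal consequence of the axioms; the paper needs the Fr\'{e}chet--Urysohn property of $D(\mathfrak{s})$ to pull a sequence $(s_n)\subset\Gamma'$ with $r_{s_n}(x)\to x$ out of the net and conclude $x\in D(\mathfrak{s}')$. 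Your ``the induced set computed to be \dots'' does not identify this step. Finally, for (ii) the identity $R_M\circ R_{M'}=R_M$ alone does not give $\lim_{M}R_M(x)=R_{\bigcup\M}(x)$ — the paper's cofinality argument (every $s_0\in\bigcup\M$ lies in some $M_0\in\M$ and then $\{s\in M:s\ge s_0\}$ is cofinal in every $M\supset M_0$) is what actually does the work.
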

\begin{proof}Let us start by proving that the mapping $R_{\Gamma'}$ is well-defined. In order to do that fix $x\in K$ and suppose that $(r_{s}(x))_{s\in \Gamma^{'}}$ is an infinite set (otherwise the assertion would be trivial). Since $K$ is compact, there exists a cluster point $x_1\in K$ for the net $(r_{s}(x))_{s\in \Gamma^{'}}$. Let us show that such a cluster point $x_1$ is unique. Indeed, let $x_{1}\neq x_{2}$ be two cluster points of $(r_{s}(x))_{s\in \Gamma^{'}}$. Let  $U_{1},U_{2}\subset K$ be two open subsets such that $x_{1}\in U_{1}$, $x_{2}\in U_{2}$ and $\overline{U_{1}}\cap \overline{U_{2}}=\emptyset$. Let $(s_n)_{n<\omega}, (t_{n})_{n<\omega}\subset \Gamma'$ be two increasing sequences of indexes
such that $s_n\leq t_n\leq s_{n+1}$, $r_{s_{n}}(x)\in U_{1},$ and $r_{t_{n}}(x)\in U_{2}$, for every $n\in \omega$. Since $\Gamma$ is $\sigma$-complete, we have that $\sup_{n\in\omega}s_{n}=\sup_{n\in\omega}t_{n}=s\in \Gamma$. Then $r_{s}(x)\in \overline{U_{1}}\cap \overline{U_{2}}$, a contradiction. Therefore $R_{\Gamma'}$ is well-defined.\\
The map $R_{\Gamma^{'}}$ is continuous. Indeed, let $(x_{\lambda})_{\lambda\in\Lambda}$ be a net converging to $x\in K$. Up to taking a subnet we may assume without loss of generality that $R_{\Gamma^{'}}(x_{\lambda})$ converges to $y$. Suppose by contradiction $y\neq R_{\Gamma^{'}}(x)$, then there are two open subsets $U,V\subset K$ with $y\in U$ and $ R_{\Gamma^{'}}(x)\in V$, such that $\overline{U}\cap\overline{V}=\emptyset$. We find recursively two increasing sequences of indexes $(s_n)_{n<\omega}$ in $\Gamma^{'}$ and $(\lambda_n)_{n<\omega}$ in $\Lambda$ such that $r_{s_k}(x_{\lambda_i})\in V \mbox{ if } i\geq k$ and $r_{s_k}(x_{\lambda_i})\in U \mbox{ if } i< k$.\\
Let us sketch the recursion here. Since $R_{\Gamma^{'}}(x_{\lambda})\to y$, there exists $\lambda_0\in\Lambda$ such that $R_{\Gamma^{'}}(x_{\lambda})\in U$ for every $\lambda\geq \lambda_0$.  Since $r_s(x)\stackrel{\Gamma'}{\to}R_{\Gamma' (x)}$, there exists $s_0\in \Gamma^{'}$ such that $r_{t}(x)\in V$ for every $t\geq s_0$. Since $r_s(x_{\lambda_0})\stackrel{\Gamma'}{\to}R_{\Gamma^{'}}(x_{\lambda_0})\in U$, there exists $s_1\geq s_0$ such that $r_t(x_{\lambda_0})\in U$ for every $t\geq s_1$. By the continuity of $r_{s_1}$, we have $r_{s_1}(x_{\lambda})\to r_{s_1}(x)\in V$; hence there exists $\lambda_1\geq\lambda_0$ such that $r_{s_1}(x_{\lambda})\in V$ for every $\lambda\geq\lambda_1$. We proceed recursively in an obvious way.\\
Since $\Gamma$ is $\sigma$-complete, $s=\sup_{k\in \omega}s_k$ belongs to $\Gamma$. Hence $r_{s_k}(x_{\lambda_i})$ converges to $r_{s}(x_{\lambda_i})\in\overline{U}$ for every $i\in \omega$. Moreover, by compactness we have $\bigcap_{k\in\omega}\overline{(x_{\lambda_i})_{i\geq k}}\neq \emptyset$, so we may pick $\tilde{x}\in\bigcap_{k\in\omega}\overline{(x_{\lambda_i})_{i\geq k}}$. 
We observe that $r_{s_k}(\tilde{x})\in \overline{V}$ for every $k\in\omega$, hence $r_s(\tilde{x})\in \overline{V}$. On the other hand $r_{s_k}(x_{\lambda_i})\to r_s(x_{\lambda_i})\in \overline{U}$ for every $i\in\omega$; therefore $r_s(\tilde{x})\in\overline{U}$, a contradiction. Thus, $R_{\Gamma'}$ is continuous.\\ 
Let us check that $R_{\Gamma'}$ is a retraction. Indeed, pick $x\in K$. Then
\begin{equation*}
\begin{split}
R_{\Gamma^{'}}(R_{\Gamma^{'}}(x))&=\lim_{t\in\Gamma^{'}}r_t(\lim_{s\in\Gamma^{'}}r_s(x))
=\lim_{t\in\Gamma^{'}}\lim_{s\in\Gamma^{'}}r_t(r_s(x))\\
&=\lim_{t\in\Gamma'}\lim_{s\in\Gamma', s\geq t}r_t(r_s(x))
=\lim_{t\in\Gamma^{'}}r_t(x)=R_{\Gamma^{'}}(x).
\end{split}
\end{equation*}
Finally, for every $s\in\Gamma'$ and $x\in r_s[K]$ we have $R_{\Gamma'}(x)=\lim_{t\in\Gamma', t\geq s} r_t(r_{s}(x))=x$ so we obtain $\overline{\bigcup_{s\in\Gamma'}r_{s}[K]}\subset R_{\Gamma'}[K]$ and the other inclusion follows from the definition of $R_{\Gamma'}$.\\
It remains to prove the ``Moreover'' part. We first observe (see the proof of \cite[Proposition 2.3]{K20} for more details) that $(\Gamma')_{\sigma}$ is directed, $\sigma$-closed and $(\Gamma')_{\sigma}=\bigcup_{\alpha<\omega_1}B_{\alpha}$, where
\begin{itemize}
    \item $B_0=\Gamma'$;
    \item $B_{\alpha+1}=B_{\alpha}\cup \{\sup t_n : \; (t_n) \mbox{ is an increasing sequence in $B_{\alpha}$}\};$
    \item $B_{\lambda}=\bigcup_{\alpha<\lambda}B_{\alpha}$, if $\lambda<\omega_1$ is a limit ordinal.
\end{itemize}
\noindent\ref{it:countableUpDirected}: If $\Gamma'$ is countable, then we can find an increasing sequence $(s_n)_{n\in\omega}$ from $\Gamma$ with $\sup_n s_n = s = \sup \Gamma'$. Then, using that the sequence $\{s_n\colon n\in\omega\}$ is cofinal in $\Gamma'$, we obtain $R_{\Gamma'} = R_{\{s_n\colon n\in\omega\}} = r_s$.\\
\ref{it:generalUpDirected}: Suppose that $\M\subset \mathcal{P}(\Gamma)$ is up-directed and that each $M\in\M$ is up-directed. Put $M_\infty:=\bigcup_{M\in \M} M$, fix $x\in K$ and open set $U$ such that $R_{M_{\infty}}(x) \in U$. Let $V$ be an open neighborhood of $R_{M_{\infty}}(x)$ such that $\overline{V} \subset U$. Then, there exists $s_0 \in M_{\infty}$ such that $r_s(x) \in V$, for every $s\in M_\infty$ with $s\geq s_0$.
By the definition of $M_{\infty}$, there exists $M_0\in \M$ such that $s_0 \in M_0$. If $M \in \M$ and  $M_0 \subset M$, then $s_0 \in M$. This implies that the set $\{s\in M\colon s\geq s_0\}$ is cofinal in $M$ and so we have 
\[R_M(x)=\lim_{s \in M}r_s(x)= \lim_{s \in M, s \ge s_0}r_s(x)\in \overline{V} \subset U.\]
This shows that $\lim_{M\in\M} R_M(x) = R_{M_\infty}(x)$.\\
\ref{it:sigmaClosure}:  We prove inductively that for every $\alpha<\omega_1$ and $s\in B_\alpha$, it holds that $r_s[K]\subset R_{\Gamma'}[K]$ and $r_s\circ R_{\Gamma'} = R_{\Gamma'}\circ r_s$. Pick $s\in B_0 = \Gamma'$. Then $r_s[K]\subset R_{\Gamma'}[K]$, since $R_{\Gamma^{'}}[K] = \overline{\bigcup_{s\in\Gamma'}r_s[K]}$. Moreover, for $x\in K$ we have 
\[
r_s\big(R_{\Gamma'}(x)\big)=\lim_{t \in \Gamma', t \ge s} r_s\big(r_t(x)\big)=\lim_{t \in \Gamma', t \ge s} r_t\big(r_s(x)\big)=R_{\Gamma'}\big(r_s(x)\big).\]
Now, fix $\alpha<\omega_1$ and suppose that the result holds for every $\gamma<\alpha$. If $\alpha$ is a limit ordinal, then it follows easily from the induction hypothesis that the result also holds for $\alpha$. Suppose that $\alpha=\gamma+1$. Let $s\in B_{\alpha}$, $x\in r_{s}[K]$ and $(s_n)_{n\in\omega}\subset B_{\gamma}$ such that $\sup s_n=s$. By the induction hypothesis, we have that $R_{\Gamma'}(r_{s_n}(x))=r_{s_n}(x)$, for every $n \in \omega$ and therefore:
\[R_{\Gamma'}(x)=\lim_{n\in\omega}R_{\Gamma'}(r_{s_n}( x))=\lim_{n\in\omega}r_{s_n}(x)=r_s(x)=x.\]
With a similar argument, we also conclude that $r_s\circ R_{\Gamma'} = R_{\Gamma'}\circ r_s$.\\
\ref{it:skeletonOnTheSubspace}: First, we \emph{claim} that for every $x\in R_{\Gamma'}[K]$ we have $\lim_{s\in (\Gamma')_\sigma} r_s(x) = x$. Indeed, since $(\Gamma')_{\sigma}$ is up-directed, it holds that $R_{(\Gamma')_{\sigma}}[K]=\overline{\bigcup_{s \in (\Gamma')_{\sigma}}r_s[K]}$, which implies that $R_{\Gamma'}[K] \subset R_{(\Gamma')_{\sigma}}[K]$ and therefore if $x \in  R_{\Gamma'}[K]$, then $x=R_{(\Gamma')_\sigma}(x)=\lim_{s\in (\Gamma')_\sigma} r_s(x)$.

Using \ref{it:sigmaClosure} and the previous claim, it is easy to see that $\mathfrak{s'}:=(r_s|_{R_{\Gamma'}[K]})_ {s\in(\Gamma')_\sigma}$ is a retractional skeleton on $R_{\Gamma'}[K]$ with $D(\mathfrak{s'}) = \bigcup_{s\in(\Gamma')_\sigma} r_s[R_{\Gamma'}[K]]\subset D(\mathfrak{s})\cap R_{\Gamma'}[K]$. On the other hand, since $D(\mathfrak{s})$ is Fr\'echet-Urysohn (see \cite[Theorem 32]{kubis09}), for every $x\in D(\mathfrak{s})\cap R_{\Gamma'}[K]$ there is a sequence $(s_n)_{n\in\omega}$ in $\Gamma'$ with $r_{s_n}(x)\to x$ and therefore $x\in D(\mathfrak{s'})$, because $D(\mathfrak{s'})$ is a countably closed set. Thus, we have that $D(\mathfrak{s'}) = D(\mathfrak{s})\cap R_{\Gamma'}[K]$.\\
\ref{it:commutativeCase}: If $(r_s)_{s\in\Gamma}$ is commutative, then for every $s\in\Gamma$ and $x\in K$ we have
\[R_{\Gamma'}(r_s(x)) = \lim_{t\in\Gamma'} r_t(r_s(x)) = r_s(\lim_{t\in\Gamma'} r_t(x))\in D(\mathfrak{s}),\]
which implies $R_{\Gamma'}[D(\mathfrak{s})]\subset D(\mathfrak{s})$ and so $R_{\Gamma'}[D(\mathfrak{s})] = D(\mathfrak{s})\cap R_{\Gamma'}[K]$.
\end{proof}

\section{Retractions associated to suitable models}\label{sec:models}

The most important results concerning projectional skeletons were originally proved in \cite{kubis09} using the so-called ``method of suitable countable models'' which replaces inductive constructions by ``suitable countable models''. The presentation of this method was further simplified in \cite{C12} and later it was also used in the context of spaces admitting retractional skeletons, see e.g. \cite{C14} or \cite{CK15}. Here we further generalize and deeply investigate this method. The main difference of our approach is that we do not consider only countable models. The main outcome of this section is that for every (not necessarily countable) suitable model we can define a canonical retraction associated to this model. Those canonical retractions will be deeply used in the remainder of the paper.

Properties of retractions associated to suitable models are summarized in Theorem~\ref{thm:canonicalRetraction} and, consequently, in Proposition~\ref{prop:rri} we obtain a continuous chain of retractions associated to suitable models with very pleasant properties. As an example of an application we show in Theorem~\ref{thm:subskeletons} that we may in a certain way combine properties of countably many retractional skeletons.

\subsection{Preliminaries}

Here we settle the notation and give some basic observations concerning suitable models. We refer the interested reader to \cite{C12} and \cite{CK15}, where more details about this method may be found (warning: in \cite{C12, CK15} only \textbf{countable} models were considered, while here we consider suitable models which are not necessarily countable).

Any formula in the set theory can be written using symbols $\in,=,\wedge,\vee,\neg,\rightarrow,\leftrightarrow,\exists,(,),[,]$ and symbols for variables. On the other hand, it would be very laborious and pointless to use only the basic language of the set theory. For example, we often write $x < y$ and we know, that in fact this is a shortcut for a formula $\varphi(x,y,<)$ with all free variables shown. Thus, in what follows we will use this extended language of the set theory as we are used to, having in mind that the formulas we work with are actually sequences of symbols from the list mentioned above.

Let $N$ be a fixed set and $\phi$ be a formula. Then the {\em relativization of $\phi$ to $N$} is the formula $\phi^N$ which is obtained from $\phi$ by replacing each quantifier of the form ``$\exists x$'' by ``$\exists x\in N$'' (and if we extend our language of  set theory by the symbol ``$\forall$'' then we replace also each quantifier of the form ``$\forall x$'' by ``$\forall x\in N$'').

If $\phi(x_1,\ldots,x_n)$ is a formula with all free variables shown, then {\em $\phi$ is absolute for $N$} if
\[
\forall a_1,\ldots,a_n\in N\quad (\phi^N(a_1,\ldots,a_n) \leftrightarrow \phi(a_1,\ldots,a_n)).
\]

\begin{defin}
\rm Let $\Phi$ be a finite list of formulas and $X$ be any set.
Let $M \supset X$ be a set such that each $\phi$ from $\Phi$ is absolute for $M$.
Then we say that $M$ \emph{is a suitable model for $\Phi$ containing $X$}.
This is denoted by $M \prec (\Phi; X)$.
\end{defin}

Note that suitable models do exist.

  \begin{thm}[see Theorem IV.7.8 in \cite{kunen}]\label{thm:modelExists}
 Let $\Phi$ be a finite list of formulas and $X$ be any set. Then there exists a set $R$ such that $R\prec (\Phi; X)$ and $|R|\leq \max(\omega,|X|))$ and moreover, for every countable set $Z\subset R$ there exists $M\subset R$ such that $M\prec(\Phi;\; Z)$ and $M$ is countable.
\end{thm}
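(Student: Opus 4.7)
The approach I would take is the classical downward Löwenheim--Skolem--Tarski construction via Skolem functions. First, I would replace $\Phi$ by the finite list $\Phi^{*}$ obtained by closing $\Phi$ under taking subformulas (treating $\forall v\,\psi$ as $\neg\exists v\,\neg\psi$). For each existential subformula $\exists v\,\psi(v,\bar u)$ of $\Phi^{*}$, with $\bar u=(u_{1},\dots,u_{n_{\psi}})$ its free variables, I would use the Axiom of Choice to fix, once and for all in the ambient universe, a \emph{Skolem function} $F_{\psi}$ which, given $\bar a$, returns a witness $F_{\psi}(\bar a)$ for $\exists v\,\psi(v,\bar a)$ whenever such a witness exists (and is defined arbitrarily otherwise).

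Next I would build $R$ by a countable recursion: set $R_{0}:=X$; let $R_{k+1}$ be $R_{k}$ together with all values $F_{\psi}(\bar a)$ for $\bar a\in R_{k}^{n_{\psi}}$ and $\psi$ ranging over the finitely many existential subformulas of $\Phi^{*}$; and put $R:=\bigcup_{k<\omega}R_{k}$. Since $\Phi^{*}$ is finite and only finite arities are involved, each step adds at most $\max(\omega,|R_{k}|)$ new elements, so $|R|\le\max(\omega,|X|)$. That $R\prec(\Phi;X)$ then follows from the Tarski--Vaught criterion, proved by induction on the complexity of formulas in $\Phi^{*}$: atomic formulas and Boolean connectives are immediate, and for an existential formula the closure of $R$ under $F_{\psi}$ yields a witness in $R$ whenever one exists in the universe, giving absoluteness.

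For the moreover part, given a countable $Z\subset R$, I would run the same Skolem closure \emph{inside} $R$: put $M_{0}:=Z$, iteratively adjoin the values $F_{\psi}(\bar a)$ for $\bar a\in M_{k}^{n_{\psi}}$, and let $M:=\bigcup_{k<\omega}M_{k}$. Because $R$ is already closed under each $F_{\psi}$, we automatically have $M\subset R$; countability of $M$ follows from countability of $Z$ and the fact that only finitely many Skolem functions of finite arity are used at each level. The same Tarski--Vaught verification yields $M\prec(\Phi;Z)$.

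The argument is routine and can be read off \cite[Theorem IV.7.8]{kunen}. The only genuinely delicate point is a bookkeeping one: the expansion $\Phi^{*}$ and the family $\{F_{\psi}\}$ must be chosen once, globally, \emph{before} the construction of $R$, so that when one later repeats the Skolem closure starting from $Z\subset R$, one automatically lands inside the previously built $R$. Both parts of the theorem then fall out of the same construction applied with different starting sets.
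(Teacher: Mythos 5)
Your overall strategy (Skolem hull plus Tarski--Vaught) is the right one and is close to what the cited Theorem~IV.7.8 of Kunen does, but there is a genuine gap in how you set up the Skolem functions. You propose to ``use the Axiom of Choice to fix, once and for all in the ambient universe, a Skolem function $F_\psi$'' for each existential subformula. The domain of such an $F_\psi$ would be $V^{n_\psi}$, a proper class, and the class $\{v : \psi(v,\bar a)\}$ of potential witnesses can itself be a proper class (take $\psi(v,\bar u)\equiv(v=v)$). The Axiom of Choice in ZFC yields choice functions on \emph{sets}, not on proper classes; a globally chosen Skolem function on $V$ would require Global Choice, which is not a theorem of ZFC. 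So the very object you single out as the ``only genuinely delicate point'' --- the family $\{F_\psi\}$ ``chosen once, globally, before the construction of $R$'' --- need not exist, and the argument as written does not go through.

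The standard repair, and the one Kunen actually uses, is to insert one step before the Skolem hull: by the Reflection Theorem, fix an ordinal $\alpha$ with $X\subseteq V_\alpha$ such that every formula of $\Phi^{*}$ is absolute between $V_\alpha$ and $V$. Now Skolem functions $F_\psi : V_\alpha^{\,n_\psi}\to V_\alpha$ can be chosen by the ordinary Axiom of Choice applied to the set $V_\alpha$, your closure construction runs inside $V_\alpha$ to produce $R$ with $|R|\le\max(\omega,|X|)$, and the Tarski--Vaught argument gives absoluteness of $\Phi^{*}$ between $R$ and $V_\alpha$; transitivity of absoluteness then yields $R\prec(\Phi;X)$. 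With this modification your treatment of the ``moreover'' clause goes through verbatim: closing a countable $Z\subseteq R$ under the same $F_\psi$'s stays inside $R$ (which is already closed under them) and produces a countable $M\prec(\Phi;Z)$. (One can instead avoid Reflection by ranking witnesses and applying AC set-by-set at each stage of the closure, and then, for the ``moreover'' part, choosing Skolem functions on the already-built set $R$; but the Reflection route is cleaner and matches the cited source.)
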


The fact that certain formula is absolute for $M$ will always be used exclusively in order to satisfy the assumption of the following lemma. Using this lemma we can force the model $M$ to contain all the needed objects created (uniquely) from elements of $M$. We give here the well-known proof for the convenience of the reader.

\begin{lemma}\label{l:unique-M}
Let $\phi(y,x_1,\ldots,x_n)$ be a formula with all free variables shown and let $M$ be a set that is absolute for $\phi$ and for $\exists y \phi(y,x_1, \ldots, x_n)$. If $a_1, \ldots, a_n\in M$ are such that there exists a set $u$ satisfying $\phi(u, a_1, \ldots, a_n)$, then there exists a set $v \in M$ satisfying $\phi(v, a_1, \ldots, a_n)$. Moreover, if there exists a unique set $u$ such that $\phi(u, a_1, \ldots, a_n)$, then $u \in M$.
\end{lemma}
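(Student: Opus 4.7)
The plan is to invoke the definition of absoluteness twice: once to secure a witness inside $M$, and once to transfer satisfaction back from $M$ to the universe. From the hypothesis that some $u$ satisfies $\phi(u, a_1, \ldots, a_n)$, the sentence $\exists y\, \phi(y, a_1, \ldots, a_n)$ is simply true in $V$. Since $M$ is absolute for the formula $\exists y\, \phi(y, x_1, \ldots, x_n)$ and the parameters $a_1, \ldots, a_n$ all lie in $M$, the relativization $(\exists y\, \phi(y, a_1, \ldots, a_n))^M$ also holds, which by definition of relativization unfolds to: there exists $v \in M$ with $\phi^M(v, a_1, \ldots, a_n)$. Now apply the second absoluteness hypothesis, that of $\phi$ itself, to this particular $v$: since $v, a_1, \ldots, a_n$ are all in $M$, absoluteness yields $\phi(v, a_1, \ldots, a_n) \leftrightarrow \phi^M(v, a_1, \ldots, a_n)$, so $\phi(v, a_1, \ldots, a_n)$ holds, which is exactly what was to be shown.

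The ``moreover'' part is then immediate: if the set $u$ satisfying $\phi(u, a_1, \ldots, a_n)$ is unique, then the element $v \in M$ produced above must coincide with $u$, and hence $u \in M$.

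The argument is pure definition-chasing and there is no genuine obstacle. The only subtlety worth flagging is bookkeeping: the proof requires absoluteness of two distinct formulas, namely $\phi$ \emph{and} $\exists y\, \phi$. Assuming absoluteness of $\phi$ alone would not suffice, since one first needs the existential quantifier to be reflected into $M$ before absoluteness of $\phi$ can be applied to the resulting witness. This is why the hypothesis of the lemma explicitly lists both formulas, and it also explains why, in later applications, one must be careful to include enough formulas in the finite list $\Phi$ when invoking Theorem~\ref{thm:modelExists}.
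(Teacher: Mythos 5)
Your proof is correct and follows exactly the paper's argument: apply absoluteness of $\exists y\,\phi$ to get a witness $v\in M$ with $\phi^M(v,\vec a)$, then absoluteness of $\phi$ to transfer this to $\phi(v,\vec a)$, and uniqueness forces $u=v$. The closing remark about why both formulas must be absolute is a helpful observation but does not change the substance.
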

\begin{proof}
It follows from the absoluteness of the formula $\exists y \phi(y,x_1, \ldots, x_n)$, that there exists $v \in M$ such that $\phi^M(v, a_1, \ldots, a_n)$. Therefore the absoluteness of the formula $\phi(y,x_1, \ldots, x_n)$ implies that $\phi(v, a_1, \ldots, a_n)$ holds. Moreover, if $u$ is the only set such that $\phi(u, a_1, \ldots, a_n)$, then $v=u$ and thus $u \in M$.
\end{proof}

\begin{convention}
Whenever we say ``\emph{for any suitable model $M$ (the following holds \dots)}''
we mean that  ``\emph{there exists a finite list of formulas $\Phi$ and a countable set $Y$ such that for every $M \prec (\Phi;Y)$ (the following holds \dots)}''.
\end{convention}

If $M$ is a suitable model and $\langle X,\tau\rangle$ is a topological space (or is $\langle X,d\rangle$ a metric space or is $\langle X,+,\cdot,\|\cdot\|\rangle$ a normed linear space) then we say that \emph{$M$ contains $X$} if $\langle X,\tau\rangle\in M$, $\langle X,d\rangle\in M$ and $\langle X,+,\cdot,\|\cdot\|\rangle\in M$, respectively.

The following summarizes certain easy observations. For the proofs we refer the reader to \cite[Sections 2 and 3]{C12}, where it is assumed that $M$ is countable but this fact is not used in proofs. 

\begin{lemma}\label{l:basics-in-M}
For any suitable model $M$ the following holds:
\begin{enumerate}
    \item \label{number-sets} $\mathbb{Q}, \omega, \er \in M$ and $M$ contains the usual operations and relations on $\er$.
    \item\label{it: Dom-Rng-in-M} For every function $f\in M$ we have $\dom f\in M$, $\rng f\in M$ and $f[M\cap \dom f]\subset M$.
    \item \label{finite-in-M} For every finite set $A$ we have $A\in M$ if and only if $A\subset M$.
    \item \label{belongIsContained}For every countable set $A\in M$ we have $A\subset M$. Moreover, if $\kappa\in M$ is a cardinal and $\kappa\subset M$ then for every $A\in M$ with $|A|\leq \kappa$ we have $A\subset M$.
    \item For every natural number $n>0$ and sets $a_1,\ldots,a_n$ we have $\{a_1,\ldots,a_n\}\subset M$ if and only if $\langle a_1,\ldots,a_n\rangle \in M$.
    \item \label{operations-sets-in-M} If $A,B\in M$, then $A\cap B\in M$, $B\setminus A\in M$ and $A\cup B\in M$.
    \item \label{linearSubspaceM}If $M$ contains a normed linear space $X$, then $\overline{X\cap M}$ is a linear subspace of $X$.
\end{enumerate}
\end{lemma}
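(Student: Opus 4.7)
The strategy throughout is to expand the finite list of formulas $\Phi$ so that each claim becomes an instance of Lemma \ref{l:unique-M}, and to enlarge the countable initial set $Y$ so that it contains the relevant ``prescribed'' elements (in particular $\omega\subset Y$ and $\qe\subset Y$). Since the statement is quantified ``for any suitable model'', we may fix $\Phi$ and $Y$ once and for all, and every assertion then follows by absoluteness combined with the observation that the relevant object is the \emph{unique} witness to a simple formula.

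For (1), I would add to $\Phi$ formulas uniquely defining $\qe$, $\omega$, $\er$ together with those defining the graphs of $+$, $\cdot$ and $\leq$ on $\er$; Lemma \ref{l:unique-M} then puts each of them in $M$. For (2), the formulas $\forall x(x\in y\leftrightarrow \exists z\,\langle x,z\rangle\in f)$ and $\forall x(x\in y\leftrightarrow \exists z\,\langle z,x\rangle\in f)$ uniquely define $\dom f$ and $\rng f$ from $f\in M$; and for $x\in M\cap\dom f$, the value $f(x)$ is the unique $y$ with $\langle x,y\rangle\in f$, whence $f(x)\in M$. For (3), the converse uses $\forall x(x\in y\leftrightarrow x=a_1\vee\cdots\vee x=a_n)$, while the forward direction proceeds by induction on $|A|$, at each step picking out a uniquely specified element of $A$. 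For (4), if $A\in M$ is countable, Lemma \ref{l:unique-M} applied to ``there exists a surjection from $\omega$ onto $A$'' yields $f\in M$ with $\dom f=\omega$; since $\omega\subset Y\subset M$, item (2) gives $A=\rng f\subset M$. The ``moreover'' clause is identical with $\kappa$ in place of $\omega$, using the hypothesis $\kappa\subset M$ directly.

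Item (5) follows from Kuratowski pairing together with (6), and (6) is a direct application of Lemma \ref{l:unique-M} to the defining formulas for $\cap$, $\setminus$ and $\cup$. For (7), once $\qe\subset Y\subset M$ is arranged, item (2) shows that $X\cap M$ is closed under $+$ and under scalar multiplication by rationals; density of $\qe$ in $\er$ together with the continuity of the vector-space operations then yields that $\overline{X\cap M}$ is a real linear subspace of $X$.

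The only genuine subtlety is item (4): its proof requires $\omega\subset M$, which does not follow from $\omega\in M$ by absoluteness alone, so we are forced to enlarge the countable initial set $Y$ so that $\omega\subset Y$ (and similarly $\qe\subset Y$, for (7)). Once this bookkeeping is settled, every step reduces to a routine application of Lemma \ref{l:unique-M}, and the entire argument amounts to being disciplined about which absoluteness formulas are pre-declared in $\Phi$ and which distinguished elements are pre-declared in $Y$.
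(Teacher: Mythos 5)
The paper does not prove this lemma; it defers to \cite[Sections 2 and 3]{C12}, remarking only that the countability of $M$ assumed there is not actually used. Your approach --- fixing a finite list $\Phi$ of absoluteness formulas and a countable seed set $Y$ once and for all, then producing every required object inside $M$ via Lemma~\ref{l:unique-M} --- is precisely the mechanism this section of the paper is built around, and you correctly isolate the one genuinely nontrivial point: item \eqref{belongIsContained} needs $\omega\subset M$ (not merely $\omega\in M$), which you arrange by taking $\omega\subset Y$, and analogously $\qe\subset Y$ for item \eqref{linearSubspaceM}.

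Two small repairs are in order. In item \eqref{finite-in-M}, the formula $\forall x\,(x\in y\leftrightarrow x=a_1\vee\cdots\vee x=a_n)$ is an $n$-indexed schema, whereas $\Phi$ must be finite; the fix is to place only the empty-set, singleton and binary-union formulas in $\Phi$ and argue by meta-induction on $|A|$, exactly as you already do for the other direction of that item, and similarly for the $n$-tuple case in item (5) via nested Kuratowski pairs. Also, ``picking out a uniquely specified element of $A$'' overstates what is available --- a finite set has no canonical element --- but this is harmless: the first conclusion of Lemma~\ref{l:unique-M} only requires existence of a witness, not uniqueness, so ``some element of $A$ lies in $M$'' suffices and the recursion $A\mapsto A\setminus\{a\}$ goes through. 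With these cosmetic adjustments the argument is correct and is, as far as the paper's reference indicates, the standard one.
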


Some more easy observations are summarized in the following. 

\begin{lemma}\label{l:basics-in-M-2}
For any suitable model $M$ the following holds:
\begin{enumerate}
    \item\label{it:upDirected} If $(\Gamma,\leq)$ is up-directed and $(\Gamma,\leq)\in M$, then $\Gamma\cap M$ is up-directed.
    \item\label{it:composition} If $f,g\in M$ are functions and $f\circ g$ is well-defined, then $f\circ g\in M$.
    \item \label{it:inverse-in-M} If $f \in M$ is a function which is one-to-one then $f^{-1}\in M$.
    \item\label{it:one-to-one-imageOfM} If $f\in M$ is a function and $X\in M$ is a subset of $\dom f$, then $f[M\cap X] = M\cap f[X]$.
    \item\label{it:exp-in-M} If $A$ and $B$ are sets and $A, B \in M$, then $B^A \in M$ and $A\times B \in M$.
    \item \label{it:Pi-in-M} For every set $I\in M$ and $X\subset \er^I$ with $X\in M$ we have $\pi\in M$, where $\pi:I\to {\er^X}$ is the mapping given for $i\in I$ and $x\in X$ as $\pi(i)(x):=x(i)$.
    \item\label{it:supportSubsetOfM} Let $X\subset \Sigma(I)$ be such that $I\in M$. Then $\suppt(x)\subset M$ for every $x\in X\cap M$.
    \item \label{it:containsspacecontinuousfunctions}If $(X,\tau)$ is a topological space with $\{X,\tau\}\subset M$, then $\{\C(X),+,\cdot,\otimes\}\subset M$ (where $\cdot$ is a multiplication by real numbers and $\otimes$ pointwise multiplication of functions). Moreover, if $X$ is a compact space then $M$ contains the normed linear space $\C(X)$, $\overline{\C(X)\cap M}$ is a closed subalgebra of $\C(X)$ and $1\in\C(X)\cap M$.  
    \item\label{it:algebraDenseInFunctionsFromM} If $(K,\tau)$ is a compact space, $\A\subset \C(K)$ separates the points of $K$ and  $\{K,\tau,\A\}\subset M$, then $\overline{\alg((\A\cup \{1\})\cap M)} = \overline{\C(K)\cap M}$.
    \item\label{it:metrizableSubspace} If $(K,\tau)$ is a compact space , $K'\subset K$ is closed and metrizable with $\{K',\tau,K\}\in M$ then $\C(K)\cap M$ separates the points of $K'$ and $K'\subset \overline{K'\cap M}$.
    \item\label{it:norm} If $(K,\tau)$ is a compact space, $D\subset K$ a dense subset with $\{K,D,\tau\}\subset M$ and $f\in\C(K)\cap M$, then $\|f\| = \|f|_{D\cap M}\|$
\end{enumerate}
\end{lemma}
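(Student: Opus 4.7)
The plan is to fold, into the finite list $\Phi$ underlying the convention on suitable models, a formula for each item of the lemma that guarantees the absoluteness we need; most items then reduce to Lemma \ref{l:unique-M} applied to objects uniquely defined by data from $M$, together with the basic facts collected in Lemma \ref{l:basics-in-M}. The Banach space items \textup{(8)--(11)} additionally require the Stone--Weierstrass theorem for \textup{(9)} and separability together with Tietze extension for \textup{(10)}, combined with absoluteness of the relevant existential statements. The main obstacle will be item \textup{(9)}, where one must control the representation of an element of $\alg(\A \cup \{1\}) \cap M$ by finitely many generators all lying in $M$.

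For \textup{(1)}, I would include in $\Phi$ the formula $\exists c \in \Gamma \, (a \leq c \wedge b \leq c)$; absoluteness transfers its truth from $V$ to $M$ when $a,b \in \Gamma \cap M$. For \textup{(2), (3), (5)}, and \textup{(6)}, each of $f \circ g$, $f^{-1}$, $B^A$, $A \times B$, and the evaluation map $\pi$ is uniquely determined by parameters in $M$, so Lemma \ref{l:unique-M} places them in $M$. For \textup{(4)}, the inclusion $f[M \cap X] \subset M \cap f[X]$ is immediate from Lemma \ref{l:basics-in-M}\textup{(2)}, whereas the reverse uses absoluteness of the formula $\exists x \in X \, (f(x) = y)$. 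For \textup{(7)}, the set $\suppt(x)$ is uniquely defined from $x$ and $I$, hence $\suppt(x) \in M$ by Lemma \ref{l:unique-M}; being countable, it is contained in $M$ by Lemma \ref{l:basics-in-M}\textup{(4)}.

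For \textup{(8)}, all of $\C(X)$ and its operations $+$, $\cdot$, $\otimes$ are uniquely determined by $(X,\tau)$, as is the supremum norm when $X$ is compact, so Lemma \ref{l:unique-M} yields $\{\C(X),+,\cdot,\otimes\} \subset M$ and, in the compact case, places the normed linear space structure in $M$. Then $\overline{\C(X) \cap M}$ is a closed linear subspace by Lemma \ref{l:basics-in-M}\textup{(7)}; since $f \otimes g \in M$ whenever $f,g \in M$ (Lemma \ref{l:unique-M} applied to the pointwise product), it is closed under multiplication by continuity, and the constant function $1$, being uniquely defined, lies in $M$.

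For \textup{(9)}, the inclusion $\overline{\alg((\A \cup \{1\}) \cap M)} \subset \overline{\C(K) \cap M}$ is immediate since $\C(K) \cap M$ is closed under sums and products. For the reverse, given $f \in \C(K) \cap M$ and rational $\varepsilon > 0$, apply absoluteness of the formula $\exists g \in \alg(\A \cup \{1\}) \, \|f - g\| < \varepsilon$ (which is true by Stone--Weierstrass since $\A$ separates points and $1 \in \alg(\A \cup \{1\})$) to obtain some $g \in \alg(\A \cup \{1\}) \cap M$. The delicate step is to show that such a $g$ already lies in $\alg((\A \cup \{1\}) \cap M)$: apply absoluteness of the formula asserting the existence of a finite sequence $(a_1,\ldots,a_n) \in (\A \cup \{1\})^n$ and a polynomial $p$ with real coefficients satisfying $g = p(a_1,\ldots,a_n)$; since finite sequences in $M$ have all their entries in $M$ by Lemma \ref{l:basics-in-M}\textup{(5)}, each $a_i$ lies in $(\A \cup \{1\}) \cap M$, so $g \in \alg((\A \cup \{1\}) \cap M)$. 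For \textup{(10)}, since $K'$ is compact metrizable, $\C(K')$ is separable and $K'$ has a countable dense subset; absoluteness produces a countable dense subfamily of $\C(K')$ in $M$ (automatically contained in $M$ by Lemma \ref{l:basics-in-M}\textup{(4)}) together with a countable dense subset of $K'$ in $M$, and Tietze extension combined with a further application of absoluteness lifts each of the countably many functions to an element of $\C(K) \cap M$ with the prescribed restriction, yielding both point separation on $K'$ and $K' \subset \overline{K' \cap M}$. Item \textup{(11)} follows directly from absoluteness of $\exists x \in D \, (|f(x)| > \|f\| - \varepsilon)$ applied for every rational $\varepsilon > 0$.
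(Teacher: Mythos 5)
Your proposal is correct and, for items (1)--(8) and (11), it runs along essentially the same lines as the paper's proof: write a suitable formula for each item, fold it into $\Phi$, and combine Lemma~\ref{l:unique-M} with the facts from Lemma~\ref{l:basics-in-M}. The two places where you take a genuinely different route are items (9) and (10). For (9) you first extract, for each rational $\varepsilon$, an approximant $g_\varepsilon\in\alg(\A\cup\{1\})\cap M$ and then use a second absoluteness step (existence of a finite tuple from $\A\cup\{1\}$ and a polynomial representing $g_\varepsilon$) together with Lemma~\ref{l:basics-in-M}(5) to place the generators in $M$. The paper instead uses a single formula asserting the existence of a \emph{countable} $A\subset\A$ with $f\in\overline{\alg(A\cup\{1\})}$ and then Lemma~\ref{l:basics-in-M}(4); this is a one-shot argument and avoids quantifying over rational $\varepsilon$, but both are valid. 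For (10) you reflect separability of both $\C(K')$ and $K'$ into $M$ and use Tietze extension to lift a countable dense family of $\C(K')\cap M$ to $\C(K)\cap M$; the paper reflects a countable separating family in $\C(K)$ directly and then derives density of $K'\cap M$ via Stone--Weierstrass and the induced open basis $\{f^{-1}(-1/2,1/2)\cap K'\}$ plus a per-$f$ absoluteness step. Your route is slightly more direct for the density of $K'\cap M$ at the cost of an extra prerequisite (that the subspace topology of $K'$ lies in $M$ so that $\C(K')\in M$ by item (8)), which you should record explicitly via Lemma~\ref{l:unique-M}; similarly in (11) you should note that $\|f\|\in M$ by item (8) so that the formula you quote is legitimate. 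These are small omissions, not gaps: the argument is sound.
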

\begin{proof}
Let $S$ and $\Phi$ be the countable set and the list of formulas from the statement of Lemma~\ref{l:basics-in-M}, where $\Phi$ is enriched by formulas (and their subformulas) marked by $(*)$ in the proof below. Let $M\prec (\Phi; S)$. Then $M$ satisfies \eqref{it:upDirected}, \eqref{it:composition}, \eqref{it:inverse-in-M}, \eqref{it:exp-in-M}, and \eqref{it:Pi-in-M}. Indeed those items follow easily using Lemma~\ref{l:unique-M} and the absoluteness of the following formulas (and their subformulas)
\[
\forall u,v \in \Gamma\,\exists w\in\Gamma\, w\geq u,v,\eqno{(*)}
\]
\[
\exists h \quad (h= f\circ g).\eqno{(*)}
\]
\[
\exists h \quad (h=f^{-1}).\eqno{(*)}
\]
\[
\exists W \quad (W = B^A).\eqno{(*)}
\]
\[
\exists W \quad (W = B\times A).\eqno{(*)}
\]
\[
\exists \pi\in ({\er^X})^{I} \quad (\forall i\in I \ \forall x\in X: \pi(i)(x) = x(i)).\eqno{(*)}
\]

\eqref{it:one-to-one-imageOfM}: By Lemma~\ref{l:basics-in-M} \eqref{it: Dom-Rng-in-M}, we have that $f[M\cap \dom f]\subset M$ so in particular $f[M\cap X]\subset M\cap f[X]$. For the other inclusion pick $x\in f[X]\cap M$. Using Lemma~\ref{l:unique-M} and the absoluteness of the following formula (and its subformulas)
\[
\exists y\in X\quad  (f(y)=x),\eqno{(*)}
\]
there exists $y\in M\cap X$ with $f(y)=x$ and so $x\in f[M\cap X]$.\\
\eqref{it:supportSubsetOfM}: Pick $x\in X\cap M$. Using Lemma~\ref{l:unique-M} and absoluteness of the following formula (and its subformulas)
\[
\exists D\subset I\quad  (i\in D \Leftrightarrow x(i)\neq 0),\eqno{(*)}
\]
we obtain that $\suppt(x)\in M$. Since $\suppt(x)$ is a countable set, by Lemma~\ref{l:basics-in-M} \eqref{belongIsContained} we obtain that $\suppt(x)\subset M$.

\eqref{it:containsspacecontinuousfunctions}: Using Lemma~\ref{l:unique-M} and absoluteness of the following formulas (and their subformulas) 
\[
\exists \C(X)\in \er^X (\forall f\in \er^X: f\in \C(X)\Leftrightarrow f \text{ is continuous}),\eqno{(*)}
\]
\[
\exists +\in \C(X)^{\C(X)\times \C(X)} (\forall f, g\in \C(X)\;\forall x\in X: + (f,g)(x) = f(x) + g(x)),\eqno{(*)}
\]
\[
\exists \cdot\in \C(X)^{\er\times \C(X)} (\forall \alpha\in\er \forall f\in \C(X)\;\forall x\in X: \cdot (\alpha,f)(x) = \alpha f(x)),\eqno{(*)}
\]
\[
\exists \otimes\in \C(X)^{\C(X)\times \C(X)} (\forall f, g\in \C(X)\;\forall x\in X: \otimes (f,g)(x) = f(x)g(x)),\eqno{(*)}
\]
we obtain that $\C(X)\in M$ and that $\{+,\cdot,\otimes\}\subset M$. Morevoer, if $X$ is a compact space, then using Lemma~\ref{l:unique-M} and the absoluteness of the following formula (and its subformulas)
\[
\exists \|\cdot\|_\infty\in \er^{\C(X)}\quad (\forall f\in\C(X):\; \|\cdot\|(f) = \sup_{x\in X} |f(x)|),\eqno{(*)}
\]
we obtain that $M$ contains the normed linear space $\C(X)$. Thus, by Lemma~\ref{l:basics-in-M} \eqref{linearSubspaceM}, $\overline{\C(X)\cap M}$ is a closed subspace of $\C(X)$ and, since $\otimes\in M$, $\C(X)\cap M$ is closed under multiplication and therefore $\overline{\C(X)\cap M}$ is closed under multiplication as well. Finally, using Lemma~\ref{l:unique-M} and absoluteness of the following formula (and its subformulas)
\[\exists f\in\C(X)\quad (\forall x\in X\;f(x)=1),\eqno{(*)}\]
we obtain that $1\in\C(X)\cap M$.\\
\eqref{it:algebraDenseInFunctionsFromM}: By \eqref{it:containsspacecontinuousfunctions}, $\overline{\C(K)\cap M}$ is a closed subalgebra of $C(K)$ that contains $(\A\cup \{1\})\cap M$, so we have $\overline{\alg((\A\cup \{1\})\cap M)} \subset \overline{\C(K)\cap M}$. For the other inclusion, pick $f\in\C(K)\cap M$. By Lemma~\ref{l:unique-M} and absoluteness of the following formula (and its subformulas)
\[
\exists A\subset \A\; \quad(A\text{ is countable and }f\in \overline{\operatorname{alg}(A\cup \{1\})}),\eqno{(*)}
\]
there is a countable set $A\subset \A$ with $A\in M$ and $f\in \overline{\operatorname{alg}(A\cup \{1\})}$. By Lemma~\ref{l:basics-in-M} \eqref{belongIsContained}, we have that $A\subset \A\cap M$. Therefore, using that $1\in M$, we obtain
\[\overline{\alg((\A\cup \{1\})\cap M)} = \overline{\operatorname{alg}((\A\cap M)\cup \{1\})} \supset \overline{\C(K)\cap M}.\]
\eqref{it:metrizableSubspace}: By \eqref{it:containsspacecontinuousfunctions}, Lemma~\ref{l:unique-M} and the absoluteness of the following formula (and its subformulas)
\[
\exists A\subset \C(K)\quad (A\text{ is countable and separates the points of $K'$}),\eqno{(*)}
\]
there is a countable set $A\subset \C(K)$ with $A\in M$ which separates the points of $K'$. By Lemma~\ref{l:basics-in-M} \eqref{belongIsContained}, we have that $A\subset \C(K)\cap M$ so $\C(K)\cap M$ separates the points of $K'$. Therefore, since by \eqref{it:containsspacecontinuousfunctions} the set $\overline{\C(K)\cap M}$ is a closed algebra containing constant functions, Stone-Weierstrass theorem ensures that the set $\{f|_{K'}\colon f\in \overline{\C(K)\cap M}\}$ is dense in $\C(K')$, which implies that $\{f|_{K'}\colon f\in \C(K)\cap M\}$ is dense in $\C(K')$ and therefore $\{f^{-1}(-1/2,1/2)\cap K'\colon f\in \C(K)\cap M\}$ is an open basis of $K'$. Moreover, for every $f\in\C(K)\cap M$ using Lemma~\ref{l:unique-M} and the absoluteness of the following formula (and its subformulas)
\[
\exists x\in f^{-1}(-1/2,1/2)\cap K',\eqno{(*)}
\]
we have that $f^{-1}(-1/2,1/2)\cap (K'\cap M)\neq \emptyset$ for every $f\in \C(K)\cap M$ and therefore the set $K'\cap M$ is dense in $K'$.\\
\eqref{it:norm}: Since $D\subset K$ is a dense set, we have that $\|f\| = \|f|_D\|$. It follows from \eqref{it:containsspacecontinuousfunctions} that $\|\cdot\|\in M$ and so $\|f\|\in M$. Therefore, using Lemma~\ref{l:unique-M} and the absoluteness of the following formula (and its subformulas) 
\[
\forall n\in\omega\;\exists x\in D\quad (\|f\|-1/n<|f(x)|<\|f\|+1/n),\eqno{(*)}
\]
we obtain that for every $n \in \omega$, there exists $x_n\in D\cap M$ such that $|f(x_n)|\to \|f\|$. 
\end{proof}

\subsection{Retractions associated to suitable models} Here we show that in a compact space with a retractional skeleton, for every suitable model there is a canonical retraction associated to it (see Definition~\ref{def:canonicalRetraction}). The main outcome of this subsection is Theorem~\ref{thm:canonicalRetraction}, where the properties of a canonical retraction are summarized.

Lemma~\ref{l:abstraction} and Lemma~\ref{l:retraction} are inspired by \cite[Lemma 4.7]{CK15}, where something similar was proved for suitable models which are countable.

\begin{lemma}\label{l:abstraction}
For every suitable model $M$ the following holds: Let $X$ be a set and $\A\subset \er^X$ such that $\{X\}\cup \A\subset M$. Consider the mapping $q_M:X\to \er^\A$ defined for $x\in X$ as $q_M(x)(f):=f(x)$, $f\in\A$. Then for every $B\subset X$ with $B\in M$ we have $q_M[B]\subset \overline{q_M[B\cap M]}$.
\end{lemma}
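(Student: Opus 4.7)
The plan is to verify directly that every basic open neighborhood of $q_M(x)$ in the product topology on $\er^\A$ meets $q_M[B\cap M]$, for each $x\in B$. Fix such an $x$ and a basic open neighborhood $U$ of $q_M(x)$; by the definition of the product topology, $U$ is determined by finitely many $f_1,\ldots,f_n\in\A$ and some $\varepsilon>0$, consisting of all $g\in\er^\A$ with $|g(f_i)-f_i(x)|<\varepsilon$ for every $i$. It then suffices to produce $y\in B\cap M$ satisfying $|f_i(y)-f_i(x)|<\varepsilon$ for each $i$.

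The main obstacle is that $x$ need not belong to $M$, so the real numbers $f_i(x)$ need not be in $M$ and cannot serve as parameters in any formula to which we would apply Lemma~\ref{l:unique-M}. To circumvent this, I would first replace these values by rational approximations: pick $r_1,\ldots,r_n\in\qe$ with $|r_i-f_i(x)|<\varepsilon/2$ and $\delta\in\qe$ with $0<\delta<\varepsilon/2$. By Lemma~\ref{l:basics-in-M}\eqref{number-sets}, each $r_i$ and $\delta$ lies in $M$, and by hypothesis each $f_i\in\A\subset M$. Hence by Lemma~\ref{l:basics-in-M}\eqref{finite-in-M}, the finite sets $F:=\{f_1,\ldots,f_n\}$ and $R:=\{\langle f_i,r_i\rangle : 1\leq i\leq n\}$ both belong to $M$; and $B\in M$ by assumption.

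To conclude, I would enlarge the list of formulas $\Phi$ (governing ``suitable model'') by the formula with free variables $B,F,R,\delta$,
\[
\exists y\,\bigl(y\in B\wedge \forall f\in F\;|f(y)-R(f)|<\delta\bigr),
\]
together with its relevant subformulas. Since this formula holds in $V$ (witnessed by $y=x$), Lemma~\ref{l:unique-M} yields $y\in M$ with $y\in B$ and $|f_i(y)-r_i|<\delta$ for every $i$. The triangle inequality then gives
\[
|f_i(y)-f_i(x)|\leq|f_i(y)-r_i|+|r_i-f_i(x)|<\delta+\varepsilon/2<\varepsilon,
\]
so that $q_M(y)\in U\cap q_M[B\cap M]$, which establishes the desired inclusion $q_M[B]\subset\overline{q_M[B\cap M]}$.
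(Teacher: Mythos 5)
Your proof is correct and follows essentially the same route as the paper's: in both, one passes to rational approximate data, packages this finite data into elements of $M$, and applies Lemma~\ref{l:unique-M} to a formula asserting the existence of a point of $B$ with those approximate values. The only divergence is cosmetic --- you encode the data as finite sets $F$ and $R$ and invoke Lemma~\ref{l:basics-in-M}\eqref{finite-in-M} directly, whereas the paper codes the rational intervals as a function $\xi\in\B^{<\omega}$ and uses the countability of $\B^{<\omega}$ to place $\xi$ in $M$ --- but the underlying mechanism and absoluteness argument are the same.
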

\begin{proof}
In this proof we will use the identification of any $n\in\omega$ with the set $\{0, ..., n-1\}$. Further, denote by $\B$ the set of all the open intervals with rational endpoints and by $\B^{<\omega}$ the set of all the functions whose domain is some $n\in\omega$ and whose values are in $\B$.

Let $S$ be the countable set from the statement of Lemma~\ref{l:basics-in-M} enriched by $\{\B,\B^{<\omega}\}$ and let $\Phi$ be the list of formulas from the statement of Lemma~\ref{l:basics-in-M} enriched by formulas (and their subformulas) marked by $(*)$ in the proof below. Let $M\prec (\Phi; S\cup\{X\}\cup \A)$.

Fix $B\subset X$ with $B\in M$, a point $x\in B$ and a basic neighborhood of a point $q_M(x)$; that is, let us pick finitely many functions $F\subset \A$ and a sequence of rational intervals such that $f(x)\in I_f$, $f\in F$ and consider the neighborhood 
\[
	N:= \{y\in\er^{\A}\setsep y(f)\in I_f\text{ for every }f\in F\}.
\]
By Lemma~\ref{l:basics-in-M} \eqref{finite-in-M}, we have that $F\in M$ and by absoluteness of the formula
\[
\exists n\in\omega \exists\eta\quad (\eta\text{ is a bijection between $n$ and $F$}),\eqno{(*)}
\]
and its subformulas, there is $n\in\omega$ and a bijection $\eta\in M$ between $n$ and $F$. 

Let us further define the mapping $\xi:n\to\B$ by $\xi(i) =I_{\eta(i)}$. Since $\xi\in \B^{<\omega}\in M$, it follows from Lemma~\ref{l:basics-in-M} \eqref{belongIsContained} that $\xi\in M$.
By Lemma~\ref{l:unique-M} and the absoluteness of the formula (and its subformulas)
\[
\exists x\in B \quad(\forall i<n: \eta(i)(x)\in \xi(i)),\eqno{(*)}
\]
there is a point $x_0\in B\cap M$ such that $q_M(x_0)\in N$; hence, $q_M[B\cap M]$ is dense in $q_M[B]$.
\end{proof}

\begin{lemma}\label{l:retraction}For every suitable model $M$ the following holds: Let $(K, \tau)$ be a compact space and $D\subset K$ be a dense subset with $\{K,D,\tau\}\subset M$.

If $\C(K)\cap M$ separates the points of $\overline{D\cap M}$, then there exists a unique retraction $r_M:K\to \overline{D\cap M}$ such that $f = f\circ r_M$, for every $f\in\C(K)\cap M$.

Moreover, in this case
\begin{enumerate}
    \item for every $x\in K$ and $\A\subset \C(K)$ separating the points of $K$ with $\A\in M$, $r_M(x)$ is the unique point from $\overline{D\cap M}$ satisfying $f(r_M(x)) = f(x)$, for every $f\in \A\cap M$. 
    \item\label{it: imageofclosure} if $B\subset D$ and $B\in M$, then $r_M[\overline{B}] = \overline{B\cap M}$.
\end{enumerate}
\end{lemma}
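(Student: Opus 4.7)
The plan is to define $r_M(x)$, for each $x\in K$, as the unique point $d\in\overline{D\cap M}$ satisfying $f(d)=f(x)$ for every $f\in\C(K)\cap M$. Existence of such a $d$ will be obtained by applying Lemma~\ref{l:abstraction} with $X=K$, $\A=\C(K)$ (which lies in $M$ by Lemma~\ref{l:basics-in-M-2}\eqref{it:containsspacecontinuousfunctions}) and $B=D$: this yields $q_M[D]\subset\overline{q_M[D\cap M]}$ for the evaluation map $q_M\colon K\to\er^{\C(K)}$. Density of $D$ in $K$ together with continuity of $q_M$ then give $q_M(x)\in\overline{q_M[D\cap M]}$, and since $q_M[\overline{D\cap M}]$ is compact, hence closed in $\er^{\C(K)}$, we conclude that $q_M(x)\in q_M[\overline{D\cap M}]$. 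Uniqueness of $d$ is immediate from the assumption that $\C(K)\cap M$ separates the points of $\overline{D\cap M}$.

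Once $r_M$ is defined, the retraction property $r_M|_{\overline{D\cap M}}=\mathrm{id}$ follows directly from uniqueness of $d$, and continuity of $r_M$ reduces to a standard subnet argument: any cluster point of $(r_M(x_\alpha))_\alpha$ for a net $x_\alpha\to x$ lies in $\overline{D\cap M}$ and agrees with $x$ on every $f\in\C(K)\cap M$, so must equal $r_M(x)$ by uniqueness, forcing convergence of the whole net.

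For item~(1) I would use Lemma~\ref{l:basics-in-M-2}\eqref{it:algebraDenseInFunctionsFromM}: the set of $g\in\C(K)$ with $g(d_1)=g(d_2)$ is closed under sums, products, and scalar multiples and contains the constant function $1$, so if $d_1,d_2\in\overline{D\cap M}$ agree on $\A\cap M$ they agree on $\alg((\A\cup\{1\})\cap M)$, hence by uniform density on $\overline{\C(K)\cap M}$, and the separation hypothesis forces $d_1=d_2$; the fact that $r_M(x)$ itself satisfies the condition is trivial since $\A\cap M\subset \C(K)\cap M$. For item~(2) I would apply Lemma~\ref{l:abstraction} a second time, with $\A=\C(K)$ but $B$ (which is in $M$) in place of $D$, obtaining $q_M[B]\subset\overline{q_M[B\cap M]}$; continuity of $q_M$ extends this to every $y\in\overline{B}$, and closedness of $q_M[\overline{B\cap M}]$ in $\er^{\C(K)}$ produces a $d\in\overline{B\cap M}$ with $q_M(d)=q_M(y)$, which the already-established uniqueness identifies with $r_M(y)$. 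The reverse inclusion $\overline{B\cap M}\subset r_M[\overline{B}]$ is immediate since $\overline{B\cap M}\subset\overline{B}$ and $r_M$ fixes those points.

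The only mildly delicate point — not really a serious obstacle — is the interplay between $\C(K)$, which is the correct object to feed to Lemma~\ref{l:abstraction} because it belongs to $M$, and $\C(K)\cap M$, where the separation hypothesis lives and which governs the defining condition of $r_M$; the two are reconciled by noting that any $d$ produced via $q_M$ automatically satisfies $f(d)=f(x)$ for every $f\in\C(K)$, in particular for every $f\in\C(K)\cap M$.
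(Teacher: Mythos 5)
Your overall strategy is the same as the paper's: define $r_M(x)$ as the unique point of $\overline{D\cap M}$ agreeing with $x$ on the functions in $\C(K)\cap M$, obtain existence from Lemma~\ref{l:abstraction}, check continuity by a subnet argument, prove (1) via the algebra density of Lemma~\ref{l:basics-in-M-2}\eqref{it:algebraDenseInFunctionsFromM}, and prove (2) by a second application of Lemma~\ref{l:abstraction}. However, there is a genuine gap in the way you invoke Lemma~\ref{l:abstraction}, and the last paragraph of your proposal shows it is a conceptual misreading rather than a typo.

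Lemma~\ref{l:abstraction} requires $\{X\}\cup\A\subset M$, i.e.\ $X\in M$ \emph{and} $\A\subset M$: every element of $\A$ must lie in $M$. You take $\A=\C(K)$ and justify it by Lemma~\ref{l:basics-in-M-2}\eqref{it:containsspacecontinuousfunctions}, which gives $\C(K)\in M$ but not $\C(K)\subset M$; the latter fails for any countable model (and for any model of cardinality smaller than $|\C(K)|$). This is not cosmetic. If you form $q_M\colon K\to\er^{\C(K)}$ using \emph{all} of $\C(K)$, then $q_M$ is a homeomorphic embedding of $K$, so the claimed inclusion $q_M[D]\subset\overline{q_M[D\cap M]}$ would force $D\subset\overline{D\cap M}$ — false in general — and your existence argument would end by asserting $x\in\overline{D\cap M}$ for every $x\in K$, i.e.\ $K=\overline{D\cap M}$, which is absurd. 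The correct choice, and the one the paper makes, is $\A=\C(K)\cap M$, which is automatically a subset of $M$, with the evaluation map landing in $\er^{\C(K)\cap M}$; then Lemma~\ref{l:abstraction} applies, the defining condition of $r_M$ only involves functions in $\C(K)\cap M$ exactly as you want, and the remainder of your argument (density plus compactness for existence, uniqueness from the separation hypothesis, the subnet argument for continuity, the $\alg$-density argument for item (1), and the second application of Lemma~\ref{l:abstraction} for item (2) — again with $\C(K)\cap M$ in place of $\C(K)$) goes through exactly as you describe and matches the paper's proof.
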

\begin{proof} Let $S$ and $\Phi$ be the union of sets and the lists of formulas from the statements of Lemma~\ref{l:basics-in-M}, Lemma~\ref{l:basics-in-M-2} and Lemma~\ref{l:abstraction}. Let $M\prec (\Phi; S\cup\{K,D,\tau\})$ be such that $\C(K)\cap M$ separates the points of $\overline{D\cap M}$. By Lemma~\ref{l:basics-in-M-2} \eqref{it:containsspacecontinuousfunctions}, we have that $\C(K)\in M$.

Let us consider the mapping $q_M:K\to \er^{\C(K)\cap M}$ given by $q_M(x) = (f(x))_{f\in\C(K)\cap M}$, $x\in K$. Then $q_M$ is continuous and, by the assumption, $q_M|_{\overline{D\cap M}}$ is one-to-one; hence, $q_M|_{\overline{D\cap M}}$ is a homeomorphic embedding. Moreover, whenever $B\subset D$ is such that $B\in M$, then by Lemma~\ref{l:abstraction} we have $\overline{q_M[B\cap M]}\supset q_M[B]$ which implies $q_M[\overline{B}] = q_M[\overline{B\cap M}]$.

Now, put $r_M:= (q_M|_{\overline{D\cap M}})^{-1}\circ q_M$. Then it is a continuous retraction with $r_M[K] = \overline{D\cap M}$. Moreover, for every $x\in K$
\[
r_M(x) = (q_M|_{\overline{D\cap M}})^{-1}\circ q_M(x) = y,
\]
where $y\in K$ is the unique point such that $y\in\overline{D\cap M}$ and $g(y) = g(x)$, for every $g\in \C(K)\cap M$. Hence, for $f\in \C(K)\cap M$ we have
\[
f(r_M(x)) = f(y) = f(x).
\]

In order to see that $r_M$ is unique, let us consider another retraction $r':K \to \overline{D \cap M}$ satisfying that $f = f\circ r'$ for every $f\in\C(K)\cap M$. Then, for every $x\in K$, and every $f\in \C(K)\cap M$ we have $f(r_M(x)) = f(x) = f(r'(x))$; hence, since $\C(K)\cap M$ separates the points of $r_M[K]$, it holds that $r_M(x) = r'(x)$. Since $x\in K$ was arbitrary, we have $r_M = r'$.
Moreover, given $y\in \overline{D\cap M}$ such that $f(y)=f(x)$ for every $f\in\A\cap M$, where $\A\subset \C(K)$ is a set separating the points of $K$ with $\A\in M$, we obtain that $f(y)=f(x)$ for $f\in \overline{\alg((\A\cup \{1\})\cap M)} = \overline{\C(K)\cap M}$ (the last equality follows from Lemma~\ref{l:basics-in-M-2} \eqref{it:algebraDenseInFunctionsFromM}) and so $y=r_M(x)$.

Finally, if $B\subset D$ is such that $B\in M$ then by the above we have $q_M[\overline{B}] = q_M[\overline{B\cap M}]$ and so $r_M[\overline{B}] = r_M[\overline{B\cap M}] = \overline{B\cap M}$.
\end{proof}

Let us note that a compact space $K$ admits a retractional skeleton if and only if there exists a dense set $D\subset K$ such that for every suitable model $M$ which is moreover countable, the set $\C(K)\cap M$ separates the points of $\overline{D\cap M}$, see e.g. \cite[Theorem 4.9]{C14} or \cite[Theorem 19.16]{KKLbook} (that is, the assumption of Lemma~\ref{l:retraction} is satisfied for suitable models which are countable). The following shows that we do not need to assume countability of the model.

\begin{prop}\label{p:characterizationRSkeletonNotCountableModels}
For every suitable model $M$ the following holds: If $(K,\tau)$ is a compact space and $D\subset K$ is a subset of a set induced by a retractional skeleton with $\{D,K,\tau\}\subset M$, then $\C(K)\cap M$ separates the points of $\overline{D\cap M}$.
\end{prop}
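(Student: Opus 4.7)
I will give a unified argument that works for any suitable model $M$, without separating the countable case: the retractional skeleton will be transported into $M$ and then used to reduce separation on $\overline{D\cap M}$ to separation on a metrizable range, which is already delivered by Lemma~\ref{l:basics-in-M-2}\eqref{it:metrizableSubspace}.

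The required absoluteness is arranged once and for all by enriching the finite list $\Phi$ implicit in ``for every suitable model'' with the formulas needed for Lemma~\ref{l:basics-in-M} and Lemma~\ref{l:basics-in-M-2}, together with (i)~$\psi(\mathfrak s,K,D):=$ ``$\mathfrak s$ is a retractional skeleton on $K$ with $D\subset D(\mathfrak s)$'' and its existential closure $\exists\mathfrak s\,\psi(\mathfrak s,K,D)$, and (ii)~the formula $\exists t\in\Gamma\,(\mathfrak s(t)(d)=d)$ with its subformulas. By hypothesis such a skeleton exists in the universe, so Lemma~\ref{l:unique-M} together with $\{K,D,\tau\}\subset M$ produces some $\mathfrak s=(r_s)_{s\in\Gamma}\in M$ with $D\subset D(\mathfrak s)$. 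Then $\Gamma\in M$, and Lemma~\ref{l:basics-in-M-2}\eqref{it:upDirected} tells us that $\Gamma_M:=\Gamma\cap M$ is up-directed; moreover, for every $s\in\Gamma_M$, Lemma~\ref{l:basics-in-M}\eqref{it: Dom-Rng-in-M} gives $r_s=\mathfrak s(s)\in M$ and $r_s[K]=\rng r_s\in M$.

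I now use the limit retraction $R:=R_{\Gamma_M}$ from Lemma~\ref{l: retraction from up-directed subset}, so $R(x)=\lim_{s\in\Gamma_M}r_s(x)$ for every $x\in K$ and $R[K]=\overline{\bigcup_{s\in\Gamma_M}r_s[K]}$. For each $d\in D\cap M\subset D(\mathfrak s)=\bigcup_{t\in\Gamma}r_t[K]$ some $t\in\Gamma$ satisfies $r_t(d)=d$; absoluteness of (ii) with parameters $\mathfrak s,d\in M$ places such $t$ inside $\Gamma_M$, whence $d\in r_t[K]\subset R[K]$. Since $R[K]$ is closed, $\overline{D\cap M}\subset R[K]$, so it suffices to separate points of $R[K]$ by members of $\C(K)\cap M$. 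Fix distinct $x,y\in R[K]$. Because $R$ is a retraction, $x=R(x)=\lim_{s\in\Gamma_M}r_s(x)$ and analogously for $y$; choosing disjoint open neighbourhoods of $x,y$ and invoking the up-directedness of $\Gamma_M$ gives some $s\in\Gamma_M$ with $r_s(x)\neq r_s(y)$. The set $r_s[K]\subset K$ is closed and metrizable with $\{r_s[K],\tau,K\}\subset M$, so Lemma~\ref{l:basics-in-M-2}\eqref{it:metrizableSubspace} yields $g\in\C(K)\cap M$ with $g(r_s(x))\neq g(r_s(y))$. Setting $f:=g\circ r_s$, Lemma~\ref{l:basics-in-M-2}\eqref{it:composition} gives $f\in\C(K)\cap M$, and $f(x)=g(r_s(x))\neq g(r_s(y))=f(y)$, as wanted.

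The only delicate step is the opening one, namely inserting a retractional skeleton into $M$ when only $K,D,\tau$ are assumed to belong to $M$; this is resolved by packaging the existence of such a skeleton into the formula (i) and invoking Lemma~\ref{l:unique-M}. Once $\mathfrak s\in M$ is secured, the proof is simply the composition trick $f=g\circ r_s$, which lifts the metrizable separation on the range $r_s[K]$ to all of $R[K]\supset\overline{D\cap M}$, and no assumption on the cardinality of $M$ is ever used.
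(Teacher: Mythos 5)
Your proof is correct and follows essentially the same route as the paper's: pull a retractional skeleton $\mathfrak s$ into $M$ by absoluteness, form the limit retraction $R_{\Gamma\cap M}$ via Lemma~\ref{l: retraction from up-directed subset}, show $\overline{D\cap M}\subset R_{\Gamma\cap M}[K]$ using absoluteness of a fixed-point formula, then separate points by finding $s\in\Gamma\cap M$ with $r_s(x)\neq r_s(y)$ and composing a separating function on the metrizable range $r_s[K]$ (via Lemma~\ref{l:basics-in-M-2}\eqref{it:metrizableSubspace}) with $r_s$. The only cosmetic difference is that you separate points of all of $R_{\Gamma\cap M}[K]$ rather than just of $\overline{D\cap M}$, and you phrase the existence formula for the skeleton in terms of a single object $\mathfrak s$ rather than a triple $(\Gamma,\le,r)$; neither changes the substance.
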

\begin{proof}
Let $S$ be the union of sets from the statements of Lemma~\ref{l:basics-in-M} and Lemma~\ref{l:basics-in-M-2} and let $\Phi$ be the union of lists of formulas from the statements of Lemma~\ref{l:basics-in-M} and Lemma~\ref{l:basics-in-M-2} enriched by formulas (and their subformulas) marked by $(*)$ in the proof below. Let $M\prec (\Phi; S\cup \{K,D,\tau\})$.

By Lemma~\ref{l:unique-M} and the absoluteness of the following formula (and its subformulas)
\[\begin{split}
\exists \Gamma\; \exists \le\;\exists r\; \big(& D \text{ is a subset of a set induced by}\\ & \text{the retractional skeleton }\{r(s)\colon s\in\Gamma\}\big),\qquad (*)
\end{split}\]
there exist $\Gamma, \le, r\in M$ such that $\{r(s)\colon s\in \Gamma\}$ is a retractional skeleton on $K$ inducing a set containing $D$. For $s\in\Gamma$ we will write below $r_s$ instead of $r(s)$. By Lemma~\ref{l:basics-in-M-2} \eqref{it:upDirected}, the set $\Gamma\cap M$ is up-directed. Hence by Lemma~\ref{l: retraction from up-directed subset} there exists a continuous retraction $R_M\colon K\to K$ defined by $R_{M}(x):=\lim_{s\in \Gamma\cap M}r_{s}(x)$, for every $x\in K$. Using the absoluteness of the following formula (and its subformulas)
\[
\forall u \in D\,\exists s\in\Gamma\, u\in r_s[K],\eqno{(*)}
\]
we obtain that $D\cap M\subset R_{M}[K]$ and therefore $\overline{D\cap M}\subset R_{M}[K]$. Now fix $x,y\in\overline{D\cap M}$ with $x\neq y$. Since $x=\lim_{s\in \Gamma\cap M}r_s(x)$ and $y=\lim_{s\in\Gamma\cap M}r_{s}(y)$ there exists $s\in \Gamma\cap M$ such that $r_{s}(x)\neq r_s(y)$. By Lemma~\ref{l:basics-in-M} \eqref{it: Dom-Rng-in-M}, we have that $r(s)=r_s\in M$ and $r_s[K]\in M$. Thus, by Lemma~\ref{l:basics-in-M-2} \eqref{it:metrizableSubspace}, there exists $f\in \C(K)\cap M$ such that $f(r_s(x))\neq f(r_s(y))$. Now using Lemma~\ref{l:basics-in-M-2} \eqref{it:composition}, we obtain that $g=f\circ r_s\in \C(K)\cap M$ and $g(x)\neq g(y)$. Thus, $\C(K)\cap M$ separates the points of $\overline{D\cap M}$
\end{proof}

The retraction constructed in Lemma~\ref{l:retraction} (whose assumption is satisfied by Proposition~\ref{p:characterizationRSkeletonNotCountableModels} in compact spaces admitting a retractional skeleton) will be the key to our considerations. Let us give it a name.

\begin{defin}\label{def:canonicalRetraction}
Let $K$ be a compact space and let $D\subset K$ be a dense subset that is contained in the set induced by a retractional skeleton. Given a set $M$, we say that $r_M$ is the \emph{canonical retraction associated to $M$, $K$ and $D$} if it is the unique retraction on $K$ satisfying $r_M[K]=\overline{D\cap M}$ and $f = f\circ r_M$, for every $f\in \C(K)\cap M$. We say that a set $M$ \emph{admits canonical retraction} if there exists the canonical retraction associated to $M$, $K$ and $D$.

In the case when $D=K$ we say that $M$ \emph{admits canonical retraction $r_M$ associated to $M$ and $K$}.
\end{defin}

The properties of canonical retractions associated to suitable models are summarized in Theorem~\ref{thm:canonicalRetraction}. We need two lemmas first.

\begin{lemma}\label{l:separation}For every suitable model $M$ the following holds: Let $(K,\tau)$ be a compact space and $\A\subset\C(K)$ be a set separating the points of $K$ with $\{\A,K,\tau\}\subset M$. Then for every compact set $K'\subset K$, $\A\cap M$ separates the points of $K'$ if and only if $\C(K)\cap M$ separates the points of $K'$.
\end{lemma}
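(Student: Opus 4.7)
The plan is to observe that one implication is immediate and then use the density result in Lemma~\ref{l:basics-in-M-2}\eqref{it:algebraDenseInFunctionsFromM} for the other. Concretely, I would first fix a suitable model $M$ obtained from the union of the lists of formulas used in Lemma~\ref{l:basics-in-M} and Lemma~\ref{l:basics-in-M-2} with $\{K,\tau,\A\}$ adjoined to the base set; no extra absoluteness is needed beyond what those lemmas already supply.

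For the forward implication, the inclusion $\A\cap M\subset \C(K)\cap M$ makes the conclusion trivial: any two points of $K'$ separated by a function in $\A\cap M$ are separated by the same function viewed as an element of $\C(K)\cap M$.

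The substantive direction is the converse, which I would prove by contraposition. Assume $\A\cap M$ fails to separate some two distinct points $x,y\in K'$, i.e.\ $f(x)=f(y)$ for every $f\in \A\cap M$. Since trivially $1(x)=1(y)$, the same equality holds for every $f\in(\A\cup\{1\})\cap M$. The set $\{f\in \C(K)\colon f(x)=f(y)\}$ is clearly a closed subalgebra of $\C(K)$, so it contains $\overline{\alg((\A\cup\{1\})\cap M)}$. Now I invoke Lemma~\ref{l:basics-in-M-2}\eqref{it:algebraDenseInFunctionsFromM}, which identifies $\overline{\alg((\A\cup\{1\})\cap M)}$ with $\overline{\C(K)\cap M}$. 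Consequently $f(x)=f(y)$ for all $f\in \C(K)\cap M$, so $\C(K)\cap M$ does not separate $x$ and $y$ either, completing the contrapositive.

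There is essentially no obstacle here once Lemma~\ref{l:basics-in-M-2}\eqref{it:algebraDenseInFunctionsFromM} is available; the only subtlety worth flagging is the need for the constant function $1$, which is why that lemma adjoins $\{1\}$ to $\A$, and which is painlessly handled since constants automatically take the same value on $x$ and $y$. Thus the whole argument is a one-line reduction to the Stone--Weierstrass-type density statement already established.
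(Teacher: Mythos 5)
Your proof is correct and follows essentially the same route as the paper's: both directions reduce to the inclusion $\A\cap M\subset\C(K)\cap M$ for the trivial part and to the identification $\overline{\alg((\A\cup\{1\})\cap M)}=\overline{\C(K)\cap M}$ from Lemma~\ref{l:basics-in-M-2}\eqref{it:algebraDenseInFunctionsFromM} for the substantive part. The only cosmetic difference is that you argue by contraposition where the paper argues by contradiction, which amounts to the same thing.
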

\begin{proof}Let $S$ and $\Phi$ be the set and the list of formulas from the statements of Lemma~\ref{l:basics-in-M} and Lemma~\ref{l:basics-in-M-2}. Let $M\prec (\Phi; S\cup\{K,\tau,\A\})$. 
In order to get a contradiction, let us assume that $\C(K)\cap M$ separates the points of $K'$ but $\A\cap M$ does not separate the points of $K'$. Then also $\overline{\operatorname{alg}((\A\cap M)\cup \{1\})}$ does not separate the points of $K'$ (because if there are $x\neq y$ with $f(x) = f(y)$ for every $f\in \A\cap M$, then also $g(x) = g(y)$ for every $g$ of the form $g = a_0 + \sum_{i=1}^n a_i\Pi_{j=1}^m f_{i,j}$). But this is a contradiction, because using Lemma~\ref{l:basics-in-M-2} \eqref{it:containsspacecontinuousfunctions} and \eqref{it:algebraDenseInFunctionsFromM} we conclude that $\overline{\operatorname{alg}((\A\cap M)\cup \{1\})} = \overline{\C(K)\cap M}$.
\end{proof}

\begin{lemma}\label{l:dense}For every suitable model $M$ the following holds: Let $(K,\tau)$ be a compact space and let $D\subset K$ be a dense subset that is contained in the set induced by a retractional skeleton such that $\{K,D,\tau\}\subset M$. Then the mapping $\Phi:\overline{\C(K)\cap M}\to \C(\overline{D\cap M})$ defined by $\Phi(f):=f|_{\overline{D\cap M}}$, for every $f\in \overline{\C(K)\cap M}$, is a surjective isometry.
\end{lemma}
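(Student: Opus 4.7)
The plan is to split the proof into two independent parts: the isometry property of $\Phi$, and its surjectivity. Throughout, I will rely on Proposition~\ref{p:characterizationRSkeletonNotCountableModels}, which guarantees that $\C(K)\cap M$ separates the points of $\overline{D\cap M}$, so that the canonical retraction $r_M$ from Lemma~\ref{l:retraction} exists (although in the end I will not need $r_M$ explicitly).

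For the isometry part, I would first verify the equality $\|\Phi(f)\|=\|f\|$ for $f\in\C(K)\cap M$. By Lemma~\ref{l:basics-in-M-2}\eqref{it:norm} we have $\|f\|=\|f|_{D\cap M}\|$, and since $D\cap M\subset \overline{D\cap M}\subset K$ this forces $\|f|_{\overline{D\cap M}}\|=\|f\|$. Hence $\Phi$ is an isometry on the dense subset $\C(K)\cap M$ of its domain. Since $\Phi$ is the restriction of a linear contraction (the restriction map on $\C(K)$), a routine approximation argument (take $f_n\in\C(K)\cap M$ with $f_n\to g$, use continuity of the norm and of the restriction) extends the isometry to all of $\overline{\C(K)\cap M}$.

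For surjectivity, I would apply the Stone--Weierstrass theorem to the image $\Phi[\overline{\C(K)\cap M}]$ inside $\C(\overline{D\cap M})$. Three things must be checked: that this image is a closed subalgebra of $\C(\overline{D\cap M})$, that it contains the constant function $1$, and that it separates the points of $\overline{D\cap M}$. Closedness is immediate because $\Phi$ is an isometry with complete domain. The subalgebra property follows from the fact that restriction is multiplicative and that $\overline{\C(K)\cap M}$ is itself a subalgebra of $\C(K)$, which is exactly the content of Lemma~\ref{l:basics-in-M-2}\eqref{it:containsspacecontinuousfunctions}. The constant function $1$ lies in $\C(K)\cap M$ by the same item. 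Point-separation on $\overline{D\cap M}$ is furnished by Proposition~\ref{p:characterizationRSkeletonNotCountableModels}. Stone--Weierstrass then yields $\Phi[\overline{\C(K)\cap M}]=\C(\overline{D\cap M})$.

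There is no real obstacle here; the main subtlety is simply checking that each ingredient for Stone--Weierstrass is already packaged in the preliminary lemmas of Section~\ref{sec:prelim} (subalgebra, constants, separation), which is why the result has been reduced to a one-paragraph application once the isometry is in hand.
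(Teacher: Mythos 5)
Your proof is correct and follows essentially the same route as the paper's: isometry on $\C(K)\cap M$ via Lemma~\ref{l:basics-in-M-2}\eqref{it:norm} extended by density, and surjectivity via Stone--Weierstrass using Lemma~\ref{l:basics-in-M-2}\eqref{it:containsspacecontinuousfunctions} for the closed-subalgebra-with-unit property and Proposition~\ref{p:characterizationRSkeletonNotCountableModels} for point separation.
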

\begin{proof}
Let $S$ and $\Phi$ be the union of countable sets and finite lists of formulas from the statements of Lemma~\ref{l:basics-in-M-2} and Proposition~\ref{p:characterizationRSkeletonNotCountableModels}. Let $M\prec (\Phi; S\cup \{K,D,\tau\})$. By Lemma~\ref{l:basics-in-M-2} \eqref{it:norm}, we have that $\|f\| = \|f|_{D\cap M}\|$, for every $f\in \C(K)\cap M$, so the mapping $\Phi|_{\C(K)\cap M}$ is an isometry which implies that $\Phi$ is also an isometry. It remains to show that it is surjective. By Lemma~\ref{l:basics-in-M-2} \eqref{it:containsspacecontinuousfunctions}, $\overline{\C(K)\cap M}$ is a closed subalgebra of $\C(K)$ and so the image of $\Phi$ is a closed subalgebra of $\C(\overline{D\cap M})$ which, by Proposition~\ref{p:characterizationRSkeletonNotCountableModels} separates the points of $\overline{D\cap M}$. Therefore, it follows from Stone-Weierstrass theorem that $\Phi[\overline{\C(K)\cap M}] = \C(\overline{D\cap M})$.
\end{proof}

\begin{thm}\label{thm:canonicalRetraction}
For every suitable model $M$ the following holds: Let $(K,\tau)$ be a compact space and let $D\subset K$ be a dense subset that is contained in the set induced by a retractional skeleton with $\{K,D,\tau\}\subset M$. Then there exists a unique retraction $r_M:K\to \overline{D\cap M}$ with $r_M[K] = \overline{D\cap M}$ and $f=f\circ r_M$, for every $f\in\C(K)\cap M$. Moreover, for this retraction $r_M$ the following holds:
\begin{enumerate}[label = (\roman*)]
    \item\label{it:characterizationFirst} Whenever $\A\subset \C(K)$ separates the points of $K$ and $\A\in M$, then for every $x\in K$, $r_M(x)$ is the unique point from $\overline{D\cap M}$ satisfying $f(r_M(x)) = f(x)$, for every $f\in \A\cap M$. 
    \item\label{it:characterizationSecond} Whenever $(\Gamma,\leq)$ is up-directed and $\sigma$-complete and $r:\Gamma\to\C(K,K)$ is a mapping such that $\mathfrak{s} = \{r(s)\colon s\in\Gamma\}$ is a retractional skeleton on $K$ with $D\subset D(\mathfrak{s})$ and $\{r,\Gamma,\leq\}\subset M$, then the following holds.
    \begin{enumerate}[label=(\alph*)]
        \item\label{it:imageOfRs} For every $s\in \Gamma\cap M$, $r(s)[K]\subset \overline{D\cap M}$.
        \item\label{it:formulaUsingSkeleton} \[r_M(x) = \lim_{s\in\Gamma\cap M} r(s)(x),\quad x\in K.
    \]
    \item\label{it:countableModelsAreInSkeleton} If $M$ is countable, then $r_M = r(s)$ for $s = \sup \Gamma\cap M$.
    \item\label{it:skeletonOnTheInducedSubspace} $\{r(s)|_{r_M[K]}\colon s \in (\Gamma\cap M)_\sigma\}$ is a retractional skeleton on $r_M[K]$ with induced set $D(\mathfrak{s})\cap r_M[K]$.
    \item\label{it:commutativeCaseModels} If $\mathfrak{s}$ is commutative, then $r_M[D(\mathfrak{s})] = D(\mathfrak{s})\cap r_M[K]$.
        \end{enumerate}
    \item\label{it:homeomorphicImageOfCanonicalRetraction} Whenever $h:K\to L$ is a surjective homeomorphism with $h\in M$, then $r:=h\circ r_M\circ h^{-1}$ is the unique retraction on $L$ such that $r[L] = \overline{h[D]\cap M}$ and $f\circ r = f$, for every $f\in \C(L)\cap M$.
    \item\label{it:weight} $\w(r_M[K])\leq |M|$.
\end{enumerate}
\end{thm}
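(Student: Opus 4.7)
The plan is to bootstrap existence and uniqueness of $r_M$ from the earlier lemmas and then verify each enumerated property in turn. Proposition~\ref{p:characterizationRSkeletonNotCountableModels} guarantees that $\C(K)\cap M$ separates the points of $\overline{D\cap M}$, which is precisely the hypothesis of Lemma~\ref{l:retraction}. Applying that lemma delivers the unique retraction $r_M\colon K\to\overline{D\cap M}$ with $r_M[K]=\overline{D\cap M}$ and $f=f\circ r_M$ for every $f\in\C(K)\cap M$, and item~\ref{it:characterizationFirst} is then immediate from the ``moreover'' part of Lemma~\ref{l:retraction}.

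For item~\ref{it:characterizationSecond} I would introduce the auxiliary retraction $R_M:=R_{\Gamma\cap M}$ from Lemma~\ref{l: retraction from up-directed subset} (available because $\Gamma\cap M$ is up-directed by Lemma~\ref{l:basics-in-M-2}\eqref{it:upDirected}) and show $R_M=r_M$. The crucial step is~\ref{it:imageOfRs}: for $s\in\Gamma\cap M$ one has $r(s)\in M$ and $r(s)[K]\in M$ by Lemma~\ref{l:basics-in-M}\eqref{it: Dom-Rng-in-M}, so by Lemma~\ref{l:basics-in-M-2}\eqref{it:metrizableSubspace} the set $r(s)[K]\cap M$ is dense in the metrizable compactum $r(s)[K]$ and it suffices to check $r(s)[K]\cap M\subset\overline{D\cap M}$. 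For $y\in r(s)[K]\cap M\subset D(\mathfrak{s})$, the Fr\'echet-Urysohn property of $D(\mathfrak{s})$ together with the density of $D$ in $D(\mathfrak{s})$ yields a sequence $(d_n)\subset D$ with $d_n\to y$; after adding the corresponding absoluteness formula to the finite list defining the suitable model, Lemma~\ref{l:unique-M} produces such a sequence inside $M$, so $(d_n)\subset D\cap M$ and $y\in\overline{D\cap M}$. Once~\ref{it:imageOfRs} is in place, $R_M[K]=\overline{\bigcup_{s\in\Gamma\cap M}r(s)[K]}\subset\overline{D\cap M}$, while the reverse inclusion already appears in the proof of Proposition~\ref{p:characterizationRSkeletonNotCountableModels}. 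To identify $R_M$ with $r_M$ pointwise, observe that $f\circ r(s)\in\C(K)\cap M$ for every $f\in\C(K)\cap M$, hence $f\circ r(s)\circ r_M=f\circ r(s)$; passing to the limit in $s\in\Gamma\cap M$ gives $f(R_M(r_M(x)))=f(R_M(x))$, and since $r_M(x)\in R_M[K]$ forces $R_M(r_M(x))=r_M(x)$, one obtains $f(R_M(x))=f(x)$ for all $f\in\C(K)\cap M$, whence $R_M(x)=r_M(x)$ by the separation statement of Proposition~\ref{p:characterizationRSkeletonNotCountableModels}. Item~\ref{it:countableModelsAreInSkeleton} then follows by extracting a countable cofinal increasing sequence in the countable directed set $\Gamma\cap M$ and invoking continuity condition (iii) in the definition of a retractional skeleton, and items~\ref{it:skeletonOnTheInducedSubspace},~\ref{it:commutativeCaseModels} reduce at once to Lemma~\ref{l: retraction from up-directed subset}\ref{it:skeletonOnTheSubspace},\ref{it:commutativeCase}.

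For item~\ref{it:homeomorphicImageOfCanonicalRetraction} I would set $r:=h\circ r_M\circ h^{-1}$ (with $h^{-1}\in M$ by Lemma~\ref{l:basics-in-M-2}\eqref{it:inverse-in-M}); a direct computation shows $r$ is a continuous retraction with $r[L]=\overline{h[D\cap M]}=\overline{h[D]\cap M}$, where the last equality uses Lemma~\ref{l:basics-in-M-2}\eqref{it:one-to-one-imageOfM}, and $f\circ r=f$ for $f\in\C(L)\cap M$ follows from $f\circ h\in\C(K)\cap M$. Uniqueness is handled by conjugating any candidate $r'$ back via $r'':=h^{-1}\circ r'\circ h$, a retraction on $K$ that satisfies the defining properties of $r_M$, forcing $r''=r_M$ and hence $r'=r$. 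Item~\ref{it:weight} is obtained from the evaluation map $\overline{D\cap M}\to\er^{\C(K)\cap M}$, which is a homeomorphic embedding because $\C(K)\cap M$ separates the points of $\overline{D\cap M}$; this bounds $\w(r_M[K])\leq\max(\omega,|\C(K)\cap M|)\leq|M|$. The main obstacle I anticipate is the verification of~\ref{it:imageOfRs}: one must pin down an absoluteness formula that drops the Fr\'echet-Urysohn sequence inside $M$ via Lemma~\ref{l:unique-M}, and confirm that $D\subset D(\mathfrak{s})$ is truly dense enough in $D(\mathfrak{s})$ for that sequence to exist in the first place.
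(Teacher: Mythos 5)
Your proposal is correct and follows the paper's overall plan: Proposition~\ref{p:characterizationRSkeletonNotCountableModels} plus Lemma~\ref{l:retraction} for existence, uniqueness, and item \ref{it:characterizationFirst}; the auxiliary retraction $R_{\Gamma\cap M}$ from Lemma~\ref{l: retraction from up-directed subset} for the items in \ref{it:characterizationSecond}; straightforward conjugation for \ref{it:homeomorphicImageOfCanonicalRetraction}; and the evaluation-map embedding for \ref{it:weight}. There is one genuine local difference worth noting, in the verification that $R_M=r_M$ in item \ref{it:characterizationSecond}\ref{it:formulaUsingSkeleton}. The paper shows $f\circ R_M\in\overline{\C(K)\cap M}$ by invoking a lemma on uniform convergence of $f\circ r_s$ to $f\circ R_M$ (\cite[Lemma 5.2]{K20}) and then appeals to the isometry $\overline{\C(K)\cap M}\to\C(\overline{D\cap M})$ of Lemma~\ref{l:dense}. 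You instead argue pointwise: $f\circ r(s)\circ r_M=f\circ r(s)$ for each $s\in\Gamma\cap M$ because $f\circ r(s)\in\C(K)\cap M$, take the limit in $s$ to get $f\bigl(R_M(r_M(x))\bigr)=f\bigl(R_M(x)\bigr)$, use $r_M(x)\in R_M[K]$ to collapse the left-hand side, and finish with the separation from Proposition~\ref{p:characterizationRSkeletonNotCountableModels}. Your version is a bit more elementary, bypassing the external reference and Lemma~\ref{l:dense} entirely; the paper's route, by contrast, factors the work through Lemma~\ref{l:dense}, which it also needs elsewhere. Two small remarks: in \ref{it:characterizationSecond}\ref{it:imageOfRs} you substitute Fr\'echet--Urysohn sequences for the paper's countable-tightness sets -- both are available since $D(\mathfrak{s})$ is Fr\'echet--Urysohn -- and the absoluteness step you gesture at (``drop the sequence into $M$'') does work, since the range of the witnessing sequence is a countable element of $M$ and hence a subset of $M$; and the ``reverse inclusion'' $D\cap M\subset R_M[K]$ is not literally cited from Proposition~\ref{p:characterizationRSkeletonNotCountableModels} (there the skeleton was produced by absoluteness rather than given), but the identical one-line absoluteness argument applies, so this is only a cosmetic imprecision.
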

\begin{proof}Denote by $\B$ the set of all open intervals with rational endpoints. Let $S$ be the union of sets from the statements of Lemma~\ref{l:basics-in-M}, Lemma~\ref{l:basics-in-M-2}, Lemma~\ref{l:retraction}, Lemma~\ref{l:dense} and Proposition~\ref{p:characterizationRSkeletonNotCountableModels} and let $\Phi$ be the union of lists of formulas from the statements of Lemma~\ref{l:basics-in-M}, Lemma~\ref{l:basics-in-M-2}, Lemma~\ref{l:retraction}, Lemma~\ref{l:dense} and Proposition~\ref{p:characterizationRSkeletonNotCountableModels} enriched by the formula (and its subformulas) marked by $(*)$ in the proof below. Let $M\prec (\Phi; S\cup \{K,D,\tau\})$. It follows directly from Proposition~\ref{p:characterizationRSkeletonNotCountableModels} and Lemma~\ref{l:retraction} that the retraction $r_M$ exists and that it satisfies \ref{it:characterizationFirst}.

Let $\{r,\Gamma,\leq\}\subset M$ be as in \ref{it:characterizationSecond}. For $s\in\Gamma\cap M$, by Lemma~\ref{l:basics-in-M} \eqref{it: Dom-Rng-in-M} we have that $r(s)[K] \in M$ and thus Lemma~\ref{l:basics-in-M-2} \eqref{it:metrizableSubspace} ensures that $r(s)[K]\subset \overline{r(s)[K]\cap M}$. Moreover, for every $x\in r(s)[K]\cap M$, using the countable tightness of $D(\mathfrak{s})$ the following formula holds
\[\exists C\subset D\;\quad ( \text{$C$ is countable and }x\in \overline{C}).\eqno{(*)}\] Thus, by Lemma~\ref{l:unique-M}  there exists a countable set $C\subset D$ with $C\in M$ (which implies $C\subset M$) such that $x\in \overline{C}$, which implies that $x\in \overline{D\cap M}$, so $r(s)[K]\subset \overline{r(s)[K]\cap M}\subset \overline{D\cap M}$ and \ref{it:imageOfRs} holds. By Lemma~\ref{l:basics-in-M-2} \eqref{it:upDirected}, $\Gamma\cap M$ is up-directed and so, using Lemma~\ref{l: retraction from up-directed subset} the limit $\lim_{s\in\Gamma\cap M} r(s)(x) =:R_M(x)$ exists for every $x\in K$.

We \emph{claim} that $R_M=r_M$. Note that due to the uniqueness of $r_M$, it is enough to show that $R_M[K] \subset \overline{D \cap M}$ and that $f\circ R_M = f$, for every $f\in\C(K)\cap M$. By the definition of $R_M$, using \ref{it:imageOfRs}, we observe that $R_M[K] \subset \overline{D \cap M}$.   Moreover, for every $x\in D\cap M$, by Lemma~\ref{l:unique-M} and the absoluteness of the following formula (and its subformulas)
\[\exists s\in\Gamma\; \big(r_s(x)=x\big),\eqno{(*)}\]
there is $s\in\Gamma\cap M$ with $r_s(x)=x$ and so $x\in R_M[K]$. Thus, we have that $R_M[K] = \overline{D\cap M}$. Pick $f\in \C(K)\cap M$. Since $(r_s)_{s\in\Gamma\cap M}$ converges pointwise to $R_M$, using \cite[Lemma 5.2]{K20} we conclude that $(f\circ r_s)_{s\in\Gamma\cap M}$ converges in norm to $f\circ R_M$. Therefore $f\circ R_M\in \overline{\C(K)\cap M}$, since it follows from Lemma~\ref{l:basics-in-M} \eqref{it: Dom-Rng-in-M} and Lemma~\ref{l:basics-in-M-2} \eqref{it:composition} that $f\circ r_s\in \C(K) \cap M$, for every $s\in\Gamma\cap M$.
Thus $f$ and $f\circ R_M$ are two functions from $\overline{\C(K)\cap M}$ which have the same values on $\overline{D\cap M}$ and so, by Lemma~\ref{l:dense}, $f = f\circ R_M$. This proves the claim and establishes \ref{it:formulaUsingSkeleton} which, using Lemma~\ref{l: retraction from up-directed subset}, implies \ref{it:countableModelsAreInSkeleton}, \ref{it:skeletonOnTheInducedSubspace} and \ref{it:commutativeCaseModels}.

The proof of \ref{it:homeomorphicImageOfCanonicalRetraction} is easy once we realize that by Lemma~\ref{l:basics-in-M-2} \eqref{it:one-to-one-imageOfM} we have that $h[D\cap M] = h[D]\cap M$ and that $f\circ h\in \C(K)\cap M$, for every $f\in \C(L)\cap M$. We omit the straightforward details. 

Finally, \ref{it:weight} follows from Proposition~\ref{p:characterizationRSkeletonNotCountableModels}, since $r_M[K]=\overline{D\cap M}$.
\end{proof}

\subsection{Families of canonical retractions} Here we study families of canonical retractions associated to suitable models. Those are more-or-less straightforward consequences of Theorem~\ref{thm:canonicalRetraction}. The most important for what follows is Proposition~\ref{prop:rri} which will be repeatedly used further.

\begin{lemma}\label{l:propertiesOfCanonicalRetractions}There are a countable set $S$ and a finite list of formulas $\Phi$ such that the following holds: Let $(K, \tau)$ be a compact space, $(\Gamma,\leq)$ be an up-directed set, $r:\Gamma\to\C(K,K)$ be a mapping such that $\mathfrak{s}:=\{r(s)\colon s\in\Gamma\}$ is a retractional skeleton on $K$ and let $D\subset D(\mathfrak{s})$ be dense in $K$. Put $S'=S\cup\{K, \tau, D,\Gamma,\leq,r\}$. Then every $M\prec(\Phi; S')$ admits the canonical retraction $r_M$ associated to $M$, $K$ and $D$.

Moreover, we have the following.
\begin{enumerate}
    \item\label{it:commutativity} If $M, N\prec(\Phi; S')$ and $M\subset N$ then $r_M\circ r_N = r_N\circ r_M = r_M$.
    \item\label{it:increasingSequence} Let $\M$ be an up-directed set with $M\prec(\Phi; S')$, for every $M\in \M$ and let $M_\infty:=\bigcup_{M\in\M} M$. Then $M_\infty\prec (\Phi; S')$ and $\lim_{M\in\M}r_{M}(x) = r_{M_\infty}(x)$, $x\in K$.
    \item\label{it:toIdentity} If $\U$ is a basis of $\tau$, $\M$ is an up-directed set with $M\prec(\Phi; S')$, for every $M\in \M$ and $\U\subset\bigcup_{M\in\M} M$, then $\lim_{M\in\M}r_{M}(x) = x$, for every $x\in K$.
\end{enumerate}
\end{lemma}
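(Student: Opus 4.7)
The plan is to take $S$ and $\Phi$ as the unions of the countable sets and finite lists of formulas provided by Lemma~\ref{l:basics-in-M}, Lemma~\ref{l:basics-in-M-2}, Lemma~\ref{l:retraction}, Lemma~\ref{l:dense}, Proposition~\ref{p:characterizationRSkeletonNotCountableModels} and Theorem~\ref{thm:canonicalRetraction}, enriched (while keeping closure under subformulas, as is standard throughout this paper) by the formula $\exists y\in D\;(y\in U)$ with free variables $D$ and $U$. For any $M\prec(\Phi;S')$ we have $S'\subset M$, so in particular $\{K,D,\tau\}\subset M$; since $D\subset D(\mathfrak{s})$, Theorem~\ref{thm:canonicalRetraction} applies and the canonical retraction $r_M$ exists with all the characterizations listed there.

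For \eqref{it:commutativity}, I would rely on the uniqueness characterization of canonical retractions. Since $M\subset N$ gives $\overline{D\cap M}\subset \overline{D\cap N}$, the range of $r_M$ lies in the range of $r_N$, whence $r_N\circ r_M = r_M$ because $r_N$ is the identity on its range. For $r_M\circ r_N = r_M$, note that $r_M(r_N(x))\in \overline{D\cap M}$, and for every $f\in\C(K)\cap M\subset \C(K)\cap N$ we compute $f(r_M(r_N(x))) = f(r_N(x)) = f(x) = f(r_M(x))$, so the uniqueness part of Theorem~\ref{thm:canonicalRetraction} (via Lemma~\ref{l:retraction}) yields the identity.

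For \eqref{it:increasingSequence}, the first step is to verify $M_\infty\prec(\Phi;S')$ by a Tarski--Vaught union argument; this is where I expect the only real technical subtlety, but it is routine given that $\Phi$ is closed under subformulas and $\M$ is up-directed under inclusion (so every finite tuple of parameters in $M_\infty$ already sits in some $M\in\M$). Once $M_\infty\prec(\Phi;S')$, Theorem~\ref{thm:canonicalRetraction}\ref{it:formulaUsingSkeleton} identifies $r_M$ with $R_{\Gamma\cap M}$ and $r_{M_\infty}$ with $R_{\Gamma\cap M_\infty}$ in the notation of Lemma~\ref{l: retraction from up-directed subset}. Each $\Gamma\cap M$ is up-directed by Lemma~\ref{l:basics-in-M-2}\eqref{it:upDirected}, and the family $\{\Gamma\cap M\colon M\in\M\}$ is up-directed under inclusion with union $\Gamma\cap M_\infty$, so Lemma~\ref{l: retraction from up-directed subset}\ref{it:generalUpDirected} delivers $\lim_{M\in\M}r_M(x) = r_{M_\infty}(x)$.

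For \eqref{it:toIdentity} I would use \eqref{it:increasingSequence} to reduce to showing $r_{M_\infty} = \operatorname{id}_K$, equivalently $\overline{D\cap M_\infty} = K$. Given any nonempty $U\in\U$ there exists $M_0\in\M$ with $U\in M_0$; since $D$ is dense in $K$, the formula $\exists y\in D\;(y\in U)$ holds in the universe with parameters $D,U\in M_0$, so by absoluteness we obtain a witness $y\in D\cap U\cap M_0\subset D\cap M_\infty$. Hence $D\cap M_\infty$ meets every element of the basis $\U$ and is therefore dense in $K$; then $r_{M_\infty}[K] = \overline{D\cap M_\infty} = K$, and a continuous retraction whose range is the whole space must be the identity, so $\lim_{M\in\M} r_M(x) = r_{M_\infty}(x) = x$ for every $x\in K$.
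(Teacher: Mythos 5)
Your proposal is correct, and parts (1) and (2) are essentially the argument in the paper: (1) is the same two-step computation with $\overline{D\cap M}\subset\overline{D\cap N}$ for one composition and testing against $f\in\C(K)\cap M$ for the other; (2) is the Tarski--Vaught union argument (the paper cites \cite[Lemma~2.1]{C12} together with Lemma~\ref{l:unique-M}) followed by Theorem~\ref{thm:canonicalRetraction}~\ref{it:formulaUsingSkeleton} and Lemma~\ref{l: retraction from up-directed subset}~\ref{it:generalUpDirected}.

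For (3) you take a genuinely different route. You pass through the union model: you show $D\cap M_\infty$ is dense (by adding $\exists y\in D\,(y\in U)$ to $\Phi$ and invoking absoluteness to pick a witness in $M_0$), deduce $r_{M_\infty}[K]=\overline{D\cap M_\infty}=K$ so $r_{M_\infty}=\operatorname{id}_K$, and then finish by quoting your own part (2). The paper's proof never mentions $M_\infty$ in part (3): given $x\in K$ and $U\in\U$ with $x\in U$, it picks $V\in\U$ with $x\in V\subset\overline{V}\subset U$, finds $M_0\in\M$ containing $V$, and for any $M\supset M_0$ in $\M$ uses Lemma~\ref{l:basics-in-M}~\eqref{operations-sets-in-M} to get $V\cap D\in M$ and then Lemma~\ref{l:retraction}~\eqref{it: imageofclosure} to conclude $r_M(x)\in r_M[\overline{V\cap D}]=\overline{V\cap D\cap M}\subset\overline{V}\subset U$. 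Your version is slightly more streamlined because it reuses (2), at the cost of adding one more formula to $\Phi$ and of relying on the union model being a suitable model; the paper's version is more self-contained and pointwise, with the side benefit of not needing any extra formula beyond those already required for Theorem~\ref{thm:canonicalRetraction}. Both are valid.
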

\begin{proof}
The existence of $S$ and $\Phi$ follows from Theorem~\ref{thm:canonicalRetraction}. Let us prove the moreover part using the additional properties of canonical retractions established in Theorem~\ref{thm:canonicalRetraction}.\\
\eqref{it:commutativity} Since $r_M[K] = \overline{D\cap M}\subset \overline{D\cap N}=r_N[K]$, we have $r_M = r_N\circ r_M$. Moreover, for every $f\in\C(K)\cap M\subset \C(K)\cap N$ we have 
    \[f(r_{M}(x)) = f(x) = f(r_N(x)) = f(r_M(r_N(x))),\quad x\in K\]
    and so $r_M(x) = r_M(r_N(x))$, for every $x\in K$.\\
\eqref{it:increasingSequence} Since $\M$ is up-directed, it follows from \cite[Lemma~2.1]{C12} and Lemma~\ref{l:unique-M} that $M_\infty\prec (S';\Phi)$.  Now, combining Theorem~\ref{thm:canonicalRetraction} \ref{it:characterizationSecond} with Lemma \ref{l:basics-in-M-2} \eqref{it:upDirected} and Lemma~\ref{l: retraction from up-directed subset} \ref{it:generalUpDirected} we obtain that $\lim_{M\in\M}r_{M}(x) = r_{M_\infty}(x)$, $x\in K$.\\
\eqref{it:toIdentity} Pick $x\in K$, $U\in\U$ such that $x\in U$ and find $V \in\U$ with $x\in V\subset \overline{V}\subset U$. Since $V \in \U$ and $\U \subset \bigcup_{M \in \M} M$, there exists $M_0 \in \M$ such that $V \in M_0$. Now fix $M \in \M$ with $M_0 \subset M$. It follows from Lemma~\ref{l:basics-in-M} \eqref{operations-sets-in-M} that $V \cap D \in M$. Therefore Lemma~\ref{l:retraction} ensures that $r_M(x)\in r_M[\overline{V\cap D}]\subset \overline{V\cap D}\subset U$, since $x\in \overline{V\cap D}$.
\end{proof}

\begin{prop}\label{prop:rri}There exist a countable set $S$ and a finite list of formulas $\Phi$ such that the following holds:\\
Let $(K,\tau)$ be a compact space, $(\Gamma,\leq)$ be an up-directed set and $r:\Gamma\to\C(K,K)$ be a mapping such that $\mathfrak{s}=\{r(s)\colon s\in\Gamma\}$ is a retractional skeleton on $K$. Let $\kappa:=\w(K)$ and $\U:\kappa\to\tau$ be such that $\{\U(i)\colon i<\kappa\}$ is an open basis of $\tau$. Put $S':=S\cup \{K,D(\mathfrak{s}),\Gamma,\leq,r,\tau,\U\}$. Let $(M_\alpha)_{\alpha\leq \kappa}$ be a sequence of sets satisfying
\begin{enumerate}[label=(R\alph*)]
    \item\label{it:areModels} $M_\alpha\prec (\Phi;S')$, for every $\alpha\in [0,\kappa]$,
    \item\label{it:cardinalityMalpha} $\vert M_\alpha \vert \le \max(\omega, \vert \alpha \vert)$, for every $\alpha\in [0,\kappa]$,
    \item\label{it:areIncreasing} $M_{\alpha+1}\supset M_\alpha\cup\{\alpha\}$, for every $\alpha\in [0,\kappa)$,
    \item\label{it:areContinuous} $M_\alpha = \bigcup_{\beta<\alpha} M_\beta$, if $\alpha\in(0,\kappa]$ is a limit ordinal.
\end{enumerate}
Then for every $\alpha\in[0,\kappa]$ there exists a canonical retraction $r_\alpha$ associated to $M_\alpha$, $K$ and $D(\mathfrak{s})$ and the following holds.

\begin{enumerate}[label=(R\arabic*)] 
\newcounter{saveenum} 
    \item\label{it:comutative} For every $\alpha<\beta$, we have that $r_\alpha\circ r_\beta = r_\beta\circ r_\alpha = r_\alpha$.
    \item\label{it:separatePoints} $r_\alpha(x)\to x$, for every $x\in K$.
     \item\label{it:upDirectedLimit} Let $\alpha\le\kappa$, let $\eta:[0,\alpha)\to \kappa$ be an increasing function and let $\xi \le\kappa$ be a limit ordinal with $\sup_{\beta<\alpha} \eta(\beta) = \xi$. Then $\lim_{\beta < \alpha} r_{\eta(\beta)}(x) = r_\xi(x)$, for every $x\in K$.
     \item\label{it:identity} $r_\kappa = id$.
     \item\label{it:skeletonOnSubset} For every $\alpha\in[0,\kappa]$, we have that $\w(r_\alpha[K])\leq max(\omega, |\alpha|)$ and $(r_s|_{r_\alpha[K]})_{s\in (\Gamma\cap M_\alpha)_\sigma}$ is a retractional skeleton on $r_\alpha[K]$ with induced set $D(\mathfrak{s})\cap r_\alpha[K]$.
     \item\label{it:compositionWithRa} If $\A$ is a closed subset of $C(K)$ and $f \circ r_s \in \A$, for every $f \in \A$ and $s\in\Gamma$, then $\{f \circ r_\alpha\colon f\in\A, \alpha\le\kappa\}\subset \A$.
      \item\label{it:ordinalSeparatingPoints} If $x, y$ are distinct points in $K$ then $\beta:=\min\{\alpha< \kappa\colon r_\alpha(x)\neq r_\alpha(y)\}$ exists and it is a successor ordinal or $\beta=0$.
      \item\label{it:secondFormula} For every $\alpha\leq \kappa$, the set $\Gamma\cap M_\alpha$ is up-directed and $r_\alpha(x) = \lim_{s\in (M_\alpha\cap \Gamma)} r_s(x)$, for every $x \in K$.
      \item \label{comutaralpha3} For every $\alpha \le \kappa$ and $t \in (\Gamma \cap M_\alpha)_\sigma$, it holds that $r_t \circ r_\alpha=r_\alpha \circ r_t$.
      \item \label{Model-compositionRalpha} Let $\A \subset \C(K)$ be a set that separates the points of $K$ and $\alpha < \kappa$. If $\A \in M_\alpha$,  then $\A \cap M_\alpha$ separates the points of $r_\alpha[K]$ and $f \circ r_\alpha=f$, for every $f \in \A \cap M_\alpha$.
      \setcounter{saveenum}{\value{enumi}} 
\end{enumerate}
Moreover, if $\mathfrak{s}$ is full or commutative, then $r_\alpha[D(\mathfrak{s})]\subset D(\mathfrak{s})$, for every $\alpha\in [0,\kappa]$ and if $D\subset D(\mathfrak{s})$ is such that $r_\alpha[D]\subset D$ for every $\alpha\in [0,\kappa]$, then we also have the following.
\begin{enumerate}[label=(R\arabic*)]
 \setcounter{enumi}{\value{saveenum}} 
    \item\label{it:countableSupport2} The sets $\{r_\alpha(x)\colon \alpha<\kappa\}$ and $\{\alpha<\kappa: r_{\alpha}(x)\neq r_{\alpha+1}(x)\}$ are countable, for every $x\in D$.
\end{enumerate}
\end{prop}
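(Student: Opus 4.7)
The existence of each canonical retraction $r_\alpha$ is immediate from Theorem~\ref{thm:canonicalRetraction}, since by \ref{it:areModels} each $M_\alpha$ is a suitable model containing $\{K,D(\mathfrak{s}),\tau,\Gamma,\leq,r\}$. The strategy is to read off items \ref{it:comutative}--\ref{Model-compositionRalpha} from Theorem~\ref{thm:canonicalRetraction}, Lemma~\ref{l:propertiesOfCanonicalRetractions} and Lemma~\ref{l: retraction from up-directed subset}, and to handle \ref{it:countableSupport2} by a dedicated transfinite argument.

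A routine transfinite induction using \ref{it:areIncreasing} and \ref{it:areContinuous} shows that $M_\alpha\subset M_\beta$ whenever $\alpha\leq\beta$ and that $\alpha\subset M_\alpha$ for every $\alpha\leq\kappa$. This yields \ref{it:comutative} via Lemma~\ref{l:propertiesOfCanonicalRetractions}\eqref{it:commutativity}. For \ref{it:separatePoints}, from $\U\in M_0\subset M_{\alpha+1}$ and $\alpha\in M_{\alpha+1}$ one gets $\U(\alpha)\in M_{\alpha+1}$, placing a basis of $\tau$ inside $\bigcup_{\alpha<\kappa}M_\alpha$, so Lemma~\ref{l:propertiesOfCanonicalRetractions}\eqref{it:toIdentity} applies. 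Item \ref{it:upDirectedLimit} then comes from Lemma~\ref{l:propertiesOfCanonicalRetractions}\eqref{it:increasingSequence} after checking $\bigcup_{\beta<\alpha}M_{\eta(\beta)}=M_\xi$ via cofinality of $(\eta(\beta))$ in $\xi$ and \ref{it:areContinuous}, and \ref{it:identity} is the case $\alpha=\kappa$. Items \ref{it:skeletonOnSubset}--\ref{Model-compositionRalpha} are essentially restatements of Theorem~\ref{thm:canonicalRetraction}: \ref{it:skeletonOnSubset} combines parts (ii)(d) and (v); \ref{it:secondFormula} is part (ii)(b); \ref{comutaralpha3} is Lemma~\ref{l: retraction from up-directed subset}\ref{it:sigmaClosure} under the identification $r_\alpha=R_{\Gamma\cap M_\alpha}$ supplied by \ref{it:secondFormula}; \ref{Model-compositionRalpha} combines Lemma~\ref{l:separation} with the defining property $f\circ r_\alpha=f$ for $f\in\C(K)\cap M_\alpha$; \ref{it:compositionWithRa} uses the norm-convergence $f\circ r_s\to f\circ r_\alpha$ established in the proof of Theorem~\ref{thm:canonicalRetraction} via \cite[Lemma 5.2]{K20} to place $f\circ r_\alpha$ in the closed set $\A$; and \ref{it:ordinalSeparatingPoints} uses Hausdorffness with \ref{it:separatePoints} to produce some $\alpha<\kappa$ with $r_\alpha(x)\neq r_\alpha(y)$, while \ref{it:upDirectedLimit} forbids a limit ordinal for the minimum. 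The ``moreover'' clause on $D(\mathfrak{s})$-invariance is trivial for full $\mathfrak{s}$ and is Theorem~\ref{thm:canonicalRetraction}(ii)(e) for commutative $\mathfrak{s}$.

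The main obstacle is \ref{it:countableSupport2}. For $x\in D\subset D(\mathfrak{s})$ I plan to pick $s_0\in\Gamma$ with $r_{s_0}(x)=x$ and show that the transfinite sequence $(r_\alpha(x))_{\alpha\leq\kappa}$ lives in the metrizable compact $K_0:=r_{s_0}[K]$. In the commutative case this is immediate from $r_\alpha\circ r_{s_0}=r_{s_0}\circ r_\alpha$, giving $r_\alpha(x)=r_{s_0}(r_\alpha(x))\in K_0$. In the full case, \ref{comutaralpha3} together with Lemma~\ref{l: retraction from up-directed subset}\ref{it:sigmaClosure} yields $r_{s_0}[K]\subset r_\alpha[K]$ as soon as $s_0\in(\Gamma\cap M_\alpha)_\sigma$, forcing $r_\alpha(x)=x\in K_0$ from that threshold onwards; for the earlier stages I intend to localize inside a metrizable subspace via a transfinite induction on $\w(r_\alpha[K])$ using the subskeleton structure of \ref{it:skeletonOnSubset}. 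Granted that $(r_\alpha(x))\subset (K_0,d)$, the conclusion then reduces to the following general observation: a transfinite sequence $(z_\alpha)_{\alpha\leq\kappa}$ in a compact metric space which is continuous at limits and converges to some $z^*$ has a countable jump set, for if $A_n:=\{\alpha<\kappa:d(z_\alpha,z_{\alpha+1})>1/n\}$ were uncountable, an $\omega_1$-subsequence $(\alpha_\xi)\subset A_n$ would have a supremum $\beta\leq\kappa$ at which $z_{\alpha_\xi},z_{\alpha_\xi+1}\to z_\beta$ (by \ref{it:upDirectedLimit} when $\beta<\kappa$, or by $z_\alpha\to z^*$ when $\beta=\kappa$), contradicting $d(z_{\alpha_\xi},z_{\alpha_\xi+1})>1/n$; countability of $\{r_\alpha(x):\alpha\leq\kappa\}$ then follows by routine transfinite bookkeeping, using continuity at limits and the fact that between consecutive jumps the sequence is constant. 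I expect the main difficulty to be the reduction to a metrizable ambient space in the full non-commutative case, where only the partial commutativity of \ref{comutaralpha3} is at our disposal.
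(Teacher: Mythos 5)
Your treatment of items \ref{it:comutative}--\ref{Model-compositionRalpha} and the first ``moreover'' clause matches the paper's proof essentially verbatim, including the observation that $\U(\alpha)\in M_{\alpha+1}$ for \ref{it:separatePoints}, the identification $r_\alpha=R_{\Gamma\cap M_\alpha}$ for \ref{comutaralpha3}, and the use of \cite[Lemma 5.2]{K20} for \ref{it:compositionWithRa}.

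The gap is in \ref{it:countableSupport2}, and it is exactly where you flag the difficulty. Your plan hinges on showing that the transfinite sequence $(r_\alpha(x))_{\alpha\le\kappa}$ lives in a single metrizable compact $K_0=r_{s_0}[K]$. That reduction works in the commutative case (where $r_{s_0}\circ r_\alpha=r_\alpha\circ r_{s_0}$), but in general it does not: the hypothesis governing \ref{it:countableSupport2} is only that $D\subset D(\mathfrak s)$ and $r_\alpha[D]\subset D$ for all $\alpha$, and there is no reason for the $r_\alpha(x)$ to lie in one $r_{s_0}[K]$. Even in the full case your proposed threshold argument fails, since $s_0$ need never belong to $(\Gamma\cap M_\alpha)_\sigma$ for $\alpha<\kappa$ (the models $M_\alpha$ have size $\le\max(\omega,|\alpha|)$, so there is no guarantee that $s_0\in M_\alpha$ for any $\alpha<\kappa$). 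The ``transfinite induction on $\w(r_\alpha[K])$'' you gesture at is not a concrete argument, and the metric-space lemma cannot be applied without the ambient metrizability.

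The paper's actual proof of \ref{it:countableSupport2} avoids this entirely by using the Fr\'echet--Urysohn property of $D(\mathfrak s)$ (hence countable tightness of $D$), via \cite[Theorem 32]{kubis09}. The argument: to show $\{r_\alpha(x):\alpha<\kappa\}$ is countable, it suffices to prove that every strictly increasing $\eta:[0,\omega_1)\to\kappa$ yields an eventually constant sequence $r_{\eta(\beta)}(x)$. Let $\xi:=\sup_\beta\eta(\beta)$; by \ref{it:upDirectedLimit}, $r_\xi(x)=\lim_\beta r_{\eta(\beta)}(x)$, and by the invariance hypothesis $r_\xi(x)\in D$. Countable tightness of $D$ then gives $\zeta<\omega_1$ with $r_\xi(x)\in r_{\eta(\zeta)}[K]$, and \ref{it:comutative} forces $r_{\eta(\beta)}(x)=r_{\eta(\beta)}(r_\xi(x))=r_\xi(x)$ for all $\beta\ge\zeta$. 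The countability of $\{\alpha<\kappa:r_\alpha(x)\ne r_{\alpha+1}(x)\}$ is then obtained not by the metric-space jump lemma but by showing $\alpha\mapsto r_\alpha(x)$ is injective on that set (apply $r_{\alpha+1}$ and use \ref{it:comutative}). This is the key tool you are missing; your general observation about jump sets in compact metric spaces is correct but cannot be invoked here.
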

\begin{proof}
Let $S$ and $\Phi$ be the union of sets and lists of formulas from the statements of Theorem~\ref{thm:canonicalRetraction} and Lemma~\ref{l:propertiesOfCanonicalRetractions}. Then the existence of $r_\alpha$, $\alpha\in [0,\kappa]$ follows from Theorem~\ref{thm:canonicalRetraction}. Now, \ref{it:comutative} follows immediately from Lemma~\ref{l:propertiesOfCanonicalRetractions}. \ref{it:separatePoints} and \ref{it:upDirectedLimit} follow from Lemma~\ref{l:propertiesOfCanonicalRetractions} as well (using for \ref{it:separatePoints} the fact that $\{\U(i)\colon i<\kappa\}\subset \bigcup_{\alpha<\kappa} M_\alpha$ and for \ref{it:upDirectedLimit} the fact that $\bigcup_{\beta<\alpha} M_{\eta(\beta)} = M_\xi$). \ref{it:identity} follows from \ref{it:separatePoints} and \ref{it:upDirectedLimit} applied to $\eta(i):=i$, $i<\kappa$ and $\xi = \kappa$. \ref{it:skeletonOnSubset} and \ref{Model-compositionRalpha} follow from Theorem~\ref{thm:canonicalRetraction}. 
For \ref{it:compositionWithRa} we observe that by Theorem~\ref{thm:canonicalRetraction} \ref{it:characterizationSecond} the net of continuous retractions $(r_s)_{s \in \Gamma \cap M_\alpha}$ converges pointwise to the continuous retraction $r_\alpha$ and so \cite[Lemma~5.2]{K20} ensures that the net $(f \circ r_s)_{s \in \Gamma \cap M_\alpha}$ converges in norm to $f \circ r_\alpha$, for every $f \in \C(K)$, which implies \ref{it:compositionWithRa}. For \ref{it:ordinalSeparatingPoints} we observe that by \ref{it:separatePoints} there is $i<\kappa$ such that $r_i(x)\neq r_i(y)$ so $\beta$ is well defined and if $\beta\neq 0$ then it is a successor ordinal by \ref{it:upDirectedLimit}. For \ref{it:secondFormula} is suffices to apply Lemma \ref{l:basics-in-M-2} \eqref{it:upDirected} and Theorem~\ref{thm:canonicalRetraction} \ref{it:characterizationSecond}. \ref{comutaralpha3} follows from Lemma~\ref{l: retraction from up-directed subset} \ref{it:sigmaClosure}.

Moreover, if $\mathfrak{s}$ is full then we obviously have $r_\alpha[D(\mathfrak{s})] = r_\alpha[K]\subset K=D(\mathfrak{s})$ and if it is commutative then Theorem~\ref{thm:canonicalRetraction} \ref{it:characterizationSecond} ensures that $r_\alpha[D(\mathfrak{s})]\subset D(\mathfrak{s})$.\\
\ref{it:countableSupport2}: Pick $x\in D$ and note that in order to prove that $\{r_\alpha(x)\colon \alpha<\kappa\}$ is countable, it suffices to show that for every strictly increasing function $\eta:[0,\omega_1)\to\kappa$, there is $\zeta<\omega_1$ with $r_{\eta(\zeta)}(x)=r_{\eta(\beta)}(x)$, for every $\zeta<\beta<\omega_1$. 
Let $\eta:[0,\omega_1)\to\kappa$ be a strictly increasing function and set $\xi:=\sup_{\beta<\omega_1} \eta(\beta)$. By \ref{it:upDirectedLimit}, we have $r_\xi(x) = \lim_{\beta < \omega_1} r_{\eta(\beta)}(x)$. Hence, since $r_\xi[D]\subset D$ and $D$ has countable tightness (see \cite[Theorem 32]{kubis09}), there is a $\zeta<\omega_1$ with $r_\xi(x) \in r_{\eta(\zeta)}[K]$; so, for $\zeta\leq\beta<\omega_1$, using \ref{it:comutative}, we obtain $r_{\eta(\beta)}(x) = r_{\eta(\beta)}(r_\xi(x)) = r_\xi(x)$. Finally, to conclude that the set $\{\alpha<\kappa: r_{\alpha}(x)\neq r_{\alpha+1}(x)\}$ is countable, note that the mapping $\varphi(\alpha)=r_\alpha(x)$ is an injection from this set into $\{r_\alpha(x)\colon \alpha<\kappa\}$. Indeed, suppose by contradiction that there exist $\alpha, \beta<\kappa$ with $\alpha \ne \beta$, $r_\alpha(x)\neq r_{\alpha+1}(x)$ and $r_\beta(x)\neq r_{\beta+1}(x)$ such that $r_\alpha(x)=r_\beta(x)$. Without loss of generality, we may assume that $\alpha<\beta$. Then applying the map $r_{\alpha+1}$, by \ref{it:comutative}, we obtain $r_{\alpha}(x)=r_{\alpha+1}(r_{\alpha}(x))= r_{\alpha+1}(r_\beta(x))=r_{\alpha+1}(x)$, which is a contradiction.
\end{proof}

\subsection{Application - passing to a subskeleton} Here we introduce the notion of a (weak) subskeleton and show that for a countable family of retractional skeletons inducing the same set there is a common weak subskeleton, see Theorem~\ref{thm:subskeletons}.

\begin{defin}\label{def:subskeleton}
    Let $K$ be a compact space and let $\mathfrak{s} = (r_s)_{s\in \Gamma}$ be a retractional skeleton on $K$. We say that $(r_s)_{s\in\Gamma'}$ is a \emph{subskeleton} of $\mathfrak{s}$, if $\Gamma'\subset \Gamma$ is a $\sigma$-closed and cofinal subset.
    \end{defin}
It is easy to see that every subskeleton is a retractional skeleton.

\begin{defin}\label{def:weakSubskeleton}
Let $(r_s)_{s \in \Gamma}$ be a retractional skeleton on a compact space $K$. We say that $(R_i)_{i\in\Lambda}$ is a \emph{weak subskeleton of $(r_s)_{s \in \Gamma}$} if $(R_i)_{i\in\Lambda}$ is a retractional skeleton on $K$ and there exists a mapping $\phi:\Lambda\to \Gamma$ such that
\begin{itemize}
    \item $R_i = r_{\phi(i)}$, for every $i\in\Lambda$;
    \item $\phi$ is $\omega$-monotone, that is, if $i,j\in\Lambda$ with $i\leq j$, then $\phi(i)\leq\phi(j)$ and if $(i_n)_{n\in\omega}$ is an increasing sequence from $\Lambda$, then $\sup_n \phi(i_n) = \phi(\sup_n i_n)$;
    \item $\{\phi(i)\colon i\in\Lambda\}$ is cofinal in $\Gamma$.
\end{itemize}
\end{defin}

Clearly, every subskeleton of a retractional skeleton is also a weak subskeleton. Some basic properties of weak subskeletons are summarized below.

\begin{fact}
Let $(r_s)_{s \in \Gamma}$ be a retractional skeleton on a compact space $K$ and $(R_i)_{i\in\Lambda}$ be a weak subskeleton of $(r_s)_{s \in \Gamma}$. Then
\begin{itemize}
    \item $(R_i)_{i\in\Lambda}$ induces the same subset as $(r_s)_{s\in\Gamma}$;
    \item if $(S_j)_{j\in\Delta}$ is a weak subskeleton of $(R_i)_{i\in\Lambda}$, then it is a weak subskeleton of $(r_s)_{s \in \Gamma}$.
\end{itemize}
\end{fact}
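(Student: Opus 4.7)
The proof is a direct verification from the definition of weak subskeleton, so my plan is essentially bookkeeping with the witnessing maps.

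For the first bullet, I would let $\phi:\Lambda\to\Gamma$ be the witnessing map for $(R_i)_{i\in\Lambda}$ and compare induced sets. One inclusion is immediate: $\bigcup_{i\in\Lambda}R_i[K]=\bigcup_{i\in\Lambda}r_{\phi(i)}[K]\subset\bigcup_{s\in\Gamma}r_s[K]$. For the reverse inclusion, given $s\in\Gamma$, cofinality of $\phi[\Lambda]$ in $\Gamma$ yields $i\in\Lambda$ with $\phi(i)\geq s$; then property (ii) of retractional skeletons gives $r_s=r_{\phi(i)}\circ r_s$, hence $r_s[K]\subset r_{\phi(i)}[K]=R_i[K]$. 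This gives the other inclusion, so the two skeletons induce the same set.

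For the second bullet, I would let $\phi:\Lambda\to\Gamma$ and $\psi:\Delta\to\Lambda$ be the two witnessing maps and propose $\phi\circ\psi:\Delta\to\Gamma$ as the witness that $(S_j)_{j\in\Delta}$ is a weak subskeleton of $(r_s)_{s\in\Gamma}$. The identification $S_j=R_{\psi(j)}=r_{\phi(\psi(j))}$ is immediate. For $\omega$-monotonicity: $j_1\leq j_2$ gives $\psi(j_1)\leq\psi(j_2)$ and then $\phi(\psi(j_1))\leq\phi(\psi(j_2))$, and for an increasing sequence $(j_n)_{n\in\omega}\subset\Delta$ with supremum $j_\infty$, two applications of the $\omega$-monotonicity of $\psi$ and $\phi$ give
\[
\phi\bigl(\psi(\sup_n j_n)\bigr)=\phi\bigl(\sup_n\psi(j_n)\bigr)=\sup_n\phi(\psi(j_n)).
\]
For cofinality: given $s\in\Gamma$, pick $i\in\Lambda$ with $\phi(i)\geq s$ (by cofinality of $\phi[\Lambda]$), then $j\in\Delta$ with $\psi(j)\geq i$ (by cofinality of $\psi[\Delta]$); monotonicity of $\phi$ then gives $\phi(\psi(j))\geq\phi(i)\geq s$.

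The argument involves no real obstacle; it is essentially a compatibility check, and I expect the only thing worth writing carefully is the reverse inclusion in the first bullet, where one must explicitly use the retractional identity $r_s=r_{\phi(i)}\circ r_s$ rather than merely the abstract cofinality. Everything else reduces to composing the two order-preserving maps $\phi$ and $\psi$ and observing that both required properties (pointwise monotonicity and preservation of countable suprema) pass through compositions.
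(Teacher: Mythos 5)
Your proof is correct, and the paper actually states this as a \emph{Fact} with no proof supplied, so there is no paper argument to compare against; your direct verification (the order relation $s\leq\phi(i)$ giving $r_s=r_{\phi(i)}\circ r_s$ for the nontrivial inclusion, and the composition $\phi\circ\psi$ inheriting $\omega$-monotonicity and cofinality) is exactly the routine bookkeeping the authors left implicit.
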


The following result shows that we may concentrate properties of countably many retractional skeletons into one skeleton, which is moreover generated by suitable models.

\begin{thm}\label{thm:subskeletons}
Let $K$ be a compact space and let $(r_s^n)_{s\in\Gamma_n}$, $n\in\omega$ be a sequence of retractional skeletons on $K$ inducing the same set $D$. Then there exists a retractional skeleton which is a weak subskeleton of $(r_s^n)_{s\in\Gamma_n}$, for every $n\in\omega$.

Moreover, for every countable set $S$ and every finite list of formulas $\Phi$, there exists a family $\M$ consisting of countable suitable models for $\Phi$ containing $S$ such that every $M\in\M$ admits canonical retraction $r_M$ associated to $M$, $K$ and $D$ and $(r_M)_{M\in\M}$ is a weak subskeleton of $(r_s^n)_{s\in\Gamma_n}$, for every $n\in\omega$, where the ordering on $\M$ is given by inclusion. 
\end{thm}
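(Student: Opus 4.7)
The plan is to prove the ``Moreover'' assertion; the unqualified first statement then follows by applying it to any countable $S$ and finite $\Phi$ (e.g., those from Theorem~\ref{thm:canonicalRetraction}). Given $S$ and $\Phi$, I would enrich them to
\[
S' := S \cup \{K, \tau, D\} \cup \{\langle \Gamma_n, \leq_n\rangle, r^n \colon n \in \omega\},
\]
and let $\Phi'$ be $\Phi$ joined with the finite lists of formulas from Theorem~\ref{thm:canonicalRetraction} and Lemma~\ref{l:propertiesOfCanonicalRetractions}. Let $\M$ be the family of all countable $M \prec (\Phi'; S')$, partially ordered by inclusion. For every $M \in \M$ Theorem~\ref{thm:canonicalRetraction} supplies the canonical retraction $r_M$ associated to $M$, $K$ and $D$; moreover, since $M$ is countable and contains each $(\Gamma_n, \leq_n, r^n)$, item~\ref{it:countableModelsAreInSkeleton} of that theorem forces
\[
r_M = r^n_{s_n(M)}, \quad\text{where}\quad s_n(M) := \sup(\Gamma_n \cap M) \in \Gamma_n
\]
(the supremum existing by $\sigma$-completeness of $\Gamma_n$ and Lemma~\ref{l:basics-in-M-2}~\eqref{it:upDirected}).

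Next I would verify that $(\M, \subset)$ is up-directed and $\sigma$-complete and that $(r_M)_{M \in \M}$ is a retractional skeleton on $K$. For up-directedness: given $M_1, M_2 \in \M$, apply Theorem~\ref{thm:modelExists} to the countable set $S' \cup M_1 \cup M_2$ to produce some $N \in \M$ with $N \supset M_1 \cup M_2$. For $\sigma$-completeness: for an increasing sequence $(M_k)_{k\in\omega}$ in $\M$, the union $\bigcup_k M_k$ is countable and belongs to $\M$ by Lemma~\ref{l:propertiesOfCanonicalRetractions}~\eqref{it:increasingSequence}. The compact set $r_M[K] = \overline{D \cap M}$ is metrizable by Theorem~\ref{thm:canonicalRetraction}~\ref{it:weight}; the compatibility relations $r_M \circ r_N = r_N \circ r_M = r_M$ whenever $M \subset N$ come from Lemma~\ref{l:propertiesOfCanonicalRetractions}~\eqref{it:commutativity}; sequential continuity at suprema follows from Lemma~\ref{l:propertiesOfCanonicalRetractions}~\eqref{it:increasingSequence} applied to the increasing sequence. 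To show $\lim_{M \in \M} r_M(x) = x$ for every $x \in K$, fix such an $x$ and an open $U \ni x$, pick an open $V$ with $x \in V \subset \overline{V} \subset U$, and use Theorem~\ref{thm:modelExists} to find $M_V \in \M$ with $V \in M_V$; for every $M \in \M$ with $M \supset M_V$ one has $V, V\cap D \in M$, and since $x \in \overline{V \cap D}$, Lemma~\ref{l:retraction}~\eqref{it: imageofclosure} gives $r_M(x) \in \overline{V \cap D \cap M} \subset U$.

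Finally, for each $n \in \omega$ define $\phi_n \colon \M \to \Gamma_n$ by $\phi_n(M) := s_n(M)$. The identity $r_M = r^n_{\phi_n(M)}$ was just established. Monotonicity of $\phi_n$ and preservation of countable suprema are immediate from the fact that $M \mapsto \Gamma_n \cap M$ respects inclusions and increasing unions; cofinality of $\phi_n[\M]$ in $\Gamma_n$ follows from, for each $s \in \Gamma_n$, invoking Theorem~\ref{thm:modelExists} on $S' \cup \{s\}$ to produce some $M \in \M$ containing $s$, whence $\phi_n(M) \geq s$. Thus $(r_M)_{M \in \M}$ is a weak subskeleton of $(r^n_s)_{s \in \Gamma_n}$ for every $n$. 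The conceptual point --- and what makes the suitable-models framework essential here --- is that one countable suitable model simultaneously captures retractions from all the given skeletons via the uniform formula $r_M = r^n_{\sup(\Gamma_n \cap M)}$; the only technical delicacy lies in the careful enrichment of $S$ and $\Phi$ so that the required absoluteness and closure properties hold uniformly across all countably many skeletons at once.
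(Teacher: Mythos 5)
Your proof mirrors the paper's in structure and in all the main technical moves (enriching $S$ and $\Phi$, using Theorem~\ref{thm:canonicalRetraction}~\ref{it:countableModelsAreInSkeleton} to obtain $r_M = r^n_{\sup(\Gamma_n \cap M)}$, Lemma~\ref{l:propertiesOfCanonicalRetractions} for commutativity and suprema, and Theorem~\ref{thm:modelExists} for cofinality and convergence to the identity). However, there is one genuine gap in the definition of $\M$.

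You declare $\M$ to be ``the family of \emph{all} countable $M \prec (\Phi'; S')$.'' That collection is not a set: for any set $u$ whatsoever, Theorem~\ref{thm:modelExists} produces a countable suitable model containing $S' \cup \{u\}$, so the collection ranges over a proper class. Since a retractional skeleton is indexed by an up-directed, $\sigma$-complete partially ordered \emph{set}, a proper class cannot serve as $\Gamma$; the definition simply does not apply, and phrases like ``$(\M,\subset)$ is $\sigma$-complete'' are not even well-formed. This is not a cosmetic issue that can be patched by Scott's trick alone, because the verification also relies on $\M$ being closed under the specific model-producing operation you invoke for up-directedness and cofinality.

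The paper resolves exactly this: it first applies Theorem~\ref{thm:modelExists} once to obtain a single \emph{set} $R \supset S' \cup \tau \cup \bigcup_n \Gamma_n$ such that $R \prec (\Phi'; S')$ and for every countable $Z \subset R$ there is a countable $M(Z) \subset R$ with $M(Z) \prec (\Phi'; Z)$; then $\M := \{M \in [R]^\omega : M \prec (\Phi'; S')\}$ is a bona fide set. All of your subsequent steps go through once you make this replacement: up-directedness uses $M(N_1 \cup N_2) \subset R$, cofinality for $s \in \Gamma_n$ uses $M(\{s\} \cup S') \subset R$ (which requires $\Gamma_n \subset R$), and the convergence $\lim_{M \in \M} r_M(x) = x$ uses $M(\{V\} \cup S') \subset R$ for basic open sets $V \in \tau \subset R$.
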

\begin{proof}Let $\Gamma_n = (\Gamma_n,\leq_n)$ and $r_n:\Gamma_n\to \C(K,K)$ be such that $r_n(s):=r_s^n$, for every $n\in\omega$ and $s\in\Gamma_n$. Let $\tau$ be the topology on $K$. Let $S'$ be the union of $S$ and the countable set from the statement of Theorem~\ref{thm:canonicalRetraction} enriched by $\{K,D,\tau,r_n,\Gamma_n,\leq_n\colon n\in\omega\}$ and let $\Phi'$ be the union of $\Phi$ and the list of formulas from the statement of Theorem~\ref{thm:canonicalRetraction}.
By Theorem~\ref{thm:modelExists}, there is a set $R\supset S'\cup \tau\cup \bigcup_{n\in\omega}\Gamma_n$ such that $R\prec (\Phi'; S')$ and for every countable set $Z\subset R$ there is a countable set $M(Z)\subset R$ satisfying $M(Z)\prec (\Phi';Z)$. Set
\[\M=\{M\in[R]^{\omega}\colon M\prec(\Phi',S')\},\]
ordered by inclusion. The $\sigma$-completeness of $\M$ follows from \cite[Lemma 2.4]{C12}. To see that $\M$ is up-directed, let $N_1,N_2\in \M$, then  the set $M(N_1\cup N_2)\in \M$ and satisfies $N_1\cup N_2\subset M(N_1\cup N_2)$.
By Theorem~\ref{thm:canonicalRetraction}, every $M\in\M$ admits canonical retraction $r_M$ associated to $M$, $K$ and $D$. 
Note that for every $U\in\tau$ there is $M\in\M$ with $U\in M$ (it suffices to put $M=M(\{U\}\cup S')$) and so $\tau\cap (\bigcup \M) = \tau$ is indeed a basis of the topology $\tau$. Therefore it follows from Theorem \ref{thm:canonicalRetraction} \ref{it:weight} and Lemma \ref{l:propertiesOfCanonicalRetractions} that $(r_M)_{M\in\M}$ is a retractional skeleton on $K$. 
Now for every $n\in\omega$, let $\phi_n:\M\to \Gamma_n$ be the mapping defined by $\phi_n(M):=\sup (\Gamma_n\cap M)$. By Theorem \ref{thm:canonicalRetraction} \ref{it:characterizationSecond}, we have that $r_M=r^n_{\phi_n(M)}$, for every $n \in \omega$. Now, let $(M_k)_{k\in\omega}\subset \M$ be an increasing sequence, then it is easy to see that $\sup_k \phi_n(M_k) = \phi_n(M_\infty)$, where $M_{\infty}=\bigcup_{k\in\omega}M_k$.
It remains to prove that the set $\{\phi_n(M):M\in\M\}$ is cofinal in $\Gamma_n$, for each $n\in\omega$. Let $n\in\omega$ and $s\in \Gamma_n$, then there exists $M\in\M$ such that $s\in M$ (it suffices to put $M=M(\{s\}\cup S')$). Therefore $\phi_n(M)=\sup(\Gamma_n\cap M)\geq s$. This concludes the proof.
\end{proof}

\section{Valdivia embedding of compact spaces admitting a commutative skeleton}\label{sec:valdivia}

Giving a compact space $K$ that admits a commutative retractional skeleton $\mathfrak{s}$, it is known that there exists a homeomorphic embedding $h:K\to [-1,1]^I$ such that $h[D(\mathfrak{s})]\subset \Sigma(I)$ (that is, $K$ is Valdivia and $D(\mathfrak{s})$ is a $\Sigma$-subset of $K$). The main aim of this section is to have a very concrete and very flexible way of understanding the mapping $h$, which is the topic handled in Subsection~\ref{subsec:valdiviaEmbedding}, where the proof of Theorem~\ref{thm:Intro3} is given. Apart from Theorem~\ref{thm:Intro3}, we would like to highlight Theorem~\ref{thm:valdiviaThroughModels} which gives a new characterization of Valdivia compacta using suitable models.

\subsection{Canonical retractions associated to suitable models in Valdivia compact spaces} The goal here is to obtain the following concrete description of the canonical retractions associated to suitable models in Valdivia compact spaces.

\begin{lemma}\label{l:canonicalRetractionInSigmaProduct}
For every suitable model $M$ the following holds: Let $D\subset \Sigma(I)$ be such that $K=\overline{D}$ is compact and $\{K,D,I,\tau\}\subset M$ (where $\tau$ is the topology on $K$). Then the mapping $r:K\to K$ defined as $r(x) = x|_{I\cap M}$, for every $x\in K$, is the canonical retraction associated to $M$, $K$ and $D$.
\end{lemma}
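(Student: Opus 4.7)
The plan is to identify the given map $r$ with the canonical retraction $r_M$ by applying the characterization in Theorem~\ref{thm:canonicalRetraction}~\ref{it:characterizationFirst} to the family of coordinate projections. First I would observe that $K$ is Valdivia because the identity embedding $K\hookrightarrow \er^I$ witnesses $K\cap\Sigma(I)\supset D$ as a dense $\Sigma$-subset; in particular $K$ admits a (commutative) retractional skeleton whose induced subset contains $D$, so the hypotheses of Theorem~\ref{thm:canonicalRetraction} are met and the canonical retraction $r_M$ associated to $M$, $K$, $D$ exists.

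For the family $\A$, I would take $\A := \{\pi_i\colon i\in I\}$, where $\pi_i\in\C(K)$ is the coordinate projection $\pi_i(x)=x(i)$. Clearly $\A$ separates the points of $K$. By Lemma~\ref{l:basics-in-M-2}~\eqref{it:Pi-in-M} the mapping $\pi\colon I\to\er^K$ given by $\pi(i)=\pi_i$ belongs to $M$, so by Lemma~\ref{l:basics-in-M}~\eqref{it: Dom-Rng-in-M} we have $\A=\rng\pi\in M$, and by Lemma~\ref{l:basics-in-M-2}~\eqref{it:one-to-one-imageOfM} applied to $\pi$ we obtain $\A\cap M = \pi[I\cap M] = \{\pi_i\colon i\in I\cap M\}$.

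Next I would describe the support structure of $\overline{D\cap M}$. Every $y\in D\cap M\subset\Sigma(I)\cap M$ satisfies $\suppt(y)\subset M\cap I$ by Lemma~\ref{l:basics-in-M-2}~\eqref{it:supportSubsetOfM}; since each $\pi_i$ is continuous, the vanishing property $y(i)=0$ for $i\in I\setminus M$ passes to pointwise limits, and hence persists on the closure $\overline{D\cap M}$.

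Finally, Theorem~\ref{thm:canonicalRetraction}~\ref{it:characterizationFirst} asserts that $r_M(x)$ is the unique element of $\overline{D\cap M}$ with $r_M(x)(i)=x(i)$ for every $i\in I\cap M$. Combined with the support property above, $r_M(x)$ must coincide with $x$ on $I\cap M$ and vanish off $I\cap M$; that is, $r_M(x)=x|_{I\cap M}=r(x)$. The one potential subtlety is making sure that the auxiliary objects used along the way (namely $I$, the indexing map $\pi$, and the resulting set $\A$) really live in $M$, but this is handled automatically by the absoluteness apparatus collected in Section~\ref{sec:prelim}; note in particular that no new countability hypothesis on $M$ is needed, as all the invoked lemmas from that section apply to arbitrary suitable models.
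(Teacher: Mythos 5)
Your argument is correct, and the overall framework matches the paper's: both treat the coordinate projections $\pi_i$ as the separating family, invoke Theorem~\ref{thm:canonicalRetraction} to get $r_M$ together with its characterization, and identify $r$ with $r_M$. The intermediate step is organized a little differently, though. The paper first proves (via Lemma~\ref{lem:restrictionToIndicesFromModels}, which itself rests on Lemma~\ref{l:abstraction}) the exact description $\overline{D\cap M}=\{x|_{I\cap M}\colon x\in K\}$, thereby establishing directly that $r$ is a well-defined continuous retraction onto $\overline{D\cap M}$, and then checks $f\circ r=f$ for $f\in\pi[I]\cap M$ to conclude $r=r_M$ by uniqueness. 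You instead argue in the opposite direction: you use only the one-sided inclusion that $\overline{D\cap M}$ is contained in the closed set of functions vanishing off $I\cap M$ (a direct consequence of Lemma~\ref{l:basics-in-M-2}~\eqref{it:supportSubsetOfM} plus coordinate-wise continuity), and then read off $r_M(x)=x|_{I\cap M}$ from Theorem~\ref{thm:canonicalRetraction}~\ref{it:characterizationFirst}. This is a small but genuine simplification --- you do not need the full strength of Lemma~\ref{lem:restrictionToIndicesFromModels} here, and well-definedness of $r$ (namely that $x|_{I\cap M}\in K$) comes for free from $r_M(x)\in K$ rather than having to be established separately. The paper's route, in exchange, records the explicit description of $\overline{D\cap M}$, which it reuses elsewhere.
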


This was in a certain sense most probably well-known for countable models (see e.g. \cite[Lemma 2.4]{KM06}), here we show that the situation is the same for uncountable models as well. The remainder of this subsection is more-or-less devoted to the proof of Lemma~\ref{l:canonicalRetractionInSigmaProduct}. We start with two preliminary results.

\begin{lemma}\label{lem:restrictionToIndicesFromModels}
For every suitable model $M$ the following holds: Let $X\subset\Sigma(I)$ be such that $\{X,I\}\subset M$. Then we have
\[\overline{X\cap M} = \overline{\{x|_{I\cap M}\colon x\in X\}}.\]
\end{lemma}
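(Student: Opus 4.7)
The plan is to prove both inclusions separately, where the nontrivial direction is the ``$\supseteq$'' inclusion, which is handled by a careful application of absoluteness combined with Lemma~\ref{l:basics-in-M-2} \eqref{it:supportSubsetOfM}.

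For the inclusion $\overline{X\cap M}\subset\overline{\{x|_{I\cap M}\colon x\in X\}}$: if $x\in X\cap M$, then by Lemma~\ref{l:basics-in-M-2} \eqref{it:supportSubsetOfM} we have $\suppt(x)\subset M\cap I$, which forces $x|_{I\cap M}=x$ (where the restriction is understood as the element of $\er^I$ vanishing outside $I\cap M$). Hence $X\cap M\subset \{x|_{I\cap M}\colon x\in X\}$ and the inclusion of closures follows.

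For the converse inclusion, I fix $x\in X$ and a basic open neighborhood $U$ of $x|_{I\cap M}$ of the form
\[U=\{y\in\er^I\colon y(i_k)\in J_k\text{ for }k=1,\dots,n\}\]
with $J_1,\dots,J_n$ open rational intervals. I split $\{i_1,\ldots,i_n\}=F_1\cup F_2$ with $F_1\subset M$ and $F_2\cap M=\emptyset$. The key observation is that for $i_k\in F_2$ we have $x|_{I\cap M}(i_k)=0$, so $0\in J_k$. By Lemma~\ref{l:basics-in-M} \eqref{finite-in-M}, $F_1\in M$, and one can encode the constraints on $F_1$ by a finite function $\xi\in\B^{<\omega}$ together with a bijection between some $n_1\in\omega$ and $F_1$; these objects lie in $M$ as in the proof of Lemma~\ref{l:abstraction}. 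Enriching $\Phi$ by a formula expressing the existence of some $y\in X$ satisfying the $F_1$-constraints, Lemma~\ref{l:unique-M} yields $y\in X\cap M$ with $y(i_k)\in J_k$ for each $i_k\in F_1$. Then Lemma~\ref{l:basics-in-M-2} \eqref{it:supportSubsetOfM} forces $\suppt(y)\subset M$, so $y(i_k)=0\in J_k$ for $i_k\in F_2$ as well. Thus $y\in U\cap X\cap M$, which proves $x|_{I\cap M}\in\overline{X\cap M}$.

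The main obstacle will be handling the coordinates outside $M$: one cannot force absoluteness for constraints involving indices not in the model. The trick is to observe that the coordinates of $x|_{I\cap M}$ outside $I\cap M$ are automatically zero, and that any witness $y\in X\cap M$ produced via Lemma~\ref{l:unique-M} has its support inside $M$ by Lemma~\ref{l:basics-in-M-2} \eqref{it:supportSubsetOfM}, so those outside coordinates are automatically matched. This is exactly where the hypothesis $X\subset\Sigma(I)$ (as opposed to $\er^I$) is essential.
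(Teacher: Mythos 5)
Your proof is correct and rests on the same two key ideas as the paper's: absoluteness produces a witness in $X\cap M$ matching finitely many coordinate constraints on $I\cap M$, while Lemma~\ref{l:basics-in-M-2}~\eqref{it:supportSubsetOfM} guarantees the coordinates outside $M$ are automatically zero. The only difference is one of packaging: you re-derive the absoluteness argument directly for $\er^I$ (handling the $F_1$- and $F_2$-coordinates separately), whereas the paper invokes Lemma~\ref{l:abstraction} for the abstract density statement in $\er^{\A}$ and then pushes it forward into $\er^I$ via the continuous zero-extension map $\phi$, reserving the support observation only for the identification $x|_{I\cap M}=x$ on $X\cap M$.
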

\begin{proof}
Let $S$ and $\Phi$ be the union of sets and lists of formulas from the statements of Lemma~\ref{l:basics-in-M}, Lemma~\ref{l:basics-in-M-2} and Lemma~\ref{l:abstraction}
and let $M\prec (\Phi; S\cup\{X,I\})$. Let $\pi:I\to \er^X$ be the mapping given by $\pi_i(x)=x(i)$, for every $x\in X$ and $i\in I$. By Lemma~\ref{l:basics-in-M} \eqref{it: Dom-Rng-in-M} and Lemma~\ref{l:basics-in-M-2} \eqref{it:Pi-in-M}, we have that $\pi\in M$ and $\A:=\pi[I\cap M]\subset M$. Let $q_M:X\to \er^{\A}$ be the mapping from Lemma~\ref{l:abstraction}, that is, for $x\in X$ we have $q_M(x)(\pi_i) = x(i)$, $i\in I\cap M$. 
Consider the mapping $\phi:\er^{\A}\to \er^I$ given for $x\in \er^{\A}$ by $\phi(x)(i):=x(\pi_i)$, if $i\in I\cap M$ and $\phi(x)(i):=0$, if $i\in I\setminus M$. It is easy to see that $\phi$ is continuous. 
By Lemma~\ref{l:abstraction}, we have $q_M[X]\subset \overline{q_M[X\cap M]}$ which implies that \[\{x|_{I\cap M}\colon x\in X\} = \phi(q_M[X])\subset \overline{\phi(q_M[X\cap M])} = \overline{\{x|_{I\cap M}\colon x\in X\cap M\}}.\] 
Thus, it suffices to note that for every $x\in X\cap M$ we have $x|_{I\cap M} = x$, which follows from the fact that the support of every $x\in X\cap M$ is contained in $M$, see Lemma~\ref{l:basics-in-M-2} \eqref{it:supportSubsetOfM}.
\end{proof}

The following is well-known. We did not find a suitable reference, but it follows e.g. from the proof of \cite[Theorem 6.1]{KM06} (for the key step see also \cite[Lemma 1.2]{AMN88}). For the convenience of the reader we show a short argument based on our previous considerations. 

\begin{lemma}\label{lem:existenceOfTheSkeleton}
Let $K\subset \er^I$ be a compact space and let $D:=\Sigma(I)\cap K$ be dense in $K$. Put
\[\Gamma:=\{A\in [I]^{\leq \omega}\colon x|_A\in K\text{ for every }x\in K\}\]
and for every $A\in\Gamma$ define $r_A:K\to K$ by $r_A(x):=x|_A$, $x\in K$. Then $(r_A)_{A\in\Gamma}$ is a commutative retractional skeleton on $K$ inducing the set $D$.
\end{lemma}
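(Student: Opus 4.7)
The plan is first to establish the key structural fact that for every countable $A_0 \subset I$ there exists a countable $A \in \Gamma$ with $A_0 \subset A$; granted this, the remaining axioms of a commutative retractional skeleton reduce to direct product-topology computations.

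For the key fact, I would invoke Theorem~\ref{thm:modelExists} to pick a countable suitable model $M$ for the finite list of formulas from Lemma~\ref{lem:restrictionToIndicesFromModels} that contains $\{K, D, I\} \cup A_0$. Since $A_0 \in M$ is countable, Lemma~\ref{l:basics-in-M}\eqref{belongIsContained} gives $A_0 \subset M$, so $A := I \cap M$ is a countable subset of $I$ containing $A_0$. The assignment $r_A : \er^I \to \er^I$ is continuous (being coordinatewise defined), so applying Lemma~\ref{lem:restrictionToIndicesFromModels} to $X := D \subset \Sigma(I)$ yields
\[ r_A(K) = r_A(\overline{D}) \subset \overline{r_A(D)} = \overline{\{x|_{I\cap M}\colon x \in D\}} = \overline{D \cap M} \subset \overline{D} = K, \]
which shows $A \in \Gamma$.

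With this in hand the axioms follow routinely. Up-directedness of $(\Gamma, \subset)$ is immediate from the key fact applied to $A \cup B$. For $\sigma$-completeness together with axiom (iii), if $(A_n)$ is an increasing sequence in $\Gamma$, then $A := \bigcup_n A_n$ is countable and $x|_{A_n}$ converges coordinatewise to $x|_A$ in $\er^I$, so $x|_A \in K$ by closedness and $A \in \Gamma$ with $r_A(x) = \lim_n r_{A_n}(x)$. Each $r_A$ is a continuous retraction by inspection; $r_A[K]$ is metrizable because the genuine restriction $y \mapsto y|_A$ embeds it homeomorphically into the metrizable space $\er^A$. The pointwise identity $(x|_B)|_A = x|_{A \cap B}$ gives $r_A \circ r_B = r_B \circ r_A$, which simultaneously yields commutativity and, when $A \subset B$, axiom (ii).

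For axiom (iv), a basic neighbourhood of $x$ in the product topology depends on only finitely many coordinates $F \subset I$; by the key fact there is $A_0 \in \Gamma$ with $F \subset A_0$, and then $r_B(x)(i) = x(i)$ for every $i \in F$ and every $B \in \Gamma$ with $B \supset A_0$. Finally, each $r_A(x)$ is supported in the countable set $A$, so $\bigcup_{A \in \Gamma} r_A[K] \subset \Sigma(I) \cap K = D$; conversely, any $x \in D$ has countable support contained in some $A \in \Gamma$ by the key fact, whence $r_A(x) = x$. I expect the only substantive obstacle to be the key fact itself, and Lemma~\ref{lem:restrictionToIndicesFromModels} is precisely calibrated to deliver $A = I \cap M$ in one stroke, bypassing the inductive construction of Argyros--Mercourakis--Negrepontis.
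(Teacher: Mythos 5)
Your proof is correct and follows essentially the same route as the paper's: the crucial point is the cofinality of $\Gamma$ in $[I]^{\leq\omega}$, obtained exactly as you do by applying Theorem~\ref{thm:modelExists} and Lemma~\ref{lem:restrictionToIndicesFromModels} with $X=D$ to a countable suitable model containing the given countable set, and the remaining axioms are then routine product-topology checks. The only cosmetic differences are that you verify up-directedness and the reverse inclusion $\bigcup_A r_A[K]\subset D$ explicitly, whereas the paper leaves these implicit, and that your chain $r_A(K)=r_A(\overline D)\subset\overline{r_A(D)}=\overline{D\cap M}\subset K$ is spelled out rather than merely asserted.
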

\begin{proof}
It is obvious that each $r_A$ is a continuous retraction with $r_A[K]$ metrizable. For every $A,B\in\Gamma$, we have $A\cap B\in\Gamma$ and $r_A\circ r_B = r_{A\cap B}$ which implies that $r_A\circ r_B = r_B\circ r_A$. Having an increasing sequence $(A_n)_{n\in\omega}$ from $\Gamma$ and $x\in K$, we have $r_{A_n}x\to x|_{\bigcup A_n}$ and so $A_\infty:=\bigcup A_n\in\Gamma$ and $r_{A_n}x\to r_{A_\infty}x$. Let us now observe that for every $x\in D$ there is $A\in\Gamma$ with $r_Ax = x$. Indeed, any $x\in D$ has a countable support, so it suffices to see that for every countable $E\subset I$ there is $A\in\Gamma$ with $E\subset A$. Indeed, by Theorem~\ref{thm:modelExists} and Lemma~\ref{lem:restrictionToIndicesFromModels} (applied to $X=D$), there exists a countable set $M$ such that $E\subset M\cap I$ and $\overline{\{x|_{I\cap M}\colon x\in D\}}=\overline{D\cap M}$, which implies that $M\cap I\in \Gamma$.
Finally, note that the cofinality of $\Gamma$ in $[I]^{\leq \omega}$ implies that the net $(r_A)_{A \in \Gamma}$ converges pointwise to the identity in $K$.
Thus, $\Gamma$ is cofinal and $\sigma$-closed in $[I]^{\leq \omega}$ (in particular $\Gamma$ is a $\sigma$-complete up-directed set) and $(r_A)_{A\in\Gamma}$ is a commutative retractional skeleton on $K$ inducing the the set $D$.
\end{proof}

\begin{proof}[Proof of Lemma~\ref{l:canonicalRetractionInSigmaProduct}]
Let $S$ and $\Phi$ be the union of the countable sets and finite lists of formulas from the statements of Lemma~\ref{l:basics-in-M}, Lemma~\ref{l:basics-in-M-2}, Theorem~\ref{thm:canonicalRetraction} and Lemma~\ref{lem:restrictionToIndicesFromModels}. Pick $M\prec (\Phi; S\cup \{K,D,I,\tau\})$. Since $D$ is dense in $K$ and contained in the set induced by a retractional skeleton (see e.g. Lemma~\ref{lem:existenceOfTheSkeleton}), by Theorem~\ref{thm:canonicalRetraction}, $M$ admits canonical retraction $r_M$ associated to $M$, $K$ and $D$.

By Lemma~\ref{lem:restrictionToIndicesFromModels}, using the continuity of the mapping $K\ni x\mapsto x|_{I\cap M}$ and compactness of $K$, we have $\overline{D\cap M} = \{x|_{I\cap M}\colon x\in K\}$ and so the retraction $r$ is well-defined, continuous and $r[K]=\overline{D\cap M}$.
Let $\pi:I\to {\er^K}$ be the mapping given for $i\in I$ and $x\in K$ as $\pi(i)(x):=x(i)$. By Lemma~\ref{l:basics-in-M-2} \eqref{it:one-to-one-imageOfM} and \eqref{it:Pi-in-M}, we have $\pi\in M$ and $\pi[I]\cap M = \pi[I\cap M]$. Since we obviously have $f\circ r = f$, for every $f\in \pi[I\cap M] = \pi[I]\cap M$ and $\pi[I]\in M$ separates the points of $K$, using Theorem~\ref{thm:canonicalRetraction} we obtain that $r=r_M$.
\end{proof}

\subsection{Valdivia embedding}\label{subsec:valdiviaEmbedding}

This subsection is devoted to the proof of  Theorem~\ref{thm:Intro3} whose proof is based on the proof of \cite[Theorem 2.6]{CGR17}. Quite surprisingly, the inductive argument does not give us only the ``Valdivia embedding'' (that is, Theorem~\ref{thm:Intro3} \ref{it:inducedByCommutative}$\Rightarrow$\ref{it:sigmaEmbedding}), but it also provides us with a new characterization of Valdivia compacta (that is, Theorem~\ref{thm:Intro3} \ref{it:inducedByCommutative}$\Leftrightarrow$\ref{it:skeletonInvariantSubset}) which we use later. Let us also highlight that there is an analogy of this new characterization in the language of suitable models, see Theorem~\ref{thm:valdiviaThroughModels}. We start with a lemma.

\begin{lemma}\label{lem:subsetSeparating}
Let $K$ be a compact space and let $\A\subset \C(K)$ be a set separating the points of $K$. Then there exists $\A'\subset\A$ with $|\A'|=\w(K)$ which separates the points of $K$.
\end{lemma}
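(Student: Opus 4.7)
The plan is to reduce the statement to a standard Stone–Weierstrass approximation argument. Set $\kappa := \w(K)$; I will handle the nontrivial case in which $\kappa$ is infinite, since if $K$ is finite any subset of $\A$ that still separates points will do. The main classical ingredient I will invoke is the equality $d(\C(K)) = \w(K)$ for infinite compact Hausdorff $K$, which lets me fix a dense subset $\{f_\alpha : \alpha < \kappa\}$ of $\C(K)$ in the supremum norm.

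Next, because $\A$ separates the points of $K$, the Stone–Weierstrass theorem yields $\overline{\alg(\A \cup \{1\})} = \C(K)$. For each $\alpha < \kappa$ I would pick a sequence of algebraic combinations from $\A \cup \{1\}$ converging to $f_\alpha$ and collect the (finitely many) elements of $\A$ appearing in each term, producing a countable subset $\A_\alpha \subset \A$ with $f_\alpha \in \overline{\alg(\A_\alpha \cup \{1\})}$.

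Setting $\A'' := \bigcup_{\alpha < \kappa} \A_\alpha$ gives $|\A''| \leq \kappa \cdot \aleph_0 = \kappa$. The closed subalgebra $\overline{\alg(\A'' \cup \{1\})}$ then contains the dense family $\{f_\alpha\}$ and therefore coincides with $\C(K)$; in particular it separates the points of $K$. Since the constant function $1$ separates no pair of points, the family $\A''$ itself must already separate them. To achieve the exact cardinality $|\A'| = \kappa$ demanded by the statement, I would enlarge $\A''$ by arbitrary further elements of $\A$, using that $|\A| \geq \kappa$: the evaluation embedding $K \hookrightarrow \er^{\A}$ realises $K$ as a subspace of a product of weight $|\A|$, forcing $\w(K) \leq |\A|$.

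I do not foresee any genuine obstacle. The only point requiring a little care is the extraction of the countable sets $\A_\alpha$, which is a routine consequence of the definition of the closed algebraic hull; and the verification that separation by $\overline{\alg(\A'' \cup \{1\})}$ transfers back to separation by $\A''$ alone, which is immediate since any pair identified by $\A''$ is also identified by the whole closed algebra it generates together with $1$.
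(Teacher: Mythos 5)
Your proof is correct, but it follows a somewhat different route than the paper. The paper observes that the sets $f^{-1}(-1/2,1/2)$ with $f\in\operatorname{alg}(\A\cup\{1\})$ form a basis of $K$ (a direct consequence of Stone--Weierstrass plus Urysohn), then invokes Engelking's refinement theorem to extract a subfamily $\F\subset\operatorname{alg}(\A\cup\{1\})$ of cardinality $\w(K)$ whose preimages still form a basis; since every element of $\F$ is literally a polynomial in finitely many members of $\A\cup\{1\}$, this yields $\A'\subset\A$ of cardinality $\w(K)$ immediately, with no countable-support extraction needed. You instead invoke the classical identity $d(\C(K))=\w(K)$, pick a norm-dense family $\{f_\alpha:\alpha<\kappa\}$ in $\C(K)$, and for each $\alpha$ extract a countable $\A_\alpha\subset\A$ supporting an approximating sequence from the algebra. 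Both routes hinge on the same Stone--Weierstrass core and the same reverse implication (points not separated by a subfamily are not separated by the closed algebra it generates). The paper's version is slightly more self-contained because it does not appeal to the identity $d(\C(K))=\w(K)$ as a black box (that identity is itself usually proved by essentially the basis argument the paper uses), and because the polynomial representation gives finite rather than countable supports per approximant; your version makes the dependence on the density character of $\C(K)$ explicit, which some readers may find clearer. One small caveat common to both arguments (and to the lemma as stated): the inequality $\w(K)\le|\A|$ used to pad $\A''$ up to exactly cardinality $\w(K)$ only holds when $\A$ is infinite (for finite $\A$ the product $\er^{\A}$ has weight $\omega$, not $|\A|$). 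This is a harmless edge case — for the lemma's application it is clear that one only needs $|\A'|\le\w(K)$ — but you should phrase the weight estimate as $\w(K)\le\max(\omega,|\A|)$ to be precise.
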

\begin{proof}
First, since by the Stone-Weierstrass theorem $\operatorname{alg}(\A \cup \{1\})$ is dense in $\C(K)$, we easily observe that $\{f^{-1}(-1/2,1/2)\colon f\in\operatorname{alg}(\A \cup \{1\})\}$ is a basis for the topology of $K$. Thus, by \cite[Theorem 1.1.15]{Eng}, there is $\F\subset \operatorname{alg}(\A \cup \{1\})$ with $|\F| = \w(K)$ such that $\{f^{-1}(-1/2,1/2)\colon f\in\F\}$ is a basis for the topology of $K$. Pick $\A'\subset \A$ such that $|\A'| = \w(K)$ and $\F\subset \operatorname{alg}(\A' \cup \{1\})$. Then $\A'$ separates the points of $K$, because otherwise $\operatorname{alg}(\A' \cup \{1\})$ would not separate the points of $K$, a contradiction with the fact that $\{f^{-1}(-1/2,1/2)\colon f\in\F\}$ is a basis of the topology.
\end{proof}

\begin{proof}[Proof of Theorem~\ref{thm:Intro3}]
    \ref{it:inducedByCommutative}$\Rightarrow$\ref{it:subsetOfComutativeInducedSubset} Let $\mathfrak{s}_2$ be the commutative retractional skeleton on $K$ inducing $D(\mathfrak{s})$. Then by Theorem~\ref{thm:subskeletons} there is a weak subskeleton of both $\mathfrak{s}$ and $\mathfrak{s_2}$, which easily implies that there is a cofinal subset $\Gamma''\subset \Gamma$ such that $r_s\circ r_t = r_t\circ r_s$, for every $s,t\in\Gamma''$. Thus, it suffices to let $\Gamma'=(\Gamma'')_{\sigma}$. \\
    \ref{it:subsetOfComutativeInducedSubset}$\Rightarrow$\ref{it:skeletonInvariantSubset}: Let $\mathfrak{s}_2=(r_s)_{s\in\Gamma'}$ be a commutative subskeleton of $\mathfrak{s}$. Pick an up-directed set $\Gamma''\subset \Gamma'$ and $x\in D(\mathfrak{s}) = D(\mathfrak{s}_2)$. By Lemma~\ref{l: retraction from up-directed subset} the limit $\lim_{s\in\Gamma''}r_s(x)$ exists. Since there exists $s_0\in\Gamma'$ such that $x = r_{s_0} x$, using the commutativity we obtain
    \[\lim_{s\in\Gamma''}r_s(x) = \lim_{s\in\Gamma''}r_s(r_{s_0}x) = r_{s_0}(\lim_{s\in\Gamma''}r_s(x)) \in D(\mathfrak{s}).\]
    Thus, $\mathfrak{s}_2$ satisfies \ref{it:skeletonInvariantSubset} with $D=D(\mathfrak{s})$.\\
    \ref{it:skeletonInvariantSubset}$\Rightarrow$\ref{it:sigmaEmbedding}: First, we may without loss of generality assume that $\mathfrak{s}_2 = \mathfrak{s}$. We will prove the result by induction on $\kappa:=\w(K)$. We may without loss of generality assume that $\lambda = 1$. If $\kappa=\omega$, then by Lemma~\ref{lem:subsetSeparating}, there exists a countable set $\HH\subset \A$ which separates the points of $K$ and this set does the job.
    So let us assume that the result holds for every compact space of weight strictly smaller than $\kappa$. Proposition \ref{prop:rri} together with Theorem~\ref{thm:modelExists} imply the existence of sets $(M_\alpha)_{\alpha \le \kappa}$ satisfying \ref{it:areModels}-\ref{it:areContinuous} and retractions $(r_\alpha)_{\alpha \le \kappa}$ satisfying \ref{it:comutative}-\ref{it:countableSupport2}. Note that using \ref{it:secondFormula}, we obtain that $r_\alpha[D] \subset D$. For every $\alpha < \kappa$, define $\A_\alpha:=\{f\in\C(r_\alpha[K])\colon f\circ r_\alpha\in\A\}$. It is easy to see that, for every $\alpha < \kappa$, the set $\A_\alpha$ is symmetric, closed, convex and bounded. The fact that $\A_\alpha$ separates the points of $r_\alpha[K]$ follows from \ref{it:compositionWithRa} and \ref{comutaralpha3} implies that $f \circ r_s \in \A_\alpha$, for every $f \in \A_\alpha$ and every $s \in (\Gamma \cap M_\alpha)_\sigma$. For every $\alpha < \kappa$, define $D_\alpha:=D \cap r_\alpha[K]\subset D(\mathfrak s) \cap r_\alpha[K]$. Since $r_{\alpha}[D]$ is dense in $r_{\alpha}[K]$ and $r_{\alpha}[D]\subset  D_{\alpha}$, we have that $D_\alpha$  is dense in $r_\alpha[K]$. 
    Therefore the induction hypothesis and \ref{it:skeletonOnSubset} imply that there are sets $T_\alpha\subset \A_{\alpha}$ such that the mapping $\varphi_\alpha:r_\alpha[K]\to [-1,1]^{T_\alpha}$ given by $\varphi_\alpha(x)(t):=t(x)$, $t\in T_{\alpha}$ and $x \in r_\alpha[K]$ is a homeomorphic embedding and $\varphi_\alpha[D_\alpha]\subset \Sigma(T_\alpha)$, for every $\alpha<\kappa$. We may without loss of generality assume that $T_\alpha\cap T_\beta = \emptyset$ for $\alpha\neq \beta$. Now, we put $T=T_0\cup \bigcup_{\alpha<\kappa} T_{\alpha+1}$ and define $\varphi:K\to [-1,1]^T$ by
    \[
\varphi(x)(t):=\begin{cases}
\tfrac{1}{2}\big(\varphi_{\alpha+1}(r_{\alpha+1}(x))(t) - \varphi_{\alpha+1}(r_{\alpha}(x))(t)\big), & t\in T_{\alpha+1},\\
\varphi_0(r_0(x))(t), & t\in T_0.
\end{cases}
\]
Then $\varphi$ is of course continuous. Let us verify that it is one-to-one. Indeed, if $x, y$ are distinct points from $K$ then by \ref{it:ordinalSeparatingPoints} there is a minimal ordinal $\alpha_0<\kappa$ for which $r_{\alpha_0}(x)\neq r_{\alpha_0}(y)$ and $\alpha_0=0$ or it is a successor ordinal. If $\alpha_0=0$, then there exists $t \in T_0$ such that $\varphi_0\big(r_0(x)\big)(t) \ne \varphi_0\big(r_0(y)\big)(t)$ and so we have $\varphi(x)(t) \ne \varphi(y)(t)$. Otherwise, $\alpha_0=\beta_0+1$ for some $\beta_0<\kappa$ and there is $t\in T_{\alpha_0}$ such that $\varphi_{\alpha_0}(r_{\alpha_0}(x))(t)\neq \varphi_{\alpha_0}(r_{\alpha_0}(y))(t)$. Moreover, since $\alpha_0$ is minimal, we have $r_{\beta_0}(x)=r_{\beta_0}(y)$, hence we obtain $\varphi(x)(t) \neq \varphi(y)(t)$ and so $\varphi(x)\neq \varphi(y)$.
Thus, $\varphi$ is a homeomorphic embedding.

Let us show that $\varphi[D(\mathfrak{s})]\subset \Sigma(T)$. Indeed, by \ref{it:countableSupport2}, for every $x\in D$ the set $\{\alpha<\kappa\colon r_{\alpha+1}(x)\neq r_\alpha(x)\}$ is countable. Moreover, since $r_\alpha[D] \subset D_\alpha$, the induction hypothesis ensures that the supports of $\varphi_0\big(r_0(x)\big)$, $\varphi_{\alpha+1}(r_{\alpha+1}(x))$ and $\varphi_{\alpha+1}(r_{\alpha}(x))$ are countable. Therefore the support of $\varphi(x)$ is countable and we obtain $\varphi[D]\subset \Sigma(T)$. Moreover, since $D$ is dense in $K$, by Lemma~\ref{lem:existenceOfTheSkeleton} there is a commutative retractional skeleton $\mathfrak{s}_2$ on $\varphi[K]$ such that $D(\mathfrak{s}_2) = \varphi[K]\cap \Sigma(T)$. Since $\varphi[D(\mathfrak{s})]\cap D(\mathfrak{s}_2)\supset \varphi[D]$, by \cite[Lemma 3.2]{C14} we have that $\varphi[D(\mathfrak{s})] = D(\mathfrak{s}_2)\subset \Sigma(T)$.

Now, let us show that $\pi_t\circ \varphi\in\A$, for every $t\in T$. Firstly, note that $\pi_t\circ \varphi_\alpha\in \A_\alpha$, for every $t\in T_\alpha$ and every $\alpha < \kappa$. If $t\in T_0$, then we have that $\pi_t\circ \varphi = \pi_t\circ \varphi_0\circ r_0\in\A$. Pick $\alpha<\kappa$ and $t\in T_{\alpha+1}$. Then, similarly as above, $\pi_t\circ \varphi_{\alpha+1}\circ r_{\alpha+1}\in \A$ and therefore using \ref{it:compositionWithRa}, we obtain
\[
\pi_t\circ \varphi = \tfrac{1}{2}\big(\pi_t\circ \varphi_{\alpha+1}\circ r_{\alpha+1} - \pi_t\circ \varphi_{\alpha+1}\circ r_{\alpha+1}\circ r_\alpha\big)\in \A.
\]
Omitting some indices, we may without loss of generality assume that the mapping $T\ni t\mapsto f_t:=\pi_t\circ \varphi\in \A$ is one-to-one and so $\HH:=\{f_t\colon t\in T\}$ does the job.
\\
    \ref{it:sigmaEmbedding}$\Rightarrow$\ref{it:inducedByCommutative}: By Lemma~\ref{lem:existenceOfTheSkeleton} there is a commutative retractional skeleton $\mathfrak{s}_2$ on $\varphi[K]$ such that $D(\mathfrak{s}_2) = \varphi[K]\cap \Sigma(I)$. Since $\varphi[D(\mathfrak{s})]\subset D(\mathfrak{s}_2)$, by \cite[Lemma 3.2]{C14} we have that $\varphi[D(\mathfrak{s})] = D(\mathfrak{s}_2)$ and so the set $D(\mathfrak{s})$ is induced by a commutative retractional skeleton.\\
    \ref{it:skeletonInvariantSubset}$\Rightarrow$\ref{it:inducedByCommutative}: follows from \ref{it:skeletonInvariantSubset}$\Rightarrow$\ref{it:sigmaEmbedding}$\Rightarrow$\ref{it:inducedByCommutative} applied to the set $\A:=B_{\C(K)}$.
    \end{proof}

    The following corollary might be well-known, but let us mention it for future reference.
    
    \begin{cor}\label{cor:fullIsCommutative}
    Let $K$ be a compact space and let $\mathfrak{s}$ be a full retractional skeleton on $K$. Then there exists a commutative subskeleton of $\mathfrak{s}$.
    \end{cor}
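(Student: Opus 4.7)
The plan is to simply verify that condition \ref{it:skeletonInvariantSubset} of Theorem~\ref{thm:Intro3} holds trivially in the full case, and then invoke the implication \ref{it:skeletonInvariantSubset}$\Rightarrow$\ref{it:subsetOfComutativeInducedSubset} which is already established in that theorem.

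More concretely, assume $\mathfrak{s} = (r_s)_{s\in\Gamma}$ is full, so $D(\mathfrak{s}) = K$. I would take $\mathfrak{s}_2 = \mathfrak{s}$ itself, which is a subskeleton of $\mathfrak{s}$ since $\Gamma$ is cofinal and $\sigma$-closed in itself, and take $D := K$, which is trivially dense in $K$ and contained in $D(\mathfrak{s})$. For any up-directed subset $\Gamma'' \subset \Gamma$ and any $x \in D = K$, Lemma~\ref{l: retraction from up-directed subset} guarantees that the limit $\lim_{s\in\Gamma''} r_s(x)$ exists and belongs to $K = D$. Hence condition \ref{it:skeletonInvariantSubset} of Theorem~\ref{thm:Intro3} is satisfied.

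Applying the implication \ref{it:skeletonInvariantSubset}$\Rightarrow$\ref{it:subsetOfComutativeInducedSubset} of Theorem~\ref{thm:Intro3} then yields a commutative subskeleton of $\mathfrak{s}$, completing the proof. There is no real obstacle here: the whole content of the corollary sits in the already proved equivalences of Theorem~\ref{thm:Intro3}, and the only thing to check is the trivial observation that in the full case the ambient set $K$ itself witnesses the invariance condition.
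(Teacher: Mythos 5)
Your proof is correct and is exactly the paper's argument: the paper's own proof is the one-liner ``Apply Theorem~\ref{thm:Intro3} \ref{it:skeletonInvariantSubset}$\Rightarrow$\ref{it:subsetOfComutativeInducedSubset} to $D:=K$,'' and you have simply spelled out why the hypotheses of \ref{it:skeletonInvariantSubset} are trivially met when $D(\mathfrak{s})=K$.
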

    \begin{proof}
    Apply Theorem~\ref{thm:Intro3} \ref{it:skeletonInvariantSubset}$\implies$\ref{it:subsetOfComutativeInducedSubset} to $D:=K$.
    \end{proof}
    
    The proof of Theorem~\ref{thm:Intro3} gives us also the following.
    
    \begin{thm}\label{thm:valdiviaThroughModels}
    Let $K$ be a compact space and let $D$ be a set induced by a retractional skeleton on $K$. Then the following are equivalent.
    \begin{enumerate}[label = (\alph*)]
        \item\label{it:DinducedByCommutative} $D$ is induced by a commutative retractional skeleton.
        \item\label{it:equivUsingSuitableModels} For every suitable model $M$ the following holds:
        \[\forall x\in D\; \exists y\in D\cap \overline{D\cap M}\; \forall f\in\C(K)\cap M:\quad f(x)=f(y).\]
    \end{enumerate}
    \end{thm}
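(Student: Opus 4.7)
The plan is to identify condition \ref{it:equivUsingSuitableModels} with the statement that the canonical retractions of Theorem~\ref{thm:canonicalRetraction} preserve $D$. Indeed, for every sufficiently suitable model $M$, the canonical retraction $r_M$ associated to $M$, $K$ and $D$ exists and is the unique retraction $K \to \overline{D \cap M}$ satisfying $f \circ r_M = f$ for all $f \in \C(K) \cap M$. Since $\C(K) \cap M$ separates the points of $\overline{D \cap M}$ by Proposition~\ref{p:characterizationRSkeletonNotCountableModels}, the element $y$ in \ref{it:equivUsingSuitableModels}, when it exists, must coincide with $r_M(x)$; hence \ref{it:equivUsingSuitableModels} is equivalent to the assertion that $r_M[D] \subset D$ for every sufficiently suitable model $M$.

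For \ref{it:DinducedByCommutative}$\Rightarrow$\ref{it:equivUsingSuitableModels}, fix a commutative retractional skeleton $\mathfrak{s}_2 = (r_s')_{s \in \Gamma'}$ inducing $D$ and enlarge the witnesses $(\Phi, Y)$ of Theorem~\ref{thm:canonicalRetraction} so that the triple $\langle \Gamma', \le', r'\rangle$ lies in $Y$. For any such $M$, Theorem~\ref{thm:canonicalRetraction}\ref{it:characterizationSecond}\ref{it:formulaUsingSkeleton} expresses $r_M$ as the pointwise limit $R_{\Gamma' \cap M}$, and the commutativity of $\mathfrak{s}_2$ combined with Lemma~\ref{l: retraction from up-directed subset}\ref{it:commutativeCase} yields $R_{\Gamma' \cap M}[D] = D \cap R_{\Gamma' \cap M}[K] \subset D$. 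Thus $y := r_M(x) \in D \cap \overline{D \cap M}$ is the required point, and by construction $f(y) = f(r_M(x)) = f(x)$ for every $f \in \C(K) \cap M$.

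For \ref{it:equivUsingSuitableModels}$\Rightarrow$\ref{it:DinducedByCommutative}, let $(\Phi_0, Y_0)$ witness \ref{it:equivUsingSuitableModels}, fix a retractional skeleton $\mathfrak{s}$ on $K$ inducing $D$, and enlarge $(\Phi_0, Y_0)$ to $(\Phi, Y)$ so that the moreover part of Theorem~\ref{thm:subskeletons}, applied to the constant sequence $\mathfrak{s}^n := \mathfrak{s}$, produces a family $\M$ of countable suitable models for $\Phi$ containing $Y$ such that $(r_M)_{M \in \M}$, ordered by inclusion, is a weak subskeleton of $\mathfrak{s}$, and hence itself a retractional skeleton on $K$ with induced set $D$. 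By \ref{it:equivUsingSuitableModels} we have $r_M[D] \subset D$ for every $M \in \M$. To invoke Theorem~\ref{thm:Intro3}\ref{it:skeletonInvariantSubset}$\Rightarrow$\ref{it:inducedByCommutative} on this skeleton with the subskeleton taken to be the whole skeleton and the dense set taken to be $D$, it remains to verify the limit condition: given an up-directed $\M'' \subset \M$ and $x \in D$, Lemma~\ref{l:propertiesOfCanonicalRetractions}\ref{it:increasingSequence} yields $\lim_{M \in \M''} r_M(x) = r_{M_\infty}(x)$, where $M_\infty := \bigcup \M''$ still satisfies $M_\infty \prec (\Phi; Y)$, so by \ref{it:equivUsingSuitableModels} we have $r_{M_\infty}(x) \in D$. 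Theorem~\ref{thm:Intro3}\ref{it:skeletonInvariantSubset}$\Rightarrow$\ref{it:inducedByCommutative} then concludes that $D$ is induced by a commutative retractional skeleton. The main technical subtlety is the uniform bookkeeping of the list of formulas and the countable set, so that \ref{it:equivUsingSuitableModels}, Theorem~\ref{thm:canonicalRetraction}, Theorem~\ref{thm:subskeletons}, and the construction underlying Theorem~\ref{thm:Intro3} all apply to the same family of models simultaneously.
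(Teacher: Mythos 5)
Your proof is correct, and the reduction at the start --- identifying condition \ref{it:equivUsingSuitableModels} with ``$r_M[D]\subset D$ for every sufficiently suitable model $M$,'' justified by the uniqueness of the point $y$ via Proposition~\ref{p:characterizationRSkeletonNotCountableModels} and Theorem~\ref{thm:canonicalRetraction}\ref{it:characterizationFirst} --- is exactly the right lens. For \ref{it:DinducedByCommutative}$\Rightarrow$\ref{it:equivUsingSuitableModels} your argument matches the paper's (you unpack the ingredients that establish Theorem~\ref{thm:canonicalRetraction}\ref{it:commutativeCaseModels}, which the paper cites directly, but the content is the same).

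Where you genuinely diverge is in \ref{it:equivUsingSuitableModels}$\Rightarrow$\ref{it:DinducedByCommutative}. The paper's proof is a ``look inside'' argument: it observes that in the proof of Theorem~\ref{thm:Intro3}\ref{it:skeletonInvariantSubset}$\Rightarrow$\ref{it:sigmaEmbedding}, the hypothesis \ref{it:skeletonInvariantSubset} is used only to secure $r_\alpha[D]\subset D$ for the chain of canonical retractions built via Proposition~\ref{prop:rri}, and that condition \ref{it:equivUsingSuitableModels} supplies that inclusion directly. You instead invoke Theorem~\ref{thm:Intro3} as a black box: build a concrete retractional skeleton $(r_M)_{M\in\M}$ of canonical retractions via Theorem~\ref{thm:subskeletons}, use \ref{it:equivUsingSuitableModels} plus Lemma~\ref{l:propertiesOfCanonicalRetractions}\ref{it:increasingSequence} to verify that for any up-directed $\M''\subset\M$ and $x\in D$ one has $\lim_{M\in\M''}r_M(x)=r_{M_\infty}(x)\in D$ (noting that $M_\infty=\bigcup\M''$ is still a suitable model even if it is no longer countable, so \ref{it:equivUsingSuitableModels} applies to it), and then cite \ref{it:skeletonInvariantSubset}$\Rightarrow$\ref{it:inducedByCommutative}. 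The trade-off: the paper's version is shorter but requires reading and trusting the internals of another proof; yours is modular and self-contained at the cost of introducing the auxiliary skeleton $(r_M)_{M\in\M}$ and the bookkeeping needed to synchronize the finite lists of formulas for Theorem~\ref{thm:subskeletons}, Lemma~\ref{l:propertiesOfCanonicalRetractions}, and condition \ref{it:equivUsingSuitableModels}. Both are sound; your version is the more reusable formulation.
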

    \begin{proof}
    \ref{it:DinducedByCommutative}$\Rightarrow$\ref{it:equivUsingSuitableModels}: By Theorem~\ref{thm:canonicalRetraction}, there is a finite list of formulas $\Phi$ and a countable set $S$ (depending on the compact space $K$ and the set $D$) such that for any $M\prec(\Phi,S)$, $M$ admits canonical retraction $r_M$ and we have $r_M[D]\subset D$. Then \ref{it:equivUsingSuitableModels} follows from Theorem~\ref{thm:canonicalRetraction} \ref{it:characterizationFirst} applied to $\A:=\C(K)$.\\
    \ref{it:equivUsingSuitableModels}$\Rightarrow$\ref{it:DinducedByCommutative}: Follows from the fact that in the proof of Theorem~\ref{thm:Intro3} \ref{it:skeletonInvariantSubset}$\Rightarrow$\ref{it:sigmaEmbedding} we used condition \ref{it:skeletonInvariantSubset} only to ensure that for a suitable model $M_\alpha$ we have $r_{\alpha}[D]\subset D$, which by Theorem~\ref{thm:canonicalRetraction} \ref{it:characterizationFirst} follows from the condition \ref{it:equivUsingSuitableModels} above.
    \end{proof}
 
\section{Characterization of (semi-)Eberlein compacta}\label{sec:semiEberlein}

Here, we apply the results of the preceding sections and characterize (semi)-Eberlein compacta using the notion of an $\A$-shrinking retractional skeleton.

\begin{defin}\label{def:shrinking}
Let $K$ be a countably compact space. Let $\emptyset\neq \A\subset \C(K)$ be a bounded set. The pseudometric $\rho_{\A}$ on $K$ is given as 
\[
\rho_{\A}(k,l):=\sup_{f\in\A} |f(k)-f(l)|,\quad k,l\in K.
\]
If $(r_s)_{s \in \Gamma}$ is a retractional skeleton on $K$ and $D\subset K$, we say that $(r_s)_{s \in \Gamma}$ is {\it $\A$-shrinking with respect to $D$} if for every $x \in D$ and every increasing sequence $(s_n)_{n \in \omega}$ in $\Gamma$ with $s:=\sup_{n\in \omega}s_{n}$, we have that $\lim_{n\in \omega}\rho_{\A}(r_{s_n}(x),r_s(x))=0$. If $(r_s)_{s\in\Gamma}$ is $\A$-shrinking with respect to $K$, then we just write that $(r_s)_{s\in\Gamma}$ is \emph{$\A$-shrinking}.

Finally, given $\varepsilon> 0$ we say that $(r_s)_{s \in \Gamma}$ is  {\it $(\A,\varepsilon)$-shrinking with respect to $D$} if for every $x \in D$ and every increasing sequence $(s_n)_{n \in \omega}$ in $\Gamma$ with $s:=\sup_{n\in \omega}s_{n}$, we have that $\limsup_{n\in \omega}\rho_{\A}(r_{s_n}(x),r_s(x))\leq \varepsilon$.
\end{defin}

Note that if the nonempty and bounded set $\A$ separates the points of $K$, then $\rho_{\A}$ is a metric on $K$.

The aim of this section is to prove the following result from which Theorem~\ref{thm:Intro1} and Theorem~\ref{thm:Intro2} easily follow.

\begin{thm}\label{t:mainEberleinThroughSkeleton2}
Let $K$ be a compact space and let $D\subset K$ be a dense set. Consider the following conditions.
\begin{enumerate}[label = (\roman*)]
\item\label{it:defSemiEberlein} There exists a homeomorphic embedding $h:K\to [-1,1]^I$ such that $h[D] = c_0(I)\cap h[K]$.
\item\label{it:eqdefSemiEberleinThroughSkeleton} There exist a bounded set $\A\subset \C(K)$ separating the points of $K$ and a retractional skeleton $\mathfrak{s} = (r_s)_{s\in\Gamma}$ on $K$ with $D\subset D(\mathfrak{s})$ such that
\begin{enumerate}[label=(\alph*)]
    \item $\mathfrak{s}$ is $\A$-shrinking with respect to $D$,
    \item\label{it:inSpecialCaseEquivalentToCommutativity2} $\lim_{s\in\Gamma'} r_s(x)\in D$, for every $x\in D$ and every up-directed subset $\Gamma'$ of $\Gamma$.
\end{enumerate}

\item\label{it:eqdefSemiEberleinThroughSkeletonAvailableForContImage} There exist a countable family $\A$ of subsets of $B_{\C(K)}$ and a retractional skeleton $\mathfrak{s} = (r_s)_{s\in\Gamma}$ on $K$ with $D\subset D(\mathfrak{s})$ such that
\begin{enumerate}[label=(\alph*)]
    \item For every $A\in\A$ there exists $\varepsilon_A>0$ such that $\mathfrak{s}$ is $(A,\varepsilon_A)$-shrinking with respect to $D$,
    \item\label{cond:fullInBall3} for every $\varepsilon>0$ we have $B_{\C(K)} = \bigcup\{A\in \A\colon \varepsilon_A < \varepsilon\}$, and
    \item\label{it:inSpecialCaseEquivalentToCommutativity3} $\lim_{s\in\Gamma'} r_s(x)\in D$, for every $x\in D$ and every up-directed subset $\Gamma'$ of $\Gamma$.
\end{enumerate}
\item\label{it:subsetCondition} There exists a homeomorphic embedding $h:K\to [-1,1]^J$ such that $h[D] \subset c_0(J)$.
\end{enumerate}
Then (i)$\Rightarrow$(ii)$\Rightarrow$(iii)$\Rightarrow$(iv).
\end{thm}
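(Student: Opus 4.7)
The plan is to prove each implication in sequence, with the substance concentrated in (iii)$\Rightarrow$(iv).

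For (i)$\Rightarrow$(ii), I would transfer the canonical $\Sigma$-skeleton on $h[K]$ back to $K$. By Lemma~\ref{lem:existenceOfTheSkeleton}, the family $r_\Lambda(y):=y|_\Lambda$ indexed by $\Lambda\in\Gamma:=\{B\in[I]^{\leq\omega}:y|_B\in h[K]\text{ for every }y\in h[K]\}$ is a commutative retractional skeleton on $h[K]$ with induced set $\Sigma(I)\cap h[K]\supset h[D]$, and $\tilde r_\Lambda:=h^{-1}\circ r_\Lambda\circ h$ transports it to $K$. Set $\A:=\{h_i:i\in I\}\subset B_{\C(K)}$, where $h_i(x):=h(x)(i)$; this set is bounded, point-separating, and contained in the unit ball. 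For $x\in D$ (so $h(x)\in c_0(I)$) and $\Lambda_n\uparrow\Lambda$ in $\Gamma$, the computation $\rho_\A(\tilde r_{\Lambda_n}x,\tilde r_\Lambda x)=\sup_{i\in\Lambda\setminus\Lambda_n}|h(x)(i)|\to 0$ gives $(\A,0)$-shrinking with respect to $D$. The up-directed limit property is immediate: for up-directed $\Gamma'\subset\Gamma$, $\lim_{\Lambda\in\Gamma'}\tilde r_\Lambda(x)$ corresponds to $h(x)|_{\bigcup\Gamma'}$, which remains in $c_0(I)\cap h[K]=h[D]$.

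For (ii)$\Rightarrow$(iii), after scaling assume $\A\subset B_{\C(K)}$ and replace $\A$ by the closed absolutely convex hull $\tilde\A$ of $\A\cup\{1\}$, still shrinking since $\rho_{\tilde\A}=\rho_\A$. For each $(n,k,m)\in\omega^3$ define
\[
\A_{n,k,m}:=\bigl\{f\in B_{\C(K)}:\exists g\in n\cdot\overline{\mathrm{aco}}(\tilde\A^k),\ \|f-g\|\leq 1/m\bigr\},
\]
with $\varepsilon_{\A_{n,k,m}}:=2/m$. A telescoping argument yields $\rho_{\tilde\A^k}(y,z)\leq k\rho_{\tilde\A}(y,z)$, and combined with $|f(y)-f(z)|\leq|g(y)-g(z)|+2/m$ it gives $\rho_{\A_{n,k,m}}(y,z)\leq nk\rho_{\tilde\A}(y,z)+2/m$, so each $\A_{n,k,m}$ is $(\A_{n,k,m},2/m)$-shrinking with respect to $D$. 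Since $\alg(\tilde\A\cup\{1\})$ is norm-dense in $\C(K)$ by Stone--Weierstrass, every $f\in B_{\C(K)}$ lies in some $\A_{n,k,m}$ for any prescribed $m$, securing (b); conditions (a) and (c) transfer automatically.

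The implication (iii)$\Rightarrow$(iv) will be handled by transfinite induction on $\kappa:=\w(K)$. First, Theorem~\ref{thm:Intro3} applied to condition (c) lets me replace $\mathfrak{s}$ by a commutative subskeleton without disturbing (a)--(c); then Proposition~\ref{prop:rri}, with $\A$ included in the ambient data, produces a continuous chain $(r_\alpha)_{\alpha\leq\kappa}$ of canonical retractions satisfying $r_\alpha[D]\subset D$ (by (c) and Theorem~\ref{thm:canonicalRetraction}~\ref{it:characterizationSecond}). The base case $\kappa\leq\omega$ is settled by the standard rescaling $f_n\mapsto f_n/n$ of a countable point-separating sequence, embedding the metric compact $K$ into $c_0(\omega)$. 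In the inductive step each $r_\alpha[K]$ has weight strictly less than $\kappa$ and inherits (iii) with the restricted skeleton supplied by Proposition~\ref{prop:rri}~\ref{it:skeletonOnSubset}, dense subset $D\cap r_\alpha[K]$, and restricted family $\{A|_{r_\alpha[K]}:A\in\A\}$; the covering condition survives via Tietze extension. Assuming inductively $h_\alpha:r_\alpha[K]\to[-1,1]^{J_\alpha}$ with $h_\alpha[D\cap r_\alpha[K]]\subset c_0(J_\alpha)$, I would assemble $h:K\to[-1,1]^J$ on $J:=J_0\sqcup\bigsqcup_{\alpha<\kappa}J_{\alpha+1}$ by $h(x)(t):=h_0(r_0 x)(t)$ on $J_0$ and $h(x)(t):=\tfrac12(h_{\alpha+1}(r_{\alpha+1}x)(t)-h_{\alpha+1}(r_\alpha x)(t))$ on $J_{\alpha+1}$, exactly as in the proof of Theorem~\ref{thm:Intro3}; injectivity follows from the minimal-ordinal-separating argument of Proposition~\ref{prop:rri}~\ref{it:ordinalSeparatingPoints}.

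The \textbf{main difficulty} is verifying $h[D]\subset c_0(J)$. For $x\in D$ and $\varepsilon>0$ the $J_0$-part is in $c_0(J_0)$ and each individual $J_{\alpha+1}$-part is in $c_0(J_{\alpha+1})$; since $J_x:=\{\alpha:r_{\alpha+1}x\neq r_\alpha x\}$ is countable by Proposition~\ref{prop:rri}~\ref{it:countableSupport2}, the naive bound yields only a countable total of bad coordinates, not finitely many. To sharpen this to finite one must strengthen the inductive statement to record that every coordinate function of $h_\alpha$ is proportional to the restriction to $r_\alpha[K]$ of some $f\in A_n$, so that the sup-norm of the difference at level $\alpha+1$ is bounded by $\tfrac12\rho_{A_n}(r_{\alpha+1}x,r_\alpha x)$, and then establish that for every $n$ with $\varepsilon_n<\varepsilon/3$ only finitely many active $\alpha$ satisfy $\rho_{A_n}(r_{\alpha+1}x,r_\alpha x)>2\varepsilon$. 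This finiteness statement will be proved by contradiction: starting from an infinite bad sequence $(\alpha_i)\subset J_x$, the Fr\'echet--Urysohn property of $D(\mathfrak{s})$ combined with Theorem~\ref{thm:subskeletons} produces a single increasing sequence $(t_i)\subset\Gamma$ whose associated retractions interleave $r_{\alpha_i}(x)$ and $r_{\alpha_i+1}(x)$; the $(A_n,\varepsilon_n)$-shrinking $\limsup$-bound together with a triangle inequality then contradicts the assumed uniform lower bound on consecutive differences, completing the verification.
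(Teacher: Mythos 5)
Your treatments of (i)$\Rightarrow$(ii) and (ii)$\Rightarrow$(iii) are essentially the paper's: (i)$\Rightarrow$(ii) reproduces Lemma~\ref{lem:semiEbImpliesShrinkingSkeleton} by pulling back the coordinate skeleton from Lemma~\ref{lem:existenceOfTheSkeleton}, and (ii)$\Rightarrow$(iii) builds the same kind of algebraic thickenings $\A_n$ with norm-balls, using Stone--Weierstrass for the covering condition; the parametrization $(n,k,m)$ differs superficially from the paper's $(n,m)$, but the estimate $\rho_{\A_{n,m}}\le n^2\lambda^{n+1}\rho_\A+\tfrac1m$ is the same telescoping bound.

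The gap is in (iii)$\Rightarrow$(iv). You propose a single transfinite recursion producing $h:K\to[-1,1]^J$ directly, with the tagging strengthening that each coordinate of $h_\alpha$ is proportional to the restriction of some $f\in A_n$, and then a finiteness claim ``for each $n$ with $\varepsilon_n$ small, only finitely many active $\alpha$ have $\rho_{A_n}(r_{\alpha+1}x,r_\alpha x)>2\varepsilon$.'' Even granting both of these (the finiteness-per-$n$ claim is essentially Lemma~\ref{l: retraction from up-directed subset semi-eberlein caseImproved}\,\ref{it:shrinkingUpDirectedCanonicalRetractions}, and the tagging would also require the new coordinate $\tfrac12(g_t\circ r_{\alpha+1}-g_t\circ r_\alpha)$ to lie in some $A_m$, which the hypotheses on the family $\A$ do not guarantee since the $A_n$ are arbitrary subsets of $B_{\C(K)}$ with no closure under composition or differences), your bookkeeping still produces only a \emph{countable} bad set: the bad coordinates at level $\alpha+1$ are tagged by \emph{varying} $n_t$'s, so the levels that contribute a bad coordinate tagged $n$ are finite for each fixed $n$, but there are countably many $n$'s in play. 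Nothing in the argument shows that only finitely many $n$'s contribute, so you only recover $|\{t: |h(x)(t)|>\varepsilon\}|\le\omega$, exactly the obstruction you already identified one paragraph earlier.

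The paper avoids this by splitting the construction into two stages. First, Theorem~\ref{thm:Intro3} (applied with $\A=B_{\C(K)}$, which is closed/symmetric/convex/$\circ r_s$-invariant) produces the $\Sigma$-embedding $\varphi$, so that after transfer the coordinate functions are bare projections $\pi_h$. Then Proposition~\ref{prop:semiEberleinFarmakiImproved} produces, for each $\varepsilon>0$, a \emph{countable} decomposition $I=\bigcup_n I_n^\varepsilon$ whose pieces are explicitly labelled by $(n,A)$, each with only finitely many $\varepsilon$-big coordinates of $x$. Crucially, Proposition~\ref{prop:decompImpliesSemiEberlein} then builds the $c_0(I\times\omega)$ embedding by reindexing over $(i,k)$ and rescaling by $\tfrac1{nk}$ together with the truncation $\tau_k$: this $\tfrac1{nk}$-scaling is what converts ``countably many finite bad sets'' into an actual $c_0$ condition, because for a given $\varepsilon$ only the finitely many $(n,k)$ with $nk<1/\varepsilon$ can have any bad coordinates. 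Your direct construction has no analogue of this rescaling step, and without it the countable-to-finite jump cannot be made. If you want to retain a one-shot recursion, the recursion would have to produce not an embedding into $[-1,1]^J$ but a decomposition in the sense of Proposition~\ref{prop:semiEberleinFarmakiImproved}, and then conclude via Proposition~\ref{prop:decompImpliesSemiEberlein} exactly as the paper does.
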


Let us first give some comments.

\begin{remark}\label{rem:comparison}
The notion of a shrinking retractional skeleton is inspired by \cite{FM18}, where the definition of a shrinking projectional skeleton was given and WCG Banach spaces were characterized using this notion.

Given a retractional skeleton $(r_s)_{s\in\Gamma}$ on a compact space $K$, it is well known that $(P_s)_{s\in\Gamma}$ given by $P_s(f):=f\circ r_s$, $s\in\Gamma$ is a projectional skeleton on $\C(K)$, see e.g. \cite[Proposition 5.3]{K20}. Moreover, if $\emptyset\neq \A\subset \C(K)$ is a bounded set and $(P_s)_{s\in\Gamma}$ is $\A$-shrinking in the sense of \cite[Definition 16]{FM18}, it is not very difficult to observe that $(r_s)_{s\in\Gamma}$ is $\A$-shrinking in the sense of Definition~\ref{def:shrinking}. It is not clear whether the converse holds as it is (at least formally) a stronger condition. Thus, Theorem~\ref{thm:Intro1} allows us in a certain sense to strengthen implication (ii)$\Rightarrow$(i) from \cite[Theorem 21]{FM18}. Since the other implication is easier, Theorem~\ref{thm:Intro1} may be thought of as a topological counterpart and in a certain sense also strengthening of \cite[Theorem 21]{FM18} in the context of $\C(K)$ spaces.
\end{remark}

Notice that the shrinkingness of a retractional skeleton is not a specific property of one particular skeleton. First, observe that any $\A$-shrinking retractional skeleton is also full, whenever $\A\subset \C(K)$ separates the points of $K$, this is generalized in the following.

    \begin{lemma}\label{lem:shrinkingImpliesFull}
    Let $K$ be a compact space, $\A\subset \C(K)$ be a bounded set separating the point of $K$ and let $\mathfrak{s} = (r_s)_{s\in\Gamma}$ be a retractional skeleton on $K$ which is $\A$-shrinking with respect to a set $D$ with $D\supset D(\mathfrak{s})$. Then $D=D(\mathfrak{s})$ and $\lim_{s\in\Gamma'} r_s(x)\in D$, for every $x\in D$ and every up-directed subset $\Gamma'$ of $\Gamma$.
    \end{lemma}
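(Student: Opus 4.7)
The plan is to establish $D = D(\mathfrak{s})$ first, and then prove the second assertion by the same three-step argument with $\Gamma$ replaced by $\Gamma'$ and the limit $x$ replaced by $R_{\Gamma'}(x) := \lim_{s \in \Gamma'} r_s(x)$. The key tool throughout is the pseudometric $\rho_\A$, which is in fact a \emph{metric} on $K$ because $\A$ separates points, and which is lower semicontinuous with respect to $\tau$ in each variable, being the pointwise supremum of continuous functions.

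\emph{First assertion.} Fix $x \in D$; we show $x \in D(\mathfrak{s})$. \emph{Step 1 (Cauchy):} the net $(r_s(x))_{s \in \Gamma}$ is $\rho_\A$-Cauchy. Otherwise, there is $\varepsilon > 0$ and, using that $\Gamma$ is up-directed, one builds inductively an increasing sequence $s_0 \leq t_0 \leq s_1 \leq t_1 \leq \cdots$ in $\Gamma$ with $\rho_\A(r_{s_n}(x), r_{t_n}(x)) > \varepsilon$ for every $n$. By $\sigma$-completeness, $a := \sup_n s_n = \sup_n t_n \in \Gamma$, and $\A$-shrinkingness at $x \in D$ forces $\rho_\A(r_{s_n}(x), r_a(x)) \to 0$ and $\rho_\A(r_{t_n}(x), r_a(x)) \to 0$, contradicting the triangle inequality. \emph{Step 2 ($\rho_\A$-convergence to $x$):} since $r_s(x) \to x$ in $\tau$ and $\rho_\A(r_t(x), \cdot)$ is lower semicontinuous, $\rho_\A(r_t(x), x) \leq \liminf_s \rho_\A(r_t(x), r_s(x))$, which combined with Step 1 yields $\rho_\A(r_t(x), x) \to 0$. \emph{Step 3:} extract an increasing sequence $(s_n) \subset \Gamma$ with $\rho_\A(r_{s_n}(x), x) < 1/n$ (possible by Step 2 and up-directedness of $\Gamma$), and set $s := \sup_n s_n \in \Gamma$. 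Definition~\ref{def: r-skeleton}(iii) gives $r_{s_n}(x) \to r_s(x)$ in $\tau$, while shrinkingness gives $r_{s_n}(x) \to r_s(x)$ in $\rho_\A$; the triangle inequality then forces $r_s(x) = x$ since $\rho_\A$ is a metric. Hence $x \in r_s[K] \subset D(\mathfrak{s})$.

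\emph{Second assertion.} Fix $x \in D$ and an up-directed $\Gamma' \subset \Gamma$; the limit $R_{\Gamma'}(x)$ exists and defines a continuous retraction by Lemma~\ref{l: retraction from up-directed subset}. I rerun Steps 1--3 with $\Gamma$ replaced by $\Gamma'$. In Step 1, any interleaved bad sequence in $\Gamma'$ still has supremum $a$ in $\Gamma$ by $\sigma$-completeness of $\Gamma$, and the shrinkingness hypothesis (applied at the fixed point $x \in D$ along the increasing sequences $(s_n)$ and $(t_n)$, whose sup lies in $\Gamma$) yields the contradiction. Step 2 gives that $(r_s(x))_{s \in \Gamma'}$ $\rho_\A$-converges to $R_{\Gamma'}(x)$. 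In Step 3, extract an increasing $(s_n) \subset \Gamma'$ with $\rho_\A(r_{s_n}(x), R_{\Gamma'}(x)) < 1/n$ and put $s := \sup_n s_n \in \Gamma$; combining the skeleton axiom and shrinkingness as before forces $r_s(x) = R_{\Gamma'}(x)$, so $R_{\Gamma'}(x) \in r_s[K] \subset D(\mathfrak{s}) = D$.

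The one delicate point is the passage in Step 2 from a $\rho_\A$-Cauchy net to a net actually $\rho_\A$-converging to the $\tau$-limit; the bridge between the two (a priori different) topologies is precisely the lower semicontinuity of $\rho_\A$ with respect to $\tau$, which I view as the crux of the argument.
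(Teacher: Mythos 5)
Your proof is correct and is essentially the argument the paper has in mind. The paper compresses your Steps~1--3 into a citation of \cite[Proposition 20]{FM18} (producing an increasing $(s_n)\subset\Gamma'$ with $s=\sup_n s_n$ and $\rho_\A$-$\lim_{t\in\Gamma'} r_t(x)=r_s(x)$) and then identifies this with the $\tau$-limit by evaluating against each $f\in\A$ and invoking separation of points; you instead bridge the two limits via lower semicontinuity of $\rho_\A$, which is a slightly different but equally valid way to make the identification, and you prove the two assertions in the opposite order, which is immaterial.
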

    \begin{proof}
    Fix $x\in D$ and an up-directed set $\Gamma'\subset\Gamma$. Since $\mathfrak{s}$ is $\A$-shrinking with respect to $D$, it is not very difficult to observe (see e.g. \cite[Proposition 20]{FM18}) that there exists an increasing sequence $(s_n)_{n\in\omega}$ in $\Gamma'$ with $s = \sup_n s_n\in\Gamma$ such that $\rho_\A-\lim_{t\in\Gamma'} r_t(x) = r_s(x)$. Therefore, since the limit $\lim_{t\in\Gamma'} r_t(x)$ exists, we obtain $f(\lim_{t\in\Gamma'} r_t(x)) = f(r_s(x))$, for every $f\in \A$. Since $\A$ separates the points of $K$, we deduce that $\lim_{t\in\Gamma'} r_t(x) = r_s(x)\in D(\mathfrak{s})\subset D$. Finally, for $\Gamma'=\Gamma$ we obtain $x = \lim_{s\in \Gamma} r_s(x)\in D(\mathfrak{s})$ and so $D\subset D(\mathfrak{s})$.
    \end{proof}
    
\begin{lemma}\label{ShrinkingCommutativeSkeleton}
Let $K$ be a compact space and $\A\subset \C(K)$ be a bounded set separating the points of $K$. If there exists an $\A$-shrinking retractional skeleton on $K$, then every full retractional skeleton on $K$ admits a weak subskeleton which is $\A$-shrinking and commutative. 
\end{lemma}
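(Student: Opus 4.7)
The plan is to manufacture the desired weak subskeleton by combining the given $\A$-shrinking skeleton with a commutative subskeleton of $\mathfrak{s}$ using Theorem~\ref{thm:subskeletons}. Since $\A$ separates the points of $K$, Lemma~\ref{lem:shrinkingImpliesFull} tells us that any $\A$-shrinking retractional skeleton $\mathfrak{s}_1 = (r_s^1)_{s \in \Gamma_1}$ on $K$ is automatically full, so $D(\mathfrak{s}_1) = K$. Now, given an arbitrary full retractional skeleton $\mathfrak{s} = (r_s)_{s \in \Gamma}$ on $K$, Corollary~\ref{cor:fullIsCommutative} produces a commutative subskeleton $\mathfrak{s}_2 = (r_s)_{s \in \Gamma_2}$ of $\mathfrak{s}$, which is still a full retractional skeleton on $K$.

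Next, I would apply the ``moreover'' part of Theorem~\ref{thm:subskeletons} to the pair $(\mathfrak{s}_1, \mathfrak{s}_2)$, both of which induce the whole of $K$. This produces a family $\M$ of countable suitable models, up-directed by inclusion, such that each $M \in \M$ admits a canonical retraction $r_M$ associated to $M$, $K$ and $K$, and such that $(r_M)_{M \in \M}$ is a weak subskeleton of $\mathfrak{s}_i$ via the map $\phi_i(M) := \sup(\Gamma_i \cap M)$ for $i = 1, 2$. Because $\mathfrak{s}_2$ is itself a subskeleton of $\mathfrak{s}$ (hence $\Gamma_2$ is cofinal and $\sigma$-closed in $\Gamma$, and $r^{\mathfrak{s}}$ and $r^{\mathfrak{s}_2}$ agree on $\Gamma_2$), the family $(r_M)_{M \in \M}$ is automatically a weak subskeleton of $\mathfrak{s}$ as well.

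Commutativity then comes for free: each $r_M$ coincides with the retraction $r_{\phi_2(M)}$ coming from the commutative skeleton $\mathfrak{s}_2$, so any two members of $(r_M)_{M \in \M}$ commute. The step that I expect to be the most delicate is transferring $\A$-shrinkingness, so I would spend my effort there. Given an increasing sequence $(M_n)_{n \in \omega}$ in $\M$ and any $x \in K$, the $\omega$-monotonicity of $\phi_1$ (built into the definition of a weak subskeleton) yields $\phi_1(M_\infty) = \sup_n \phi_1(M_n)$ in $\Gamma_1$, where $M_\infty := \bigcup_n M_n$. Since $r_{M_n} = r^1_{\phi_1(M_n)}$ and $r_{M_\infty} = r^1_{\phi_1(M_\infty)}$, the $\A$-shrinkingness of $\mathfrak{s}_1$ transfers verbatim, giving $\lim_n \rho_\A(r_{M_n}(x), r_{M_\infty}(x)) = 0$ and completing the verification.
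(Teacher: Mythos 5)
Your proof is correct and follows essentially the same route as the paper's: Lemma~\ref{lem:shrinkingImpliesFull} for fullness, Corollary~\ref{cor:fullIsCommutative} for commutativity, and Theorem~\ref{thm:subskeletons} for the common weak subskeleton. The only cosmetic difference is where you invoke Corollary~\ref{cor:fullIsCommutative}: the paper applies it to the $\A$-shrinking skeleton $\mathfrak{s}_1$ and then takes a common weak subskeleton with the given $\mathfrak{s}$ directly, whereas you apply it to $\mathfrak{s}$ and then invoke transitivity of weak subskeletons (the Fact following Definition~\ref{def:weakSubskeleton}); both are equally valid.
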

\begin{proof}
By Lemma \ref{lem:shrinkingImpliesFull}, there exists an $\A$-shrinking and full retractional skeleton $(\widetilde{r}_i)_{i\in I}$ on $K$. Moreover, by Corollary~\ref{cor:fullIsCommutative} we may without loss of generality assume that $(\widetilde{r}_i)_{i\in I}$ is commutative.
Now, let $(r_{s})_{s\in\Gamma}$ be a full retractional skeleton on $K$. By Theorem \ref{thm:subskeletons}, there exists a retractional skeleton $(R_i)_{i\in\Lambda}$ which is a weak subskeleton of both $(r_s)_{s\in\Gamma}$ and $(\widetilde{r}_i)_{i\in I}$. It is easy to see that $(R_i)_{i \in \Lambda}$ is commutative and $\A$-shrinking.
\end{proof}

In the remainder of this section we provide the proof of  Theorem~\ref{t:mainEberleinThroughSkeleton2}. Let us start with the proof of the implication \ref{it:defSemiEberlein}$\Rightarrow$\ref{it:eqdefSemiEberleinThroughSkeleton}.

 \begin{lemma}\label{lem:semiEbImpliesShrinkingSkeleton}
Let $K$ be a compact space and $D\subset K$ be a dense subset. If there exists a homeomorphic embedding $h:K\to [-1,1]^I$ such that $h[D] = h[K]\cap c_0(I)$, then there are $
\A\subset B_{\C(K)}$, $\mathfrak{s} = (r_s)_{s\in\Gamma}$ such that \ref{it:eqdefSemiEberleinThroughSkeleton} in Theorem~\ref{t:mainEberleinThroughSkeleton2} holds with $
\A$, $\mathfrak{s} = (r_s)_{s\in\Gamma}$ and we moreover have $f\circ r_s\in \A$ for every $f\in\A$ and $s\in\Gamma$.
\end{lemma}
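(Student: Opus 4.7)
The plan is to identify $K$ with $h[K]\subset [-1,1]^I$ (so that $D$ becomes $K\cap c_0(I)$) and then use the canonical ``coordinate-restriction'' retractional skeleton on $K$ together with the family of coordinate functionals as the separating set $\A$.

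More precisely, I will take $\mathfrak{s}=(r_A)_{A\in\Gamma}$ to be the skeleton produced by Lemma~\ref{lem:existenceOfTheSkeleton}, namely
\[
\Gamma:=\{A\in [I]^{\leq\omega}\setsep x|_A\in K\text{ for every }x\in K\},\qquad r_A(x):=x|_A,
\]
whose induced set is $\Sigma(I)\cap K\supset c_0(I)\cap K=D$. As the separating set I will take
\[
\A:=\{\pm\pi_i\setsep i\in I\}\cup\{0\}\subset B_{\C(K)},
\]
where $\pi_i(x)=x(i)$. This is clearly bounded, separates the points of $K$, and one computes directly that $\pi_i\circ r_A=\pi_i$ if $i\in A$ and $\pi_i\circ r_A=0$ otherwise, so that the invariance $f\circ r_s\in\A$ for $f\in\A$, $s\in\Gamma$ is automatic.

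The only non-formal step is verifying the $\A$-shrinking property with respect to $D$. For $x\in D=c_0(I)\cap K$ and an increasing sequence $(A_n)_n$ in $\Gamma$ with $A=\bigcup_n A_n\in\Gamma$, a direct unwinding of the definitions gives
\[
\rho_{\A}(r_{A_n}(x),r_A(x))=\sup_{i\in A\setminus A_n}|x(i)|,
\]
and this tends to $0$ as $n\to\infty$ because, for any $\varepsilon>0$, the set $\{i\in I\setsep |x(i)|>\varepsilon\}$ is finite (by definition of $c_0(I)$) and is eventually absorbed by $A_n$. This is the key computation; everything else is formal.

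Finally, condition (b) of \ref{it:eqdefSemiEberleinThroughSkeleton} is verified by the same coordinate-wise argument: for an up-directed $\Gamma'\subset\Gamma$, Lemma~\ref{l: retraction from up-directed subset} gives a continuous retraction $R_{\Gamma'}(x)=\lim_{A\in\Gamma'}r_A(x)$; inspection of coordinates shows $R_{\Gamma'}(x)(i)=x(i)$ for $i\in\bigcup\Gamma'$ and $0$ otherwise, so $\suppt(R_{\Gamma'}(x))\subset \suppt(x)$ and $|R_{\Gamma'}(x)(i)|\le|x(i)|$, whence $R_{\Gamma'}(x)\in c_0(I)\cap K=D$ whenever $x\in D$. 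I expect no real obstacle: the main work is just the coordinate computation and invoking $x\in c_0(I)$ to control tails.
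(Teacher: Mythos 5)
Your proposal is correct and follows essentially the same route as the paper: identify $K$ with $h[K]$, take the coordinate-restriction skeleton from Lemma~\ref{lem:existenceOfTheSkeleton}, and take $\A$ to be the coordinate evaluations together with $0$ (the paper uses $\{\pi_i|_K:i\in I\}\cup\{0\}$; your addition of $-\pi_i$ is harmless but superfluous). The shrinking estimate and the verification of condition (b) via coordinate-wise inspection are the same computations the paper performs.
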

\begin{proof}
We may without loss of generality assume that $K\subset [-1,1]^I$, $D = K\cap c_0(I)$. Pick the commutative retractional skeleton $(r_A)_{A\in\Gamma}$  from Lemma~\ref{lem:existenceOfTheSkeleton} and put $\SSS:=\{\pi_i|_K\colon i\in I\}\cup \{0\}\subset \C(K)$. Clearly $\SSS$ is bounded and separating. Moreover $D$ is obviously contained in the set $K\cap \Sigma(I)$ (which is the set induced by $(r_A)_{A\in\Gamma}$) and it is easy to observe that if $A\in\Gamma$ and $f\in \mathcal{S}$, then $f\circ r_A\in \SSS$.

Now, let us show that $(r_A)_{A \in \Gamma}$ is $\mathcal{S}$-shrinking with respect to $D$. Pick $x\in D$, an increasing sequence $(A_n)_{n\in\omega}$ in $\Gamma$ and put $A = \sup_n A_n= \bigcup_{n\in\omega} A_n$. Fix $\varepsilon>0$ and let $n_0\in \omega$ be such that $\{i\in A\colon |x(i)|>\varepsilon\}\subset A_{n_0}$. Then for every $n\geq n_0$ we obtain $r_{A_n}x(i) - r_Ax(i) = 1_{A\setminus A_n}\cdot x(i)$, therefore for every $i\in I$ we have $|r_{A_n}x(i) - r_Ax(i)| <\varepsilon$; hence $\sup_{f\in \mathcal{S}} |f(r_{A_n}x) - f(r_{A}x)| < \varepsilon$.

Finally, we note that whenever $\Gamma'\subset \Gamma$ is up-directed and $x\in D$, then $y:=\lim_{A\in\Gamma'} r_Ax$ exists by Lemma~\ref{l: retraction from up-directed subset} and moreover if $i \in \suppt y$, then $y(i)=x(i)$. Therefore, we have that $y \in K \cap c_0(I)=D$.
\end{proof}

The most demanding is the proof of the implication \ref{it:eqdefSemiEberleinThroughSkeletonAvailableForContImage}$\Rightarrow$\ref{it:subsetCondition} in Theorem~\ref{t:mainEberleinThroughSkeleton2}. We start with an easy observation.

\begin{lemma}\label{l: retraction from up-directed subset semi-eberlein caseImproved}
Let $K$ be a compact space, $\A\subset \C(K)$ be a bounded set, $\varepsilon>0$ and $D$ be a subset of $K$. Suppose that $(r_s)_{s\in \Gamma}$ is a retractional skeleton on $K$ that is $(\A,\varepsilon)$-shrinking with respect to $D$. For an up-directed set $\Gamma^{'}\subset \Gamma$ let $R_{\Gamma^{'}}$ be as in Lemma~\ref{l: retraction from up-directed subset}. Then for every $x\in D$ we have the following.
\begin{enumerate}[label=(S\alph*)]
    \item\label{it:shrinkingCanonicalRetraction} If $\Gamma^{'}\subset\Gamma$ is up-directed, then 
    there exists $s_0\in \Gamma'$ such that we have
    \[\rho_\A(R_{\Gamma^{'}}(x),r_s(x)) \leq 7\varepsilon,\qquad s\geq s_0, s\in\Gamma'\] 
    \item\label{it:shrinkingUpDirectedCanonicalRetractions} If $\M\subset \mathcal{P}(\Gamma)$ is such that $\M$ is up-directed and each $M\in\M$ is up-directed, then there exists $M_0\in \M$ such that for every $M\in\M$ with $M\supset M_0$ we have
    \[
    \rho_\A(R_M(x),R_{\bigcup \M}(x)) \leq 14\varepsilon.
    \]
\end{enumerate}
\end{lemma}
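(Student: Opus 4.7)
The plan is to prove (Sa) by contradiction, using the retractional skeleton's axiom (iii) to convert the net-limit $R_{\Gamma'}(x)$ into a sequential limit $r_s(x)$ that the $(\A,\varepsilon)$-shrinking hypothesis can act on. Then (Sb) will follow from (Sa) by a straightforward triangle inequality, which explains why the constant doubles from $7\varepsilon$ to $14\varepsilon$.

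For (Sa), fix $x\in D$ and assume toward contradiction that the set $B:=\{s\in\Gamma':\rho_\A(R_{\Gamma'}(x),r_s(x))>7\varepsilon\}$ is cofinal in $\Gamma'$; so for each $s\in\Gamma'$ I can pick $\phi(s)\in B$ with $\phi(s)\ge s$, and an $f_s\in\A$ witnessing $|f_s(R_{\Gamma'}(x))-f_s(r_{\phi(s)}(x))|>7\varepsilon$. I will recursively construct an increasing sequence $s_1\le t_1\le s_2\le t_2\le\cdots$ in $\Gamma'$ and $(f_n)_n\subset\A$ as follows: $t_n:=\phi(s_n)$ with witness $f_n$; and using the net-convergence $r_t(x)\to R_{\Gamma'}(x)$ applied to the \emph{finite} set $\{f_1,\ldots,f_n\}$ together with the up-directedness of $\Gamma'$, I pick $s_{n+1}\ge t_n$ in $\Gamma'$ satisfying $|f_i(r_{s_{n+1}}(x))-f_i(R_{\Gamma'}(x))|<\varepsilon$ for $i\le n$. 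Set $s:=\sup_n s_n=\sup_n t_n$ in $\Gamma$, which exists by $\sigma$-completeness.

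Now, by axiom (iii) of the retractional skeleton $r_{s_m}(x)\to r_s(x)$ in $K$, so by continuity of $f_i$ and passing to the limit in $m$ in the inequality above, $|f_i(r_s(x))-f_i(R_{\Gamma'}(x))|\le\varepsilon$ for every $i$. On the other hand $(\A,\varepsilon)$-shrinkingness at $x$ applied to the increasing sequence $(t_n)$ gives $\limsup_n\rho_\A(r_{t_n}(x),r_s(x))\le\varepsilon$, so for $n$ large enough $|f_n(r_{t_n}(x))-f_n(r_s(x))|\le\rho_\A(r_{t_n}(x),r_s(x))<2\varepsilon$. Combining via the triangle inequality gives $|f_n(r_{t_n}(x))-f_n(R_{\Gamma'}(x))|<3\varepsilon$, contradicting the choice of $f_n$ (the gap between $3\varepsilon$ and $7\varepsilon$ is ample slack).

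For (Sb), I apply (Sa) to the up-directed set $\Gamma':=\bigcup\M$, obtaining some $s_0\in\bigcup\M$ with $\rho_\A(R_{\bigcup\M}(x),r_s(x))\le 7\varepsilon$ whenever $s\ge s_0$ in $\Gamma'$. Pick $M_0\in\M$ with $s_0\in M_0$; for any $M\in\M$ containing $M_0$, apply (Sa) a second time to the up-directed set $M$ to obtain some $t_M\in M$ such that $\rho_\A(R_M(x),r_s(x))\le 7\varepsilon$ for $s\ge t_M$ in $M$. Using that $M$ is up-directed and that $s_0,t_M\in M$, pick $u_M\in M$ above both; then $u_M\ge s_0$ in $\Gamma'$ and $u_M\ge t_M$ in $M$, so the triangle inequality
\[\rho_\A(R_M(x),R_{\bigcup\M}(x))\le\rho_\A(R_M(x),r_{u_M}(x))+\rho_\A(r_{u_M}(x),R_{\bigcup\M}(x))\le 14\varepsilon\]
gives the conclusion. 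The main obstacle is the first recursive construction in (Sa); the trick is to freeze the witnesses $f_n$ at the ``bad'' stage and force the later $s_{n+1}$'s to approximate $R_{\Gamma'}(x)$ on all previously chosen $f_i$'s, so that the sequential sup $s$ inherits uniform closeness to $R_{\Gamma'}(x)$ on each $f_i$ even though the whole net does not.
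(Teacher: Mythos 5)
Your proof is correct, and part (Sb) matches the paper's argument almost verbatim. Part (Sa) is genuinely organized differently. The paper first isolates an auxiliary ``$3\varepsilon$-Cauchy'' claim, namely that there is an $s_0\in\Gamma'$ with $\rho_\A(r_s(x),r_{s_0}(x))<3\varepsilon$ for all $s\ge s_0$ in $\Gamma'$ (proved by a short contradiction: otherwise one builds an increasing sequence of mutual $\rho_\A$-gap at least $3\varepsilon$, violating $(\A,\varepsilon)$-shrinkingness), and then finishes with a three-term triangle inequality for one fixed $f\in\A$ at a time, using only net convergence to pick a single $s_2$ with $|f(r_{s_2}(x))-f(R_{\Gamma'}(x))|<\varepsilon$; this gives the bound $3\varepsilon+3\varepsilon+\varepsilon=7\varepsilon$. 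You instead negate (Sa) outright, record the cofinal bad set $B$, and run a more elaborate recursion that interleaves bad indices $t_n=\phi(s_n)$ with controlled indices $s_{n+1}$ that are $\varepsilon$-close to $R_{\Gamma'}(x)$ on all previously frozen witnesses $f_1,\dots,f_n$; a single use of $\sigma$-completeness and shrinkingness at $s=\sup_n t_n$ then contradicts the badness of each $f_n$. Both arguments use exactly the same three ingredients ($\sigma$-completeness, $(\A,\varepsilon)$-shrinking, net convergence to $R_{\Gamma'}(x)$). The paper's factoring is cleaner because it decouples the Cauchy estimate from the approximation and avoids tracking a growing list of witnesses; your route shows that the ``freeze the witnesses'' trick also works and in fact lands at $3\varepsilon$ versus the asserted $7\varepsilon$, leaving slack, but it is somewhat harder to read. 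The (Sb) step in both proofs is the same two-application-of-(Sa)-plus-triangle-inequality argument.
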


\begin{proof}Pick $x\in D$.\\
\ref{it:shrinkingCanonicalRetraction} First, let us observe that there exists $s_0\in\Gamma'$ such that
\begin{equation}\label{eq:cauchyShrinkingness}
\rho_\A(r_s(x),r_{s_0}(x)) < 3\varepsilon,\qquad s\geq s_0,s\in\Gamma'.
\end{equation}
Indeed, if this is not the case we inductively construct an increasing sequence $(s_n)$ in $\Gamma'$ with $\rho_\A(r_{s_n}(x),r_{s_{n+1}}(x))\geq 3\varepsilon$, $n\in\en$ which is in contradiction with $(\A,\varepsilon)$-shrinkingness.

Pick $f\in \A$ and $s_1\geq s_0$, $s_1\in\Gamma'$. Since $\lim_{s\in\Gamma'} r_s(x) = R_{\Gamma'}(x)$, there exists $s_2\geq s_1$, $s_2\in\Gamma'$ with $|f(r_{s_2}(x)) - f(R_{\Gamma'}(x))|<\varepsilon$. Therefore, by \eqref{eq:cauchyShrinkingness} we obtain
\[
|f(r_{s_1}(x)) - f(R_{\Gamma'}(x))|\leq \rho_\A(r_{s_1}(x),r_{s_0}(x)) + \rho_\A(r_{s_0}(x),r_{s_2}(x)) + \varepsilon < 7\varepsilon.
\]
Since $f\in A$ was arbitrary, this proves \ref{it:shrinkingCanonicalRetraction}.\\ 
\ref{it:shrinkingUpDirectedCanonicalRetractions} By \ref{it:shrinkingCanonicalRetraction}, there exists $s_0\in \bigcup \M$ such that $\rho_\A(r_s(x),R_{\bigcup \M}(x))\leq 7\varepsilon$, for every $s\geq s_0$, $s\in \bigcup \M$. Let $M_0\in \M$ be such that $s_0\in M_0$. Then, for every $M\in \M$ with $M\supset M_0$ by \ref{it:shrinkingCanonicalRetraction} there exists $s_M\geq s_0$, $s_M\in M$ with $\rho_\A(r_{s_M}(x),R_M(x))\leq 7\varepsilon$, which implies that
\[\rho_\A(R_M(x),R_{\bigcup \M}(x)) \leq 14\varepsilon.\qedhere\]
\end{proof}

The following proposition together with Theorem~\ref{thm:Intro3} is the core of our argument. The idea to use such a result is related to a characterization of Eberlein compacta by Farmaki \cite[Theorem 2.9]{F87} (see also \cite[Theorem 10]{FGZ}). Note however, that our methods enable us to present a self-contained proof.

\begin{prop}\label{prop:semiEberleinFarmakiImproved}
Let $K \subset [-1,1]^I$ be a compact space and for $I'\subset I$ define
\[\mathcal{S}_{K,I'}=\{\pi_i|_K: i \in I'\}.\]
Suppose that $K$ admits a retractional skeleton $\mathfrak{s}=(r_s)_{s \in \Gamma}$ such that $D(\mathfrak{s})\subset \Sigma(I)$ and let $D \subset D(\mathfrak{s})$. Assume that there is a countable family $\A$ consisting of subsets of $I$ such that
\begin{enumerate}
    \item\label{it:epsilon-shrinking} For every $A \in \A$, there exists $\varepsilon_A>0$ such that $(r_s)_{s \in \Gamma}$ is $(\mathcal{S}_{K,A}, \varepsilon_A)$-shrinking with respect to $D$;
    \item\label{it:covers-ball} For every $\varepsilon> 0$, it holds that $I=\bigcup \{A \in \A: \varepsilon_A< \varepsilon\}$;
    \item $\lim_{s\in\Gamma'}r_s(x)\in D$, for every $x\in D$ and every up-directed subset $\Gamma'$ of $\Gamma$.
\end{enumerate}
Then for every $\varepsilon>0$ there is a decomposition $I = \bigcup_{n=0}^\infty I_n^\varepsilon$ such that \[\forall n\;\forall x\in D:\quad |\{i\in I_n^\varepsilon\colon |x(i)| > \varepsilon\}|<\omega.\]
\end{prop}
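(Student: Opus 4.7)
The plan is to reduce the statement to a single-set claim and then prove that claim by contradiction via an inductive construction exploiting the structural observation that each $r_s[K]$ is ``supported'' on a countable subset of $I$.

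First, fix $\varepsilon>0$. By hypothesis (2) I would select countably many $\{A_n\}_{n<\omega}\subset\A$ covering $I$ with each $\varepsilon_{A_n}$ much smaller than $\varepsilon$ (say $\varepsilon_{A_n}<\varepsilon/C$ for a suitable absolute constant $C$ to be fixed below), and set $I_n^\varepsilon:=A_n\setminus\bigcup_{k<n}A_k$. The proposition then reduces to the following \emph{key claim}: if $A\subset I$ and $(r_s)_{s\in\Gamma}$ is $(\mathcal{S}_{K,A},\eta)$-shrinking with respect to $D$ for some $\eta<\varepsilon/C$, then $T:=\{i\in A:|x(i)|>\varepsilon\}$ is finite for every $x\in D$.

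I would prove this key claim by contradiction. Assuming $T$ is infinite, the plan is to construct, inductively, an increasing sequence $(s_n)_{n<\omega}$ in $\Gamma$ and distinct indices $(t_n)_{n<\omega}\subset T$ with
\[
|r_{s_{n+1}}(x)(t_{n+1})-r_{s_n}(x)(t_{n+1})|>\varepsilon/4\qquad\text{for every }n.
\]
Letting $s:=\sup_n s_n\in\Gamma$ (which exists by $\sigma$-completeness), this would yield $\rho_{\mathcal{S}_{K,A}}(r_{s_n}(x),r_{s_{n+1}}(x))>\varepsilon/4$ for all $n$, and the triangle inequality then forces $\rho_{\mathcal{S}_{K,A}}(r_{s_n}(x),r_s(x))>\varepsilon/8$ infinitely often, contradicting $(\mathcal{S}_{K,A},\eta)$-shrinkingness once $C\ge 8$.

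The engine of the inductive step is the following structural observation: because $r_s[K]$ is compact metrizable and contained in $\Sigma(I)$, second-countability of $r_s[K]$ together with continuity of the projections $\pi_i|_{r_s[K]}$ supplies a countable set $I(s)\subset I$ outside of which every element of $r_s[K]$ vanishes (any countable dense subset of $r_s[K]$ has countable total support, and the other coordinates are continuous extensions of $0$). Given $s_n$, when $T\setminus I(s_n)$ is infinite the step is clean: I would pick $t_{n+1}\in T\setminus(I(s_n)\cup\{t_0,\dots,t_n\})$, so that $r_{s_n}(x)(t_{n+1})=0$, and then use the pointwise convergence $r_s(x)\to x$ along the net in $\Gamma$ to choose $s_{n+1}\ge s_n$ with $|r_{s_{n+1}}(x)(t_{n+1})-x(t_{n+1})|<\varepsilon/100$; since $|x(t_{n+1})|>\varepsilon$ this produces the required gap.

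The main obstacle is the remaining case: as $n$ grows, only finitely many $t_m\in T$ can remain outside $I(s_n)$, because for each fixed $m$ one has $t_m\in I(s)$ for all sufficiently large $s$ (since $r_s(x)(t_m)\to x(t_m)\ne 0$). To overcome this I plan to replace $x$ by $r_{s_n}(x)$, which lies in $D$ by condition (3) applied to the trivially up-directed singleton $\{s_n\}$, and descend into the subskeleton on $r_{s_n}[K]$ furnished by Lemma~\ref{l: retraction from up-directed subset}\ref{it:skeletonOnTheSubspace}; this subskeleton inherits $(\mathcal{S}_{K,A},\eta)$-shrinkingness with respect to the trace of $D$, and the construction can be restarted inside the ``smaller'' compact $r_{s_n}[K]$. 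Combining this descent with the quantitative estimates of Lemma~\ref{l: retraction from up-directed subset semi-eberlein caseImproved}, which bound $\rho_{\mathcal{S}_{K,A}}$-distances between up-directed limits $R_{\Gamma'}(x)$ and tail retractions $r_s(x)$ by a universal multiple of $\eta$, is the delicate point I expect to work out last; in the worst case one can also stratify the induction on the weight of $K$, reducing the pathological case to the metrizable base.
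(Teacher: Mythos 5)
There is a fundamental flaw: the \emph{key claim} to which you reduce the proposition is false, and as a consequence your decomposition $I_n^\varepsilon := A_n\setminus\bigcup_{k<n}A_k$ need not satisfy the required finiteness property. Consider $K=[-1,1]^\omega$, $I=\omega$, $D=K$, and the trivial (one-element) retractional skeleton $\Gamma=\{*\}$ with $r_*=\mathrm{id}_K$; since any increasing sequence in $\Gamma$ is constant, this skeleton is $(\mathcal{S}_{K,A},\eta)$-shrinking for \emph{every} $A\subset I$ and \emph{every} $\eta>0$, and condition (3) is trivial. Take $\A=\{A_{n,m}\colon n,m\in\omega\}$ with $A_{n,m}:=\{m,m+1,\dots\}$ and $\varepsilon_{A_{n,m}}:=1/(n+1)$. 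Then conditions (1)--(3) all hold, and in particular condition (2) holds because $\bigcup_m A_{n,m}=\omega$ for any fixed $n$. Now take $A:=A_{n,0}=\omega$ with $n$ large enough that $\varepsilon_A<\varepsilon/C$; your key claim asserts $\{i\colon |x(i)|>\varepsilon\}$ is finite for every $x\in D$, but $x=(1,1,1,\dots)\in D$ refutes it. Worse, since every set in $\A$ is a tail of $\omega$, \emph{any} choice of $(A_n)_{n<\omega}\subset\A$ and any ordering produces some $I_n^\varepsilon$ that is a tail of $\omega$, and $\{i\in I_n^\varepsilon\colon |x(i)|>\varepsilon\}$ is then infinite for $x=(1,1,\dots)$. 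Yet the proposition's conclusion does hold here trivially: just take $I_n^\varepsilon=\{n\}$. The decomposition thus genuinely cannot be assembled from $\A$ alone.

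This exposes the missing idea. The correct decomposition must refine not by $\A$ but by the structure of the skeleton itself. The paper's proof proceeds by induction on $\w(K)$ and, at the inductive step, uses Proposition~\ref{prop:rri} to produce a long chain of canonical retractions $(r_\alpha)_{\alpha\le\kappa}$ with $r_\alpha(x)=x|_{I\cap M_\alpha}$; the decomposition pieces are of the form
\[
I^\varepsilon_{(n,A)}=(A\cap J\cap I^\varepsilon_{n,0}\cap M_0)\cup\bigcup_{\alpha<\kappa}\bigl(A\cap J\cap I^\varepsilon_{n,\alpha+1}\cap (M_{\alpha+1}\setminus M_\alpha)\bigr),
\]
so each piece is further sliced by the successor differences $M_{\alpha+1}\setminus M_\alpha$ \emph{and} by the inductively obtained decompositions $I^\varepsilon_{n,\alpha+1}$ of the strictly smaller compact $r_{\alpha+1}[K]$. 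The shrinkingness hypothesis is then used only at the very end, via Lemma~\ref{l: retraction from up-directed subset semi-eberlein caseImproved}, to rule out an infinite run of successor indices $\alpha$ at which a big coordinate appears, which is a far weaker and more local use than your key claim demands. Your throwaway line about stratifying on weight ``in the worst case'' is actually where all the substance lives: even in the metrizable base case, the paper's decomposition is $I_0=I\setminus J$ plus singletons from $J$, which has nothing to do with $\A$ — a signal that building $I_n^\varepsilon$ directly from set-differences of $A_n\in\A$ was never going to work. Finally, your appeal to ``$T\setminus I(s_n)$ infinite'' also breaks down in the example above (here $r_*[K]=K$, so $I(s)=\omega$ for the only $s$), which is the same obstruction in a different guise.
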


\begin{proof}
By \cite[Proposition 19.5]{KKLbook}, we may pick a set $J\subset I$ such that $|J|=\w(K)$ and $\suppt{x}\subset J$, for every $x\in K$. By \cite[Lemma 3.2]{C14}, we have that $D(\mathfrak{s})=\Sigma(I)\cap K$ and hence Lemma~\ref{lem:existenceOfTheSkeleton} ensures that $D(\mathfrak{s})$ is induced by a commutative retractional skeleton.
Therefore it follows from Theorem \ref{thm:Intro3} that we may assume the retractional skeleton $\mathfrak{s}$ is commutative. Now, let us prove the result by induction on the weight of $K$. If $K$ has countable weight, then the set $J$ is countable and we may enumerates it as $J:=(j_n)_{n\ge 1}$. For each $\varepsilon>0$, let $I_{0}^{\varepsilon}= I\setminus J$, and $I_{n}^{\varepsilon}=\{j_n\}$, for every $n \ge 1$. Then $I=\bigcup_{n=0}^{\infty}I_{n}^{\varepsilon}$ and \[\forall n\;\forall x\in K:\quad |\{i\in I_n^\varepsilon\colon |x(i)| > \varepsilon\}|\leq 1.\]
Now suppose that $\w(K)=\kappa \ge \omega_1$ and that the result holds for compact spaces of weight less than $\kappa$. 
Proposition \ref{prop:rri} together with Theorem~\ref{thm:modelExists} imply the existence of sets $(M_\alpha)_{\alpha \le \kappa}$ satisfying \ref{it:areModels}-\ref{it:areContinuous} and retractions $(r_\alpha)_{\alpha \le \kappa}$ satisfying \ref{it:comutative}-\ref{it:countableSupport2}. Note that we can assume that $J \subset \bigcup_{\alpha < \kappa} M_\alpha$, by replacing \ref{it:areIncreasing} by the following (stronger) condition:
\[M_{\alpha+1}\prec (\Phi;\{j_\alpha,\alpha\}\cup M_\alpha), \ \forall \alpha < \kappa,\]
where $J=\{j_\alpha: \alpha < \kappa\}$.
Note that, by Lemma~\ref{l:canonicalRetractionInSigmaProduct}, we may assume that $r_\alpha(x) = x|_{I\cap M_\alpha}$, for every $x\in K$ and $\alpha<\kappa$. 
For each $\alpha < \kappa$, it is easy to see that for every $A\in\A$ the retractional skeleton $(r_s|_{r_\alpha[K]})_{s \in (\Gamma \cap M_\alpha)_\sigma}$ given by \ref{it:skeletonOnSubset} is $(\mathcal S_{r_\alpha[K],A},\varepsilon_A)$-shrinking with respect to the set  $D\cap r_\alpha[K]\subset D(\mathfrak{s})\cap r_{\alpha}[K] \subset \Sigma(I) \cap [-1,1]^{I}$. Moreover, if $\Gamma'\subset (\Gamma \cap M_\alpha)_\sigma$ is up-directed and $x \in D \cap r_\alpha[K]$, then using \ref{comutaralpha3} we conclude that $\lim_{s\in\Gamma'} r_s(x) \in D\cap r_{\alpha}[K]$.
Now fix $\varepsilon>0$ and let $I=\bigcup_{n \ge 1}I^\varepsilon_{n,0}$ be the decomposition given by induction hypothesis applied to $r_0[K]$ (using that by \ref{it:skeletonOnSubset} we may apply the inductive hypothesis to $r_0[K]$), that is, for every $y \in D\cap r_0[K]$ and $n \ge 1$ the set
\[\{i \in I^\varepsilon_{n,0}: \vert y(i) \vert >\varepsilon\}\]
is finite. Fix $\alpha < \kappa$, similarly let $I=\bigcup_{n \ge 1}I^\varepsilon_{n,\alpha+1}$ be the decomposition given by the induction hypothesis applied to $r_{\alpha+1}[K]$, that is, for every $y \in D\cap r_{\alpha+1}[K]$ and $n \ge 1$ the set
\[\{i \in I^\varepsilon_{n,\alpha+1}: \vert y(i) \vert >\varepsilon\}\]
is finite. For $A \in \A$ with $\varepsilon_A< \varepsilon/14$ define $I^\varepsilon_{(0,A)}=I \setminus J$ and for every $n \ge 1$ put
\[I^\varepsilon_{(n,A)}=(A\cap J \cap I^\varepsilon_{n,0} \cap M_0) \cup \bigcup_{\alpha < \kappa} \big(A\cap J \cap I^\varepsilon_{n, \alpha+1} \cap (M_{\alpha+1} \setminus M_{\alpha})\big).\]
Note that $I=\bigcup \{I^\varepsilon_{(n,A)}\colon n \ge 0, A\in \A, 14\varepsilon_A<\varepsilon\}$, since $J \subset \bigcup_{\alpha < \kappa}M_\alpha$ and $I=\bigcup \{A \in \A: 14\varepsilon_A< \varepsilon\}$.
Fixed $x \in D$, $A \in \A$ with $\varepsilon_A< \varepsilon/14$ and $n \ge 0$, let us show that the set
\[S_{(n,A)}=\{i \in I^\varepsilon_{(n,A)}: \vert x(i) \vert >\varepsilon\}\]
is finite. Since $\suppt(x) \subset J$, we have that $S_{(0,A)}$ is empty.
Fixed $n \ge 1$, note that in order to conclude that $S_{(n,A)}$ is finite it suffices to prove that the set \[\Lambda_{(n,A)}=\{\alpha<\kappa\colon |x(i)|>\varepsilon\text{ for some }i\in A\cap J \cap I^\varepsilon_{n,\alpha+1} \cap (M_{\alpha+1} \setminus M_\alpha)\}\]
is finite. Indeed, using that $r_0(x) = x|_{I\cap M_0} = x|_{J\cap M_0}$ we obtain that:
\[S_{(n,A)} \cap (A\cap J \cap I^\varepsilon_{n,0} \cap M_0) \subset \{i \in I^\varepsilon_{n,0}: \vert r_0(x)(i) \vert >\varepsilon\}\]
and therefore, since $r_0(x) = \lim_{s\in(\Gamma\cap M_0)} r_s(x)\in D\cap r_0[K]$, we conclude that $S_{(n,A)} \cap (A\cap J \cap I^\varepsilon_{n,0} \cap M_0)$ is finite. Similarly, for $\alpha < \kappa$ we have
\[\begin{split}
S_{(n,A)} \cap (A\cap J \cap I^\varepsilon_{n,\alpha+1} \cap M_{\alpha+1} \setminus M_\alpha) & \subset \{i \in I^\varepsilon_{n,\alpha+1}: \vert r_{\alpha+1}(x)|_{I \setminus M_\alpha}(i)\vert  >\varepsilon\}\\
& \subset \{i \in I^\varepsilon_{n,\alpha+1}: \vert r_{\alpha+1}(x)(i)\vert  >\varepsilon\}\end{split}\]
and therefore $S_{(n,A)} \cap (A\cap J \cap I^\varepsilon_{n,\alpha+1} \cap M_{\alpha+1} \setminus M_\alpha)$ is finite.
It remains to prove that $\Lambda_{(n,A)}$ is finite. In order to do that suppose by contradiction that $\Lambda_{(n,A)}$ is infinite, so there is a strictly increasing sequence $(\alpha_k)_{k \ge 1}$ of elements of $\kappa$ and a sequence $(i_k)_{k \ge 1}$ such that $i_k\in A\cap J \cap I^\varepsilon_{n,\alpha_k+1} \cap (M_{\alpha_k+1} \setminus M_{\alpha_k})$ and  $|x(i_k)|>\varepsilon$, for every $k \ge 1$.
Put $\alpha = \sup_k \alpha_k$. Then we have (because $i_k\in M_{\alpha_k+1}\setminus M_{\alpha_k}\subset M_\alpha\setminus M_{\alpha_k}$):
 \[\varepsilon < |x(i_k)| = |r_\alpha(x)(i_k) - r_{\alpha_k}(x)(i_k)|\leq \rho_{\mathcal{S}_{(K,A)}} (r_{\alpha}(x),r_{\alpha_k}(x)),\]
for every $k \ge 1$. 
This is a contradiction, because using \ref{it:secondFormula} and Lemma \ref{l: retraction from up-directed subset semi-eberlein caseImproved}  \ref{it:shrinkingUpDirectedCanonicalRetractions} applied to $\M=\{M_{\alpha_k} \cap \Gamma: k \ge 1\}$, we conclude that \[\limsup_{k \to \infty}\rho_{\mathcal{S}_{(K,A)}} (r_{\alpha}(x),r_{\alpha_k}(x))\leq 14\varepsilon_A < \varepsilon,\] since $\mathfrak{s}$ is $(\mathcal{S}_{K,A},\varepsilon_A)$-shrinking with respect to $D$.
\end{proof}

The following is based on \cite[Theorem 10]{FGZ}.

\begin{prop}\label{prop:decompImpliesSemiEberlein}
Let $K \subset [-1,1]^I$ be a compact space and $D$ be a subset of $K$. If for every $\varepsilon>0$, there exists a decomposition $I=\bigcup_{n \in \omega} I^\varepsilon_n$ such that for every $x \in D$ and every $n \in \omega$ the set
\[\{i \in I^\varepsilon_n: \vert x(i) \vert> \varepsilon\}\]
is finite, then there is a homeomorphic embedding $\Phi:K\to [-1,1]^{I\times\omega}$ such that $\Phi[D]\subset c_0(I\times \omega)$.
\end{prop}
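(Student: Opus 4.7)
The plan is to combine the decompositions at all scales $\varepsilon_k := 2^{-k}$ ($k \in \omega$) into a single explicit embedding with a tailored weighting. First I would invoke the hypothesis with each $\varepsilon_k$ and, by replacing $I^{\varepsilon_k}_n$ by $I^{\varepsilon_k}_n\setminus\bigcup_{m<n}I^{\varepsilon_k}_m$, assume each decomposition is actually a partition (the finiteness hypothesis is clearly preserved under passing to subsets). After fixing a bijection $\omega\cong\omega\times\omega$ that identifies the target $[-1,1]^{I\times\omega}$ with $[-1,1]^{I\times\omega\times\omega}$, I would define
\[
\Phi(x)(i,k,n):=\frac{1}{2^k(n+1)}\,\sgn(x(i))\,(|x(i)|-2^{-k})_+\cdot 1_{I^{\varepsilon_k}_n}(i).
\]

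Continuity of $\Phi$ is immediate, since each coordinate depends continuously on $x(i)$ through the continuous truncation $g_k(t):=\sgn(t)(|t|-2^{-k})_+$; moreover $|\Phi(x)(i,k,n)|\leq 1/(2^k(n+1))\leq 1$, so $\Phi$ takes values in $[-1,1]^{I\times\omega}$. For injectivity, given $x\neq y$ I would pick $i_0$ with $x(i_0)\neq y(i_0)$ and choose $k$ large enough that $2^{-k}$ lies strictly below both $|x(i_0)|$ and $|y(i_0)|$ (or below whichever value is nonzero, if the other vanishes); a short case analysis on signs then shows $g_k(x(i_0))\neq g_k(y(i_0))$, so $\Phi(x)$ and $\Phi(y)$ differ at coordinate $(i_0,k,n_0)$, where $n_0$ is the unique index with $i_0\in I^{\varepsilon_k}_{n_0}$. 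Compactness of $K$ upgrades this continuous injection to a homeomorphic embedding.

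The heart of the argument is the verification that $\Phi[D]\subset c_0(I\times\omega)$, which I would establish in two finite layers for fixed $x\in D$ and $\varepsilon>0$. The elementary bound $|\Phi(x)(i,k,n)|\leq 1/(2^k(n+1))$ forces any coordinate exceeding $\varepsilon$ to satisfy $2^k(n+1)<1/\varepsilon$, limiting $(k,n)$ to a finite set $F\subset\omega\times\omega$. For each $(k,n)\in F$, the same bound also implies $|x(i)|>2^{-k}=\varepsilon_k$ and $i\in I^{\varepsilon_k}_n$, so the hypothesis applied at scale $\varepsilon_k$ yields finiteness of the corresponding $i$-set. Summing the finite contributions over the finite index set $F$ produces $|\{(i,k,n):|\Phi(x)(i,k,n)|>\varepsilon\}|<\infty$.

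The main subtlety circumvented by this construction is the bi-scale bookkeeping: for generic $x\in D$ the set $\{i\in I:|x(i)|>\varepsilon\}$ is only countable, not finite, so any embedding preserving the coordinates of $x$ too faithfully immediately fails to land in $c_0(I\times\omega)$. The weighting factor $1/(2^k(n+1))$ is calibrated so as to simultaneously (i) confine $(k,n)$ to a finite set per $\varepsilon$ and (ii) align the truncation level $2^{-k}$ exactly with the scale $\varepsilon_k$ at which the hypothesis provides the finiteness in the $i$-variable; weakening either ingredient would break one half of the argument.
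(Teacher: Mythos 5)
Your proof is correct and follows essentially the same strategy as the paper's: both use the truncation-at-level-$\varepsilon_k$ trick (your $g_k$ is the paper's $\tau_k$, with $2^{-k}$ in place of $1/k$) together with a decaying weight in the new coordinates to ensure the image of $D$ lands in $c_0$, and both verify continuity, injectivity, and the $c_0$-estimate by the same finite-layer counting. Your version is slightly more explicit about the partition step and uses a triple index $(i,k,n)$ (harmless since $\omega\cong\omega\times\omega$), but this is only a bookkeeping variation, not a different route.
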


\begin{proof}
Let $k\in\omega$ and define the function $\tau_k:\mathbb{R}\to \mathbb{R}$ as
\begin{equation*}
\tau_k(t)=\begin{cases}
t + \frac{1}{k}, & \mbox{if } t\leq -\frac{1}{k},\\
0, & \mbox{if } -\frac{1}{k}\leq t \leq \frac{1}{k},\\
t-\frac{1}{k}, & \mbox{if } t\geq \frac{1}{k}.
\end{cases}
\end{equation*}
Define then $\Phi:K\to [-1,1]^{I \times\omega}$ by
\begin{equation*}
    \Phi(x)(i,k)=\frac{1}{nk}\tau_k(x(i)),
\end{equation*}
if $i\in I^{1/k}_{n}$, $n\in\omega$ and $k\in\omega$. Since the map $\pi_{(i,k)}\circ\Phi:K\to \mathbb{R}$ is continuous, for every $(i,k)\in I\times \omega$, the map $\Phi$ is continuous as well.
The map is also one-to-one. Indeed, for distinct $x_1,x_2 \in K$ there exists an $i \in I$ with $x_1(i)\neq x_2(i)$. Let $k\in\omega$ be such that $1/k<\max\{|x_1(i)|,|x_2(i)|\}$ and pick $n\in\omega$ with $i\in I_n^{1/k}$. Then $\tau_k(x_1(i))\neq\tau_k(x_2(i))$, therefore $\Phi(x_1)(i,k)\neq\Phi(x_2)(i,k)$.

It remains to prove that $\Phi[D]$ is contained in $c_{0}(I\times \omega)$. In order to do that, let $x\in D$ and fix $\varepsilon>0$. If $n,k\in\omega$, and $n>1/\varepsilon$ or $k>1/\varepsilon$, then $|\Phi(x)(i,k)|<\varepsilon$, for any choice of $i\in I^{1/k}_n$. Let $n,k<1/\varepsilon$ (observe that there are only finitely many $n$ and $k$ such that this inequality holds). Then we have
\begin{equation*}
    \begin{split}
        \{i\in I_{n}^{1/k}:|\Phi(x)(i,k)| & >\varepsilon\}  \subseteq  \{i\in I_{n}^{1/k}:\tau_k(x(i))\neq 0\}\\
        & =\{i\in I_{n}^{1/k}:x(i)>1/k\}\cup \{i\in I^{1/k}_n: x(i)<-1/k\}\\
        & =\{i \in I_{n}^{1/k}: \vert x(i) \vert>1/k\}.
    \end{split}
\end{equation*}
Therefore, the set $\{i\in I_{n}^{1/k}:|\Phi(x)(i,k)|>\varepsilon\}$ is finite and thus we conclude that $\Phi(x)\in c_0(I\times \omega)$.
\end{proof}

\begin{proof}[Proof of Theorem~\ref{t:mainEberleinThroughSkeleton2}]
Lemma~\ref{lem:semiEbImpliesShrinkingSkeleton} ensures that $\ref{it:defSemiEberlein} \Rightarrow \ref{it:eqdefSemiEberleinThroughSkeleton}$.

Now let us prove that \ref{it:eqdefSemiEberleinThroughSkeleton}$\Rightarrow$\ref{it:eqdefSemiEberleinThroughSkeletonAvailableForContImage}. Let $\A$ and $\mathfrak{s}=(r_s)_{s\in\Gamma}$ be as in the assumption and let $\lambda \ge 1$ be such that $\A \subset \lambda B_{\C(K)}$. We may without loss of generality assume that the constant $1$ function is  member of $\A$. For every $n\in\en$ put
\[
\A_n:=\left\{\sum_{i=1}^k a_i\Pi_{j=1}^n f_{i,j}\colon f_{i,j}\in\A,\; k\in\en\;, \sum_{i=1}^k |a_i|\leq n\right\}
\]
and for $m\in\en$ we further put
\[
\A_{n,m}:=(\A_n + \tfrac{1}{2m}B_{\C(K)})\cap B_{\C(K)}.
\]
Now, we claim that the family $\widetilde{\A}:=\{\A_{n,m}\colon n,m\in\en\}$ and the retractional skeleton $\mathfrak{s}$ satisfy the condition from \ref{it:eqdefSemiEberleinThroughSkeletonAvailableForContImage}. Pick $n,m\in\en$. Then $\mathfrak{s}$ is $(\A_{n,m},\tfrac{1}{m})$-shrinking with respect to $D$. Indeed, given $x\in D$ and an increasing sequence $(s_k)$ in $\Gamma$ with $s = \sup s_k$, we have
\[
\rho_{\A_{n,m}}(r_{s_k}(x),r_s(x))\leq \rho_{\A_{n}}(r_{s_k}(x),r_s(x)) + \tfrac{1}{m}\leq n^2\lambda^{n+1}\rho_{\A}(r_{s_k}(x),r_s(x)) + \tfrac{1}{m},
\]
so using that $\mathfrak{s}$ is $\A$-shrinking with respect to $D$, we obtain \[\limsup_k \rho_{\A_{n,m}}(r_{s_k}(x),r_s(x))\leq \tfrac{1}{m}.\]
Finally, since $\bigcup_{n\in\en} \A_n = \alg(\A)$ is norm-dense in $\C(K)$ we easily observe that $B_{\C(K)} = \bigcup_{n\in\en} \A_{n,m}$ for every $m\in\en$ from which the condition \ref{cond:fullInBall3} follows.

Now let us prove that \ref{it:eqdefSemiEberleinThroughSkeletonAvailableForContImage}$\Rightarrow$\ref{it:subsetCondition}. Let $\A$ and $\mathfrak{s}=(r_s)_{s\in\Gamma}$ be as in the assumption. By Theorem~\ref{thm:Intro3}, there exists $\HH\subset B_{\C(K)}$ such that the mapping $\varphi:K\to [-1,1]^\HH$ given by $\varphi(x)(h):=h(x)$, for $h\in\HH$ and $x\in K$, is a homeomorphic embedding and $\varphi[D(\mathfrak s)]\subset \Sigma(\HH)$.
For every $s \in \Gamma$, define $q_s=\varphi \circ r_s \circ \varphi^{-1}:\varphi[K] \to \varphi[K]$ and note that the retractional skeleton $(q_s)_{s \in \Gamma}$ is $(\mathcal{S}_{(\varphi[K],\HH\cap A)},\varepsilon_A)$-shrinking with respect to the set $\varphi[D]\subset\varphi[D(\mathfrak s)]$ for every $A\in\A$. Indeed, fix $x \in D$ and an increasing sequence $(s_n)_{n \ge 1}$ of elements of $\Gamma$ with $s=\sup_n s_n$. Then we have
\[ \rho_{\mathcal{S}_{(\varphi[K],\HH\cap A)}}\Big(q_{s_n}\big(\varphi(x)\big), q_{s}\big(\varphi(x)\big)\Big)=\sup_{h \in \HH\cap A} \Big\vert \varphi \big(r_{s_n}(x)\big)(h)-\varphi \big(r_{s}(x)\big)(h)\Big \vert=\]
\[= \sup_{h \in \HH\cap A} \Big\vert h\big(r_{s_n}(x)\big)-h\big(r_{s}(x)\big)\Big \vert \le  \rho_A(r_{s_n}(x),r_{s}(x)),\]
so since $\mathfrak{s}$ is $(A,\varepsilon_A)$-shrinking with respect to the set $D$, $(q_s)$ is $(\mathcal{S}_{(\varphi[K],\HH\cap A)},\varepsilon_A)$-shrinking with respect to the set $\varphi[D]$. Obviously, we have $\HH = \bigcup\{A\cap\HH\colon \varepsilon_A<\varepsilon\}$ for every $\varepsilon>0$. Finally, for every up-directed set $\Gamma'\subset \Gamma$ and every $x\in D$ we have that $\lim_{s\in\Gamma'} q_s(\varphi(x)) = \varphi (\lim_{s\in\Gamma'} r_s(x)) \in \varphi[D]$. Therefore, the result follows from Proposition~\ref{prop:semiEberleinFarmakiImproved} and Proposition \ref{prop:decompImpliesSemiEberlein}.
\end{proof}

\section{Applications to the structure of (semi)-Eberlein compacta}\label{sec:contImages}

We collect our applications to the structure of (semi-)Eberlein compacta. Most importantly, we prove Theorem~\ref{thm:Intro4}.

\subsection{Eberlein compacta}

As mentioned above, using Theorem~\ref{thm:Intro1} it is not very difficult to show that any continuous image of Eberlein compacta is Eberlein. The reason is that for continuous images of Eberlein compacta it is quite standard to verify the condition \ref{it:eqdefSemiEberleinThroughSkeletonAvailableForContImage} from Theorem~\ref{t:mainEberleinThroughSkeleton2}. We will not provide here the full argument as it is possible to further generalize this observation, see Remark~\ref{rem:contImageEberlein} below. The remainder of this subsection is devoted to the proof of Theorem~\ref{thm:countableCompactEberlein}, which is a generalization of Theorem~\ref{thm:Intro1}, where instead of compactness we assume countable compactness. In order to show the argument, we need a lemma first.
Recall that every real-valued continuous function defined on a countably compact space $D$ is bounded so we may consider the supremum norm on $\C(D)$.
\begin{lemma}\label{l:separatePoints}
Let $D$ be a countably compact space. Suppose that there exist a bounded set $\A\subset \C(D)$ separating the points of $D$ and a full retractional skeleton $\mathfrak{s} = (r_s)_{s\in\Gamma}$ on $D$ such that $f\circ r_s\in\A$, for every $f\in\A$ and $s\in\Gamma$. Then $\A'=\{\beta f\colon f\in \A\}$ separates the points of $\beta D$.
\end{lemma}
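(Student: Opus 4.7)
The plan is to reduce the claim to a Stone--Weierstrass density statement: $\A'$ separates the points of $\beta D$ if and only if the closed unital subalgebra $\overline{\alg(\A' \cup \{1\})}$ of $\C(\beta D)$ equals all of $\C(\beta D)$ (for a constant function cannot aid separation, and by Stone--Weierstrass a closed unital subalgebra of $\C(\beta D)$ separates points precisely when it equals $\C(\beta D)$). Via the canonical isometric identification $\C(\beta D)\cong \C(D)$ (every continuous function on the countably compact $D$ is bounded and extends uniquely to $\beta D$), it suffices to show that every $g \in \C(D)$ is a uniform limit of elements of $\alg(\A \cup \{1\})$. For each $s \in \Gamma$ the set $r_s[D]$ is a compact metrizable subspace of $D$ on whose points $\A$ separates, so by Stone--Weierstrass on $r_s[D]$ the function $g|_{r_s[D]}$ is uniformly approximable by elements of $\alg(\A \cup \{1\})|_{r_s[D]}$; pulling back through $r_s$ and using the hypothesis $f \circ r_s \in \A$ (so pre-composition with $r_s$ preserves the algebra), one deduces that $g \circ r_s$ lies in $\overline{\alg(\A \cup \{1\})}$ in $\C(D)$, for every $s \in \Gamma$.

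It then remains to establish the uniform convergence $g \circ r_s \to g$ on $D$. My plan here is to extend each $r_s$ continuously to $\tilde r_s : \beta D \to r_s[D] \subset D$ (possible since $r_s[D]$ is compact), and to verify that $(\tilde r_s)_{s \in \Gamma}$ constitutes a retractional skeleton on the compact space $\beta D$. The relations $\tilde r_s \circ \tilde r_t = \tilde r_t \circ \tilde r_s = \tilde r_s$ for $s \leq t$, and the $\sigma$-continuity along increasing sequences $s_n \uparrow s$, follow from the corresponding properties on the dense subset $D$ together with a density argument; for the latter one first upgrades the pointwise convergence $r_{s_n} \to r_s$ on $D$ to uniform convergence by applying \cite[Lemma 5.2]{K20} inside the compact metrizable $r_s[D]$. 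The induced set is $\bigcup_s \tilde r_s[\beta D] = \bigcup_s r_s[D] = D$ by fullness of $\mathfrak{s}$, which is dense in $\beta D$. Once $(\tilde r_s)$ is recognised as a retractional skeleton on the compact space $\beta D$, \cite[Lemma 5.2]{K20} applied there gives $h \circ \tilde r_s \to h$ uniformly on $\beta D$ for every $h \in \C(\beta D)$; specialising to $h = \beta g$ and restricting to $D$ yields the required uniform convergence $g \circ r_s \to g$.

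The main obstacle I expect is verifying the condition $\tilde r_s(p) \to p$ in $\beta D$ for $p \in \beta D \setminus D$, which is needed in order for $(\tilde r_s)$ to be a retractional skeleton on $\beta D$ in the strict sense of Definition~\ref{def: r-skeleton}. To address this I would show that any cluster point $p^*$ of the net $(\tilde r_s(p))_{s \in \Gamma}$ in $\beta D$ satisfies $\tilde r_s(p^*) = \tilde r_s(p)$ for every $s \in \Gamma$ (using the relation $\tilde r_s \circ \tilde r_t = \tilde r_s$, valid on $\beta D$ for $s \leq t$, combined with continuity of $\tilde r_s$), and then exploit the Fr\'echet--Urysohn character of $D = D(\mathfrak{s})$ together with the fact that $\A$ separates the points of $D$ to deduce that all such cluster points must coincide with $p$, thereby forcing convergence of the whole net.
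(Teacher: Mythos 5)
Your overall strategy differs from the paper's: you aim for a Stone--Weierstrass density argument, showing $\overline{\alg(\A\cup\{1\})}=\C(D)$, whereas the paper directly produces a separating function for any pair $x\neq y\in\beta D$ by first separating them via a retraction. Both routes hinge on the same key fact, namely that the retractional skeleton $(r_s)_{s\in\Gamma}$ on the countably compact space $D$ extends to a retractional skeleton $(R_s)_{s\in\Gamma}$ on $\beta D$ with $R_s|_D=r_s$ and induced set $D$. The paper obtains this by citing \cite[Proposition 4.5]{CK15}; once you have it, the paper's argument is very short: pick $s$ with $R_s(x)\neq R_s(y)$, note these points lie in $D$, pick $f\in\A$ separating them, and observe $\beta f\circ R_s=\beta(f\circ r_s)\in\A'$. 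Your Stone--Weierstrass detour is correct in its algebraic steps (composing with $r_s$ preserves $\alg(\A\cup\{1\})$ by the hypothesis $f\circ r_s\in\A$, and uniform approximation on $r_s[D]$ transfers to $D$), but it is considerably longer and still needs the extension to $\beta D$ in order to invoke \cite[Lemma~5.2]{K20}.

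The genuine gap is in your last paragraph, where you try to re-derive the extension instead of citing it. Having shown that any cluster point $p^*$ of $(\tilde r_s(p))_s$ satisfies $\tilde r_t(p^*)=\tilde r_t(p)$ for every $t$, you propose to conclude $p^*=p$ by invoking the Fr\'echet--Urysohn property of $D$ and the separation of $\A$ on $D$. But $p$ may lie in $\beta D\setminus D$, and so may $p^*$: the net $(\tilde r_s(p))_s$ lives in $D$, but $D$ need not be closed in $\beta D$, so its cluster points in $\beta D$ need not be in $D$. Separation of $\A$ on $D$ says nothing about distinguishing two points of $\beta D\setminus D$, and the Fr\'echet--Urysohn property of $D$ concerns convergence of sequences from $D$ to points of $D$, not convergence to points of the remainder. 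Worse, inferring $p^*=p$ from $\tilde r_t(p^*)=\tilde r_t(p)$ for all $t$ would require knowing that the family $(\tilde r_t)_t$ separates points of $\beta D$, i.e.\ that $\tilde r_t(q)\to q$ for all $q\in\beta D$ --- which is exactly what you are trying to prove, so the argument as sketched is circular. This step is the full content of \cite[Proposition 4.5]{CK15} and is not a routine density check; you should cite it as the paper does rather than attempt to re-prove it in passing.
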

\begin{proof}
By \cite[Proposition 4.5]{CK15}, there exists a retractional skeleton $\mathfrak{S}=(R_s)_{s\in\Gamma}$ on $\beta D$ such that $D(\mathfrak{S}) = D$ and $R_s|_D = r_s$, for every $s\in\Gamma$. Let $x,y \in \beta D$ be distinct points. Since $\lim_{s\in\Gamma}R_s(x)=x$ and $\lim_{s\in\Gamma}R_s(y)=y$, there exists $s\in \Gamma$ such that $R_s(x)\neq R_s (y)$. Since $R_s(x), R_s(y)\in D$, there exists a function $f\in \A$ such that $f(R_s(x))\neq f(R_s(y))$. Therefore we have $\beta f(R_s(x))\neq\beta f(R_s(y))$. It is easy to see that $(\beta f \circ R_s)|_D= f \circ r_s$, which implies that $\beta f \circ R_s= \beta(f \circ r_s) \in \A'$, since $f \circ r_s\in\A$.
\end{proof}

\begin{remark}
Note that the assumption ``$f\circ r_s\in \A$, for every $f\in \A$ and $s\in\Gamma$'' in Lemma~\ref{l:separatePoints} is essential. Indeed, consider $D=[0,\omega_1)$ and \[\A = \{1_{\{0\}\cup [\alpha+1,\omega_1)}\colon \alpha < \omega_1\}.\]
Then it is easy to see that $\A$ separates the points of $D$ and that $D$ admits the full retractional skeleton $(r_\alpha)_{\alpha<\omega_1}$ given by the formula
\begin{equation*}
   r_{\alpha}(\beta)= \begin{cases} \beta & \beta\leq\alpha \\ \alpha+1 & \alpha<\beta<\omega_1,\end{cases}
\end{equation*}
for every $\alpha<\omega_1$.
However, we have $\beta D = [0,\omega_1]$ and the set \[\A' = \{\beta 1_{\{0\}\cup [\alpha+1,\omega_1)}\colon \alpha<\omega_1\} = \{1_{\{0\}\cup [\alpha+1,\omega_1]}\colon \alpha<\omega_1\}\]
does not separate $0$ from $\omega_1$.
\end{remark}

\begin{thm}\label{thm:countableCompactEberlein}
Let $D$ be a countably compact space. 
Then the following conditions are equivalent.
\begin{enumerate}[label = (\roman*)]
    \item There exists a set $I$ such that $D$ embeds homeomorphically into $(c_0(I),w)$.
    \item $D$ is an Eberlein compact space.
    \item There exist a bounded set $\A\subset \C(D)$ separating the points of $D$ and a full retractional skeleton $\mathfrak{s} = (r_s)_{s\in\Gamma}$ on $D$ such that
    \begin{enumerate}[label=(\alph*)]
        \item $\mathfrak{s}$ is $\A$-shrinking, 
        \item\label{it: compositionassumption} $f\circ r_s\in\A$, for every $f\in\A$ and $s\in\Gamma$.
    \end{enumerate}
   \end{enumerate}
\end{thm}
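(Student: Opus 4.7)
The plan is to establish the chain (i)$\Leftrightarrow$(ii) and (ii)$\Leftrightarrow$(iii). The equivalence (i)$\Leftrightarrow$(ii) is essentially standard. If $D$ is Eberlein, then it embeds into $c_0(I)$ with the product topology by definition, and on the bounded set $h[D]$ the weak and pointwise topologies on $c_0(I)$ agree, giving (i). Conversely, if $D$ embeds into $(c_0(I),w)$ then, being countably compact, its image is weakly countably compact and hence weakly compact by the Eberlein--\v{S}mulian theorem (or by angelicity of the weak topology), so $D$ is an Eberlein compact.

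For (ii)$\Rightarrow$(iii) the plan is to apply Lemma~\ref{lem:semiEbImpliesShrinkingSkeleton} with $K=D$ itself. After rescaling, the Eberlein embedding gives $h:D\to[-1,1]^I$ with $h[D]\subset c_0(I)$, so $h[D]=h[D]\cap c_0(I)$. Lemma~\ref{lem:semiEbImpliesShrinkingSkeleton} then produces a bounded set $\A\subset B_{\C(D)}$ separating the points of $D$ together with a retractional skeleton $\mathfrak{s}$ on $D$ which is $\A$-shrinking and satisfies $f\circ r_s\in\A$. The skeleton is the one from Lemma~\ref{lem:existenceOfTheSkeleton}, whose induced set is $\Sigma(I)\cap h[D]$; since $h[D]\subset c_0(I)\subset\Sigma(I)$, the induced set equals $h[D]$, so $\mathfrak{s}$ is full, as required.

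The main content is (iii)$\Rightarrow$(ii), and the plan is to lift the hypotheses from $D$ to its \v{C}ech--Stone compactification and apply Theorem~\ref{t:mainEberleinThroughSkeleton2}. By \cite[Proposition 4.5]{CK15}, the skeleton $\mathfrak{s}=(r_s)_{s\in\Gamma}$ extends to a retractional skeleton $\mathfrak{S}=(R_s)_{s\in\Gamma}$ on $\beta D$ with $D(\mathfrak{S})=D$ and $R_s|_D=r_s$ for every $s\in\Gamma$. Set $\A':=\{\beta f\colon f\in\A\}\subset\C(\beta D)$; this is bounded (with the same bound as $\A$), and by Lemma~\ref{l:separatePoints}---this is precisely where hypothesis \ref{it: compositionassumption} is used---it separates the points of $\beta D$. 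Since $R_s|_D=r_s$ and $(\beta f)|_D=f$, one has
\[
\rho_{\A'}\bigl(R_{s_n}(x),R_s(x)\bigr)=\rho_{\A}\bigl(r_{s_n}(x),r_s(x)\bigr)
\]
for every $x\in D$ and every increasing sequence $s_n\to s$ in $\Gamma$, so $\mathfrak{S}$ is $\A'$-shrinking with respect to $D$. Because $D\supset D(\mathfrak{S})$, Lemma~\ref{lem:shrinkingImpliesFull} applied in $\beta D$ yields $\lim_{s\in\Gamma'}R_s(x)\in D$ for every up-directed $\Gamma'\subset\Gamma$ and every $x\in D$. Thus the pair $(\beta D,D)$ satisfies condition (ii) of Theorem~\ref{t:mainEberleinThroughSkeleton2}, and its implication (ii)$\Rightarrow$(iv) produces a homeomorphic embedding $h:\beta D\to[-1,1]^J$ with $h[D]\subset c_0(J)$. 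Since $h[\beta D]$ is norm-bounded in $c_0(J)$, the weak and pointwise topologies coincide on it, so $h|_D$ realises $D$ as a countably compact subset of $(c_0(J),w)$, giving~(i); applying (i)$\Rightarrow$(ii) proved above, $D$ is Eberlein.

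The main obstacle is transferring all hypotheses of Theorem~\ref{t:mainEberleinThroughSkeleton2} from $D$ to $\beta D$: in particular, producing a separating, shrinking family of continuous functions on $\beta D$ out of the family $\A$ on $D$. This is the role of Lemma~\ref{l:separatePoints}, and it is exactly the invariance hypothesis \ref{it: compositionassumption} that guarantees $\{\beta f\colon f\in\A\}$ separates points of $\beta D$; without it one could not even start the argument. Once this transfer is carried out, shrinkingness is automatic by restriction and the up-directed limit condition follows from Lemma~\ref{lem:shrinkingImpliesFull}, so that the compact-case theorem does the remaining work.
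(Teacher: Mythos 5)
Your proof is correct and follows essentially the same route as the paper: (i)$\Rightarrow$(ii) via Eberlein--\v{S}mulian, (ii)$\Rightarrow$(iii) via Lemma~\ref{lem:semiEbImpliesShrinkingSkeleton} applied to $K=D$ (the paper gets fullness via Lemma~\ref{lem:shrinkingImpliesFull}, while you observe it directly from $h[D]\subset c_0(I)\subset\Sigma(I)$, but this is a cosmetic difference), and (iii)$\Rightarrow$(i) by lifting the data to $\beta D$ via \cite[Proposition 4.5]{CK15}, invoking Lemma~\ref{l:separatePoints} to see that $\A'=\{\beta f\colon f\in\A\}$ separates $\beta D$, checking $\A'$-shrinkingness with respect to $D$ by restriction, and applying Lemma~\ref{lem:shrinkingImpliesFull} and Theorem~\ref{t:mainEberleinThroughSkeleton2}\ref{it:eqdefSemiEberleinThroughSkeleton}$\Rightarrow$\ref{it:subsetCondition}. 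You also spell out more explicitly than the paper why the resulting embedding $h:\beta D\to[-1,1]^J$ with $h[D]\subset c_0(J)$ yields condition (i), which is a useful clarification.
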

\begin{proof}
$(i)\Rightarrow(ii)$ follows from the classical Eberlein-\v{S}mulian theorem and $(ii)\Rightarrow(iii)$ follows from Theorem~\ref{t:mainEberleinThroughSkeleton2} and Lemma \ref{lem:shrinkingImpliesFull}. If (iii) holds, pick the corresponding set $\A$ and the full retractional skeleton $(r_s)_{s\in\Gamma}$ on $D$. By \cite[Proposition 4.5]{CK15}, there exists a retractional skeleton $\mathfrak{S}=(R_s)_{s\in\Gamma}$ on $\beta D$ such that $D(\mathfrak{S}) = D$ and $R_s|_D = r_s$, for every $s\in\Gamma$.
Consider now the set $\A':=\{\beta f\colon f\in \A\}\subset \C(\beta D)$ and the retractional skeleton $\mathfrak{S}$. By Lemma~\ref{l:separatePoints}, $\A'$ separates the points of $\beta D$. Obviously, $\mathfrak{S}$ is $\A'$-shrinking with respect to $D$ and it is easy to see that for every $f\in \A$ and $s\in\Gamma$ we have $\beta f\circ R_s = \beta(f\circ r_s) \in \A'$. By Lemma \ref{lem:shrinkingImpliesFull} we have that $\lim_{s\in \Gamma'}R_s(x)\in D$, for every $x\in D$ and every up-directed subset $\Gamma'$ of $\Gamma$. Therefore, Theorem~\ref{t:mainEberleinThroughSkeleton2} ensures that (i) holds.
\end{proof}

\subsection{Semi-Eberlein compacta}
 In this subsection we provide new stability results for the class of semi-Eberlein compacta. The most important in this respect is probably Corollary~\ref{cor:images} which implies Theorem~\ref{thm:Intro4}.
 
\begin{lemma}\label{l:contImageConnectionOfSkeletons}
For every suitable model $M$ the following holds: Let $(K,\tau)$ and $(L,\tau')$ be compact spaces, $D\subset K$ a dense subset that is contained in the set induced by a retractional skeleton, and $\varphi:K\to L$ a continuous map such that $\varphi[D]\subset L$ a dense subset that is contained in the set induced by a retractional skeleton. If $\{K,L,\tau,\tau',D,\varphi\}\subset M$, then there are canonical retractions $r_M$ and $R_M$ associated to $M$, $K$ and $D$ and to $M$, $L$ and $\varphi[D]$, respectively, and we have $R_M\circ \varphi = \varphi\circ r_M$.
\end{lemma}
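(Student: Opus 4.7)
The plan is to reduce the identity $R_M\circ\varphi=\varphi\circ r_M$ to the uniqueness clause in the characterization of the canonical retraction given by Theorem~\ref{thm:canonicalRetraction}\ref{it:characterizationFirst}. First, I would extend the suitable-model list $\Phi$ and parameter set $S$ by those of Theorem~\ref{thm:canonicalRetraction}, Proposition~\ref{p:characterizationRSkeletonNotCountableModels} and Lemma~\ref{l:basics-in-M-2}, and fix $M\prec(\Phi;S\cup\{K,L,\tau,\tau',D,\varphi\})$. Applying Proposition~\ref{p:characterizationRSkeletonNotCountableModels} to the pairs $(K,D)$ and $(L,\varphi[D])$ gives that both $\C(K)\cap M$ and $\C(L)\cap M$ separate the points of $\overline{D\cap M}$ and $\overline{\varphi[D]\cap M}$ respectively; by Lemma~\ref{l:retraction}, this yields the canonical retractions $r_M$ onto $\overline{D\cap M}$ and $R_M$ onto $\overline{\varphi[D]\cap M}$ (the former associated to $K,D$, the latter to $L,\varphi[D]$).

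Next, I need to verify that $\varphi\circ r_M$ lands in $\overline{\varphi[D]\cap M}$. Since $\varphi\in M$, Lemma~\ref{l:basics-in-M}\eqref{it: Dom-Rng-in-M} gives $\varphi[D\cap M]\subset M$, hence $\varphi[D\cap M]\subset \varphi[D]\cap M$. By continuity of $\varphi$ and compactness of $K$,
\[
\varphi[r_M[K]]=\varphi\bigl[\overline{D\cap M}\bigr]\subset \overline{\varphi[D\cap M]}\subset \overline{\varphi[D]\cap M}.
\]
This is the only mildly delicate point, and it is the step I expect to be the main (albeit minor) obstacle, since $\varphi$ need not be injective so Lemma~\ref{l:basics-in-M-2}\eqref{it:one-to-one-imageOfM} does not apply directly; we only get a one-sided containment, which fortunately is what we need.

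Then I would exploit the defining property of $r_M$. For every $f\in \C(L)\cap M$, by Lemma~\ref{l:basics-in-M-2}\eqref{it:composition} together with Lemma~\ref{l:basics-in-M-2}\eqref{it:containsspacecontinuousfunctions} (noting that $\C(L)\in M$), we have $f\circ\varphi\in \C(K)\cap M$. Therefore $f\circ\varphi\circ r_M=f\circ\varphi$, that is,
\[
f\bigl(\varphi(r_M(x))\bigr)=f\bigl(\varphi(x)\bigr),\qquad x\in K,\; f\in \C(L)\cap M.
\]
Finally, applying Theorem~\ref{thm:canonicalRetraction}\ref{it:characterizationFirst} to the set $\A:=\C(L)$ (which belongs to $M$ by Lemma~\ref{l:basics-in-M-2}\eqref{it:containsspacecontinuousfunctions}) and to the point $\varphi(x)\in L$, the value $R_M(\varphi(x))$ is the unique point of $\overline{\varphi[D]\cap M}$ satisfying $f(R_M(\varphi(x)))=f(\varphi(x))$ for all $f\in \C(L)\cap M$. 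Since $\varphi(r_M(x))$ lies in $\overline{\varphi[D]\cap M}$ by the previous paragraph and verifies this very identity, uniqueness forces $R_M(\varphi(x))=\varphi(r_M(x))$, which is the desired equality.
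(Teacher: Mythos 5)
Your proof is correct and follows essentially the same route as the paper's: obtain $r_M$ and $R_M$ from Theorem~\ref{thm:canonicalRetraction}, note that $f\circ\varphi\in\C(K)\cap M$ for $f\in\C(L)\cap M$ so that $\varphi(r_M(x))\in\overline{\varphi[D]\cap M}$ satisfies $f(\varphi(r_M(x)))=f(\varphi(x))$, and invoke the uniqueness clause of Theorem~\ref{thm:canonicalRetraction}\ref{it:characterizationFirst}. Your extra care in justifying $\varphi[\overline{D\cap M}]\subset\overline{\varphi[D]\cap M}$ via Lemma~\ref{l:basics-in-M}\eqref{it: Dom-Rng-in-M} (rather than the inapplicable Lemma~\ref{l:basics-in-M-2}\eqref{it:one-to-one-imageOfM}) is a valid observation, though the paper treats it as immediate.
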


\begin{proof}Let $S$ and $\Phi$ be the union of countable sets and finite lists of formulas from the statements of Lemma~\ref{l:basics-in-M-2} and Theorem~\ref{thm:canonicalRetraction}. Let $M\prec (\Phi; S\cup\{K,L,\tau,\tau',D,\varphi\})$.

The existence of $r_M$ and $R_M$ follows from Theorem~\ref{thm:canonicalRetraction}. Moreover, given $x\in K$ and $f\in \C(L)\cap M$, by Lemma~\ref{l:basics-in-M-2} we have $f\circ \varphi\in \C(K)\cap M$ and so by the definition of $r_M$, $f\circ \varphi\circ r_M = f\circ \varphi$ which implies that $y=\varphi(r_M(x))\in\varphi[\overline{D\cap M}]\subset \overline{\varphi[D]\cap M}$ is a point satisfying $f(y) = f(\varphi(x))$ for every $f\in \C(L)\cap M$ and so by the uniqueness property of $R_M(\varphi(x))$ (see Theorem~\ref{thm:canonicalRetraction} \ref{it:characterizationFirst}) we obtain that $R_M(\varphi(x)) = \varphi(r_M(x))$.
\end{proof}

\begin{thm}\label{t:mainContImage}
Let $K$ be a compact space and $D\subset K$ be a dense subset such that there exists a homeomorphic embedding $h:K\to [-1,1]^J$ such that $h[D] = c_0(J)\cap h[K]$. Let us suppose that $\varphi:K\to L$ is a continuous surjection and $\varphi[D]$ is subset of the set induced by a retractional skeleton on $L$.

Then there is a homeomorphic embedding $H:L\to[-1,1]^I$ with $H[\varphi[D]]\subset c_0(I)$. In particular, $L$ is semi-Eberlein.
\end{thm}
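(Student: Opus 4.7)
The strategy is to verify condition (iii) of Theorem~\ref{t:mainEberleinThroughSkeleton2} for the pair $(L,\varphi[D])$ and then appeal to the implication (iii)$\Rightarrow$(iv) already proved there. The hypothesis on $K$ is precisely condition (i), so (i)$\Rightarrow$(iii) of that theorem yields a countable family $\A_K$ of subsets of $B_{\C(K)}$ with associated parameters $(\varepsilon_A)_{A\in\A_K}$ and a retractional skeleton $\mathfrak{s}_K=(r_s)_{s\in\Gamma}$ on $K$ with $D\subset D(\mathfrak{s}_K)$ satisfying (a)--(c). Pull this back via $\varphi$: for each $A\in\A_K$ set $\widetilde A:=\{g\in B_{\C(L)}\colon g\circ\varphi\in A\}$, $\varepsilon_{\widetilde A}:=\varepsilon_A$, and $\A_L:=\{\widetilde A\colon A\in\A_K\}$. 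Since $g\circ\varphi\in B_{\C(K)}$ whenever $g\in B_{\C(L)}$, condition (b) of (iii) for $\A_L$ is immediate from (b) for $\A_K$.

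To build the skeleton $\mathfrak{s}_L$ on $L$, the plan is to use canonical retractions associated to suitable models that simultaneously see the structure on $K$ and on $L$, so that Lemma~\ref{l:contImageConnectionOfSkeletons} provides the intertwining $R_M\circ\varphi=\varphi\circ r_M$. Let $\mathfrak{t}$ be a retractional skeleton on $L$ with $\varphi[D]\subset D(\mathfrak{t})$ (existing by hypothesis). Fix a countable set $S$ and a finite list of formulas $\Phi$ that combine those needed for Theorem~\ref{thm:canonicalRetraction}, Lemma~\ref{l:propertiesOfCanonicalRetractions} and Lemma~\ref{l:contImageConnectionOfSkeletons}, enriching $S$ so that it contains $\{K,L,\tau_K,\tau_L,D,\varphi[D],\varphi,\mathfrak{s}_K,\Gamma,\le,\mathfrak{t}\}$. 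By Theorem~\ref{thm:modelExists} choose a set $R\supset S\cup\Gamma\cup\U_L$ with $R\prec(\Phi;S)$, where $\U_L$ is a basis of the topology of $L$, and, imitating the construction from the proof of Theorem~\ref{thm:subskeletons}, put
$$\M:=\{M\in[R]^{\omega}\colon M\prec(\Phi;S)\},$$
ordered by inclusion; then $\M$ is $\sigma$-complete and up-directed. Each $M\in\M$ admits by Theorem~\ref{thm:canonicalRetraction} the canonical retraction $r_M$ on $K$ associated to $D$ and the canonical retraction $R_M$ on $L$ associated to $\varphi[D]$, and Lemma~\ref{l:contImageConnectionOfSkeletons} gives $R_M\circ\varphi=\varphi\circ r_M$. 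Lemma~\ref{l:propertiesOfCanonicalRetractions} (together with $\U_L\subset\bigcup\M$) implies that $\mathfrak{s}_L:=(R_M)_{M\in\M}$ is a retractional skeleton on $L$; moreover, Theorem~\ref{thm:canonicalRetraction} gives $r_M=r_{\sup(\Gamma\cap M)}$ for every countable $M\in\M$, and cofinality of $\{\sup(\Gamma\cap M):M\in\M\}$ in $\Gamma$ makes $(r_M)_{M\in\M}$ a weak subskeleton of $\mathfrak{s}_K$ in the sense of Definition~\ref{def:weakSubskeleton}. To see $\varphi[D]\subset D(\mathfrak{s}_L)$, take $y=\varphi(x)$ with $x\in D\subset D(\mathfrak{s}_K)$, fix $s_0\in\Gamma$ with $r_{s_0}(x)=x$, and pick $M\in\M$ containing $s_0$; then $r_M(x)=r_{\sup(\Gamma\cap M)}(x)=x$, hence $R_M(y)=\varphi(x)=y$.

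It remains to verify (a) and (c) of (iii) for $\mathfrak{s}_L$ and $\A_L$. For (a), take $A\in\A_K$, $y=\varphi(x)$ with $x\in D$, and an increasing sequence $(M_n)_{n\in\omega}$ in $\M$ with supremum $M_\infty$. The intertwining $R_M\circ\varphi=\varphi\circ r_M$ combined with $g\circ\varphi\in A$ for every $g\in\widetilde A$ yields
$$\rho_{\widetilde A}\bigl(R_{M_n}(y),R_{M_\infty}(y)\bigr)\le\rho_A\bigl(r_{M_n}(x),r_{M_\infty}(x)\bigr),$$
and, since $(r_M)_{M\in\M}$ is a weak subskeleton of $\mathfrak{s}_K$ and $\mathfrak{s}_K$ is $(A,\varepsilon_A)$-shrinking with respect to $D$, the $\limsup$ of the right-hand side is at most $\varepsilon_A$. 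For (c), every up-directed $\M'\subset\M$ produces an up-directed subset $\{\sup(\Gamma\cap M):M\in\M'\}$ of $\Gamma$; condition (c) on $K$ then gives $\lim_{M\in\M'}r_M(x)\in D$ for $x\in D$, and therefore $\lim_{M\in\M'}R_M(y)=\varphi(\lim_{M\in\M'}r_M(x))\in\varphi[D]$. Applying Theorem~\ref{t:mainEberleinThroughSkeleton2} (iii)$\Rightarrow$(iv) now produces the desired embedding $H:L\to[-1,1]^I$ with $H[\varphi[D]]\subset c_0(I)$.

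The main obstacle I expect is the bookkeeping needed to organize the family $\M$ of suitable models so that the resulting canonical retractions genuinely form a retractional skeleton on $L$ whose induced set contains $\varphi[D]$; this requires choosing the superuniverse $R$ large enough to contain $\Gamma$ and a basis of $L$, and then reading off skeleton axioms from Lemma~\ref{l:propertiesOfCanonicalRetractions}. Once this is arranged, the verification of the shrinking and invariance conditions on $L$ is an almost immediate transfer of the corresponding conditions on $K$ via the intertwining identity $R_M\circ\varphi=\varphi\circ r_M$.
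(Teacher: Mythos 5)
Your proposal is correct and follows essentially the same route as the paper's proof: build aligned families of canonical retractions on $K$ and $L$ via suitable models so that the intertwining identity $R_M\circ\varphi=\varphi\circ r_M$ holds, transfer the shrinking and $D$-invariance conditions to $L$ along $\varphi$, and invoke the implication (iii)$\Rightarrow$(iv) of Theorem~\ref{t:mainEberleinThroughSkeleton2}. The only cosmetic difference is that you first apply (i)$\Rightarrow$(iii) on $K$ and then pull back the resulting countable family as $\widetilde A=\{g\in B_{\C(K)}\colon g\circ\varphi\in A\}$, whereas the paper starts from (i)$\Rightarrow$(ii) (via Lemma~\ref{lem:semiEbImpliesShrinkingSkeleton}) and reruns the (ii)$\Rightarrow$(iii) construction using the sets $\B_{n,m}=(\varphi^*)^{-1}(\A_{n,m})$; a direct check shows the two countable families coincide, so the proofs are in substance identical.
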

\begin{proof}
By Lemma~\ref{lem:semiEbImpliesShrinkingSkeleton}, there exists a set $\A\subset  B_{\C(K)}$ separating the points of $K$ and a retractional skeleton $\mathfrak{s} = (r_s)_{s\in\Gamma}$ on $K$ with $D\subset D(\mathfrak{s})$ such that $\mathfrak{s}$ is $\A$-shrinking with respect to $D$ and $\lim_{s\in\Gamma'}r_s(x)\in D$ for every $x\in D$ and every up-directed subset $\Gamma'$ of $\Gamma$. Using Lemma~\ref{l:contImageConnectionOfSkeletons} and an argument similar to the one presented in the proof of Theorem~\ref{thm:subskeletons}, we conclude that there are countable set $S$, finite list of formulas $\Phi$ and a set $R$ such that
\[
\M = \{M\in[R]^\omega\colon M\prec(\Phi,S)\}
\]
ordered by inclusion is an up-directed and $\sigma$-complete set and moreover we have
\begin{itemize}
    \item every $M\in\M$ admits canonical retractions $r_M$ and $R_M$ associated to $M$, $K$ and $D$ and to $M$, $L$ and $\varphi[D]$, respectively;
    \item $\mathfrak{s}_K:=(r_M)_{M\in\M}$ is a weak subskeleton of $\mathfrak{s}$ and $\mathfrak{s}_L:=(R_M)_{M\in\M}$ is a retractional skeleton on $L$;
    \item for every $M\in\M$ we have $R_M\circ \varphi = \varphi\circ r_M$.
\end{itemize}
In particular, we have that $D(\mathfrak{s}) = D(\mathfrak{s}_K)$, $\mathfrak{s}_K$ is $\A$-shrinking with respect to $D$ and $\lim_{M\in\M'}r_M(x)\in D$ for every $x\in D$ and every up-directed subset $\M'$ of $\M$. We obviously have $\varphi[D]\subset D(\mathfrak{s}_L)$. Consider now the isometric embedding $\varphi^*:\C(L)\to\C(K)$ given by the formula $\varphi^*f:=f\circ \varphi$, $f\in\C(L)$. Further, similarly as in the proof of  \ref{it:eqdefSemiEberleinThroughSkeleton}$\Rightarrow$\ref{it:eqdefSemiEberleinThroughSkeletonAvailableForContImage} of Theorem~\ref{t:mainEberleinThroughSkeleton2} for every $n\in\en$ put
\[
\A_n:=\left\{\sum_{i=1}^k a_i\Pi_{j=1}^n f_{i,j}\colon f_{i,j}\in\A,\; k\in\en\;, \sum_{i=1}^k |a_i|\leq n\right\},
\]
for $m\in\en$ we further put
\[
\A_{n,m}:=(\A_n + \tfrac{1}{2m}B_{\C(K)})\cap B_{\varphi^*\C(L)},\quad \B_{n,m}:=(\varphi^*)^{-1}(\A_{n,m})
\]
and we observe that $B_{\C(L)} = \bigcup_{n\in\en} \B_{n,m}$ for every $m\in\en$. Moreover, observe that for every $n,m\in\en$ the retractional skeleton $\mathfrak{s}_L$ is $\big(\B_{n,m},\tfrac{1}{m}\big)$-shrinking with respect to $\varphi[D]$. Indeed, given $x\in D$ and an increasing sequence $(M_k)_{k \in \en}$ in $\M$ with $M = \sup_{k \in \en} M_k$, we have
\[\begin{split}
\rho_{\B_{n,m}} & (R_{M_k}(\varphi(x)),R_M(\varphi(x))) = \sup_{f\in\C(L), f\circ \varphi\in\A_{n,m}}\big|f(\varphi(r_{M_k}(x))) - f(\varphi(r_{M}(x)))\big|\\ & \leq \rho_{\A_{n,m}}(r_{M_k}(x),r_M(x)) \leq \rho_{\A_{n}}(r_{M_k}(x),r_M(x)) + \tfrac{1}{m}\\  & \leq n^2\rho_{\A}(r_{M_k}(x),r_M(x)) + \tfrac{1}{m},
\end{split}\]
so using that $\mathfrak{s}$ is $\A$-shrinking with respect to $D$, we obtain \[\limsup_k \rho_{\B_{n,m}}(R_{M_k}(x),R_M(x))\leq \tfrac{1}{m}.\]
Finally, for every $x\in D$ and up-directed subset $\M'$ of $\M$ we have \[\lim_{M\in\M'}R_M(\varphi(x)) = \varphi(\lim_{M\in\M'} r_M(x))\in \varphi[D].\]
Hence, application of  Theorem~\ref{t:mainEberleinThroughSkeleton2} \ref{it:eqdefSemiEberleinThroughSkeletonAvailableForContImage}$\implies$\ref{it:subsetCondition} finishes the proof.
\end{proof}

\begin{cor}\label{cor:firstContImage}
Let $K$ be a semi-Eberlein compact space, $\varphi:K\to L$ be a continuous surjection and $D\subset K$ be a dense subset such that there exists a homeomorphic embedding $h:K\to [-1,1]^J$ with $h[D]= c_0(J)\cap h[K]$. Then $S:=h^{-1}[\Sigma(J)]$ is the unique set induced by a retractional skeleton in $K$ with $D\subset S$. Assume that one of the following conditions holds:
\begin{enumerate}
    \item\label{it:firstSufficientCondition} $\varphi^*\C(L) = \{f\circ \varphi\colon f\in\C(L)\}$ is $\tau_p(S)$-closed in $\C(K)$;
    \item\label{it:secondSufficientCondition} The set $\{(x,y)\in S\times S\colon \varphi(x)=\varphi(y)\}$ is dense in $\{(x,y)\in K\times K\colon \varphi(x)=\varphi(y)\}$.
\end{enumerate}
Then there is a homeomorphic embedding $H:L\to[-1,1]^I$ with $H[\varphi[D]]\subset c_0(I)$. In particular, $L$ is semi-Eberlein.
\end{cor}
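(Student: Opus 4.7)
The strategy is to reduce to Theorem~\ref{t:mainContImage} by verifying, under either hypothesis, that $\varphi[D]$ is contained in the set induced by some retractional skeleton on $L$.

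First, I would address the uniqueness of $S$. Since $h[S]=h[K]\cap\Sigma(J)$, Lemma~\ref{lem:existenceOfTheSkeleton} applied to $h[K]\subset[-1,1]^J$ yields a commutative retractional skeleton on $h[K]$ inducing exactly $h[S]$, and conjugating by $h$ transports this skeleton back to $K$. For uniqueness, if $S'\subset K$ is another set induced by a retractional skeleton with $D\subset S'$, then the density of $D\subset S\cap S'$ together with \cite[Lemma 3.2]{C14} (used twice in the proof of Theorem~\ref{thm:Intro3}) forces $S=S'$.

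The core step is to produce a retractional skeleton on $L$ whose induced set contains $\varphi[D]$, after which Theorem~\ref{t:mainContImage} completes the argument. I would fix the commutative retractional skeleton $(r_s)_{s\in\Gamma}$ on $K$ with $D(\mathfrak{s})=S$ supplied above, so that $r_s[K]\subset S$ for every $s$. The plan is to show that each map $\varphi\circ r_s$ factors through $\varphi$; once this is established, the induced maps $R_s:L\to L$ satisfying $R_s\circ\varphi=\varphi\circ r_s$ are continuous retractions (continuity follows since $\varphi$ is a continuous surjection between compact Hausdorff spaces and hence a quotient map), and the four axioms of a retractional skeleton transfer from $(r_s)$ to $(R_s)$ using the surjectivity of $\varphi$. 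The resulting commutative retractional skeleton on $L$ has induced set containing $\varphi[S]\supset\varphi[D]$, and its composition with $\varphi^{-1}h$ then exhibits $L$ as in the statement.

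The main obstacle is the factorization of $\varphi\circ r_s$ through $\varphi$ under each hypothesis. Under (ii), the factorization follows by approximation: given $x,y\in K$ with $\varphi(x)=\varphi(y)$, pick a net $(x_\lambda,y_\lambda)\in S\times S$ with $\varphi(x_\lambda)=\varphi(y_\lambda)$ converging to $(x,y)$, establish the equality $\varphi(r_s(x_\lambda))=\varphi(r_s(y_\lambda))$ for elements of $S$ (which reduces to a commutative-skeleton computation inside the metrizable range $r_s[K]$), and pass to the limit using continuity of $\varphi\circ r_s$. Under (i), the factorization is equivalent to $f\circ r_s\in\varphi^*\C(L)$ for every $f\in\varphi^*\C(L)$; this is deduced from the $\tau_p(S)$-closedness by approximating $f\circ r_s$ in the $\tau_p(S)$-topology by suitable elements of $\varphi^*\C(L)$, exploiting the interaction between the commutative skeleton $(r_s)$ and the projectional maps $P_s(f):=f\circ r_s$ from Remark~\ref{rem:comparison}. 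The delicate point under (i) is that $\varphi^*\C(L)$ need not a priori be $P_s$-invariant, and the $\tau_p(S)$-closedness is precisely the closure condition that makes such invariance follow; under (ii) the subtlety is rather that the metrizability of $r_s[K]$ must be used to transport the fibre identifications from $S$ through the nonmetrizable ambient space.
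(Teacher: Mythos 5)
Your overall strategy --- reduce to Theorem~\ref{t:mainContImage} by showing that $\varphi[D]$ lies in a set induced by a retractional skeleton on $L$ --- matches the paper's, and your treatment of the uniqueness of $S$ via \cite[Lemma 3.2]{C14} is correct. However, the core construction you propose has a genuine gap.

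You claim that under either hypothesis each map $\varphi\circ r_s$ factors through $\varphi$, and you then push the retractions $r_s$ forward to retractions $R_s$ on $L$. That factorization is false in general, even when $D=K$ and both hypotheses hold. Consider $K=\{0\}\cup\{e_\alpha:\alpha<\omega_1\}\subset c_0(\omega_1)$ (the one-point compactification of a discrete set of size $\omega_1$), so $D=S=K$, and the commutative skeleton from Lemma~\ref{lem:existenceOfTheSkeleton} consists of the coordinate restrictions $r_A(x)=x|_A$ for countable $A$. Let $\varphi:K\to L$ collapse each pair $\{e_{2\alpha},e_{2\alpha+1}\}$ to a single point $e'_\alpha$, so $L\cong K$. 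Both hypotheses hold: \eqref{it:secondSufficientCondition} is trivial since $S\times S=K\times K$, and \eqref{it:firstSufficientCondition} holds because $\varphi^*\C(L)$ is exactly the set of functions constant on $\varphi$-fibres, a $\tau_p(K)$-closed subspace of $\C(K)$. Yet for $A=\{0\}$ we have $\varphi(e_0)=\varphi(e_1)=e'_0$ while $\varphi(r_A(e_0))=e'_0$ and $\varphi(r_A(e_1))=\varphi(0)=0\neq e'_0$, so $\varphi\circ r_A$ does not factor through $\varphi$. Equivalently, taking $g\in\C(L)$ separating $0$ from $e'_0$ and $f=g\circ\varphi$, the function $f\circ r_A$ is not constant on $\varphi$-fibres, so $\varphi^*\C(L)$ is not $P_A$-invariant despite being $\tau_p(S)$-closed. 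This also shows that your proposed approximation argument under \eqref{it:secondSufficientCondition} cannot succeed: the identity $\varphi(r_s(x))=\varphi(r_s(y))$ already fails for pairs $(x,y)\in S\times S$ with $\varphi(x)=\varphi(y)$, so there is nothing to pass to the limit.

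The retractional skeleton on $L$ inducing $\varphi[S]$ is in general unrelated, retraction by retraction, to the skeleton on $K$. The paper handles this step by citation: it first observes that \eqref{it:secondSufficientCondition} implies \eqref{it:firstSufficientCondition} by \cite[Lemma 2.8]{K00}, and then under \eqref{it:firstSufficientCondition} invokes \cite[Theorem 4.5]{cuthSimul} --- which works through suitable models / elementary submodels rather than through individual retractions --- to conclude that $\varphi[S]$ is induced by a retractional skeleton on $L$, after which Theorem~\ref{t:mainContImage} applies. That invoked result is precisely the nontrivial ingredient your argument is missing.
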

\begin{proof}
By Lemma~\ref{lem:existenceOfTheSkeleton}, $\Sigma(J)\cap h[K]$ is induced by a retractional skeleton and so its preimage $S$ is induced by a retractional skeleton as well. The uniqueness of $S$ follows from \cite[Lemma 3.2]{C14}. If \eqref{it:firstSufficientCondition} holds, then by \cite[Theorem 4.5]{cuthSimul} the set $\varphi[S]$ is induced by a retractional skeleton and so we may apply Theorem~\ref{t:mainContImage}. Finally, by \cite[Lemma 2.8]{K00} condition \eqref{it:secondSufficientCondition} implies \eqref{it:firstSufficientCondition}.
\end{proof}

\begin{remark}\label{rem:contImageEberlein}
Note that as a very particular case we obtain that a continuous image of an Eberlein compact space is Eberlein, since in this case we have $D=K$ and thus condition \eqref{it:secondSufficientCondition} in Corollary \ref{cor:firstContImage} is trivially satisfied.
\end{remark}

The following answers the second part of \cite[Question 6.6]{KL04}.

\begin{cor}\label{cor:images}
Let $K$ be a semi-Eberlein compact space, $\varphi:K\to L$ be a continuous surjection. Assume that one of the following conditions holds:
\begin{enumerate}
    \item $K$ is Corson.
    \item $\varphi$ is open and $K$ or $L$ has a dense set of $G_\delta$ points.
\end{enumerate}
Then $L$ is semi-Eberlein.
\end{cor}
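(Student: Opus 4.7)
The plan is to apply Corollary~\ref{cor:firstContImage} in both cases, verifying its condition~\eqref{it:secondSufficientCondition}: the set $\{(u,v)\in S\times S\colon \varphi(u)=\varphi(v)\}$ is dense in $\{(u,v)\in K\times K\colon \varphi(u)=\varphi(v)\}$, where $S:=h^{-1}[\Sigma(J)]$ is the subset associated to a given semi-Eberlein embedding $h$.

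For Case~(1), since $K$ is Corson, $K$ itself is induced by a (full, commutative) retractional skeleton; as $S$ is also induced by a retractional skeleton and contains the dense set $D$, the uniqueness invoked in the proof of Corollary~\ref{cor:firstContImage} (i.e.\ \cite[Lemma~3.2]{C14}) forces $S=K$, and so condition~\eqref{it:secondSufficientCondition} is automatic.

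For Case~(2), I first reduce to the sub-case where $L$ has a dense set of $G_\delta$-points: if $x\in K$ is a $G_\delta$-point with countable local base $\{U_n\}$ and $\varphi$ is open, then $\{\varphi(U_n)\}$ is a countable local base at $\varphi(x)$, so $\varphi(x)$ is a $G_\delta$-point of $L$; openness of $\varphi$ additionally sends dense sets to dense sets, so a dense set of $G_\delta$-points of $K$ is mapped onto a dense set of $G_\delta$-points of $L$. Now fix $(x,y)$ with $\varphi(x)=\varphi(y)$ and open $U\ni x$, $V\ni y$; by normality shrink to $U'\ni x$, $V'\ni y$ with $\overline{U'}\subset U$ and $\overline{V'}\subset V$. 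The open set $\varphi(U')\cap\varphi(V')$ contains $\varphi(x)$, hence contains a $G_\delta$-point $z'\in L$; by Urysohn, $\{z'\}=f^{-1}(0)$ for some $f\in\C(L)$. Pick $x_1\in\varphi^{-1}(z')\cap U'$ and $y_1\in\varphi^{-1}(z')\cap V'$, and a countable suitable model $M$ containing $\{K,L,\tau,\tau',D,\varphi,f,U',V',x_1,y_1\}$. Theorem~\ref{thm:canonicalRetraction} then yields a canonical retraction $r_M$ with $r_M[K]=\overline{D\cap M}\subset S$, the last inclusion using that $S$ is countably closed in $K$ (because $\Sigma(J)$ is countably closed in $[-1,1]^J$). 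Since $f\circ\varphi\in\C(K)\cap M$, the defining identity $g=g\circ r_M$ for $g\in\C(K)\cap M$ gives $f(\varphi(r_M(x_1)))=f(\varphi(x_1))=0$, so $\varphi(r_M(x_1))=z'$, and similarly $\varphi(r_M(y_1))=z'$. Finally, Lemma~\ref{l:retraction}~\eqref{it: imageofclosure} applied with $B=U'\cap D$ gives $r_M(x_1)\in\overline{U'\cap D\cap M}\subset\overline{U'}\subset U$, and analogously $r_M(y_1)\in V$; thus $(r_M(x_1),r_M(y_1))\in(S\times S)\cap(U\times V)$ lies in the fiber relation of $\varphi$.

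The main subtlety is the coordination between the Urysohn function $f$ encoding that $\{z'\}$ is a zero set in $L$ and the defining identity of the canonical retraction: placing $f$ into the suitable model $M$ forces $f\circ\varphi\circ r_M=f\circ\varphi$, and hence $r_M$ preserves each fiber $\varphi^{-1}(z')$ over a $G_\delta$-point $z'$, which is precisely what is needed for the density approximation in Case~(2).
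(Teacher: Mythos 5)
Your proof is correct, and Case~(1) is handled exactly as the paper does: since Corson compacta admit a full retractional skeleton, the uniqueness of the set induced by a retractional skeleton and containing $D$ (from Corollary~\ref{cor:firstContImage}, resting on \cite[Lemma~3.2]{C14}) forces $S=K$, whence condition~\eqref{it:secondSufficientCondition} there is trivial. The reduction of ``$K$ has dense $G_\delta$-points'' to ``$L$ has dense $G_\delta$-points'' via openness of $\varphi$ is also the same as the paper's.

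For Case~(2) with $L$ having a dense set of $G_\delta$-points, you take a genuinely different route. The paper invokes the external result \cite[Lemma~6.1]{cuthSimul} to conclude that $\varphi[S]$ is induced by a retractional skeleton on $L$, and then applies Theorem~\ref{t:mainContImage} directly (only remarking in passing that condition~\eqref{it:secondSufficientCondition} of Corollary~\ref{cor:firstContImage} could also be extracted from the proof of that cited lemma). You instead give a self-contained, direct verification of condition~\eqref{it:secondSufficientCondition} using only the paper's own machinery: a countable suitable model $M$ containing the Urysohn function $f$ witnessing that $\{z'\}$ is a zero set, the defining identity $g=g\circ r_M$ for $g\in\C(K)\cap M$ (applied to $g=f\circ\varphi$) to show that $r_M$ preserves $\varphi^{-1}(z')$, Lemma~\ref{l:retraction}~\eqref{it: imageofclosure} to keep $r_M(x_1), r_M(y_1)$ inside the prescribed neighborhoods, and the countable closedness of $S$ to land in $S$. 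This is a slightly longer but cleaner argument in the context of this paper, since it avoids one appeal to \cite{cuthSimul} and instead showcases how the canonical-retraction framework developed in Section~\ref{sec:models} handles the fiber-density condition directly. (You still route through Corollary~\ref{cor:firstContImage}, which itself cites \cite[Theorem~4.5]{cuthSimul} and \cite[Lemma~2.8]{K00}, so the result is not fully independent of that paper, but the specific $G_\delta$-points lemma is bypassed.) All the steps check out: $G_\delta$-points of compact Hausdorff spaces are zero sets, $f\circ\varphi\in\C(K)\cap M$ by Lemma~\ref{l:basics-in-M-2}~\eqref{it:composition}, $U'\cap D\in M$ by Lemma~\ref{l:basics-in-M}~\eqref{operations-sets-in-M}, $x_1\in\overline{U'\cap D}$ since $U'$ is open and $D$ is dense, and $\overline{D\cap M}\subset S$ because $S$ is countably closed.
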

\begin{proof}
Let $D\subset K$ be a dense subset such that there exists a homeomorphic embedding $h:K\to [-1,1]^J$ with $h[D]= c_0(J)\cap h[K]$ and put $S:=h^{-1}[\Sigma(J)]$. Note that $S$ is a dense $\Sigma$-subset of $K$ and, by Corollary~\ref{cor:firstContImage}, it is the unique set induced by a retractional skeleton with $D\subset S$.

If $K$ is Corson then it admits a full retractional skeleton, so by the uniqueness of $S$ we have that $S=K$ and thus condition \eqref{it:secondSufficientCondition} from Corollary~\ref{cor:firstContImage} is obviously satisfied.

If $\varphi$ is open and $L$ has dense set of $G_\delta$ points, then by \cite[Lemma 6.1]{cuthSimul} the set $\varphi[S]$ is induced by a retractional skeleton and thus we may apply directly Theorem~\ref{t:mainContImage} (note that inspecting the proof of \cite[Lemma 6.1]{cuthSimul} one can observe that even the condition \eqref{it:secondSufficientCondition} from Corollary~\ref{cor:firstContImage} is satisfied).

Finally, if $\varphi$ is open and $K$ has dense set of $G_\delta$ points then it is easy to see that $L$ has dense set of $G_\delta$ points and we may apply the above.
\end{proof}

Let us note that if $D$ is a dense $\Sigma$-subset of $K$ and $\varphi:K\to L$ is a continuous retract, it may happen that $\varphi[D]$ is not $\Sigma$-subset of $K$, see \cite[Remark 3.25]{K00}. Thus, it is not possible to apply directly Theorem~\ref{t:mainContImage} for the case when $\varphi$ is a retraction and this is basically the reason why we do not know how to answer also the first part of \cite[Question 6.6]{KL04} using our methods.

\section{Open questions and remarks}\label{sec:questions}
    
In Section~\ref{sec:models} we obtained as an application of our methods that for a countable family of retractional skeletons inducing the same set there is a common weak subskeleton, see Theorem \ref{thm:subskeletons}. It would be interesting to know whether we can find even a subskeleton (not only a weak one).

\begin{question}\label{q: weaksubtosub}
In Theorem \ref{thm:subskeletons}, is it possible to obtain a subskeleton instead of a weak subskeleton?
\end{question}

When working with retractional skeletons, their index sets are quite mysterious. For Banach spaces with a projectional skeleton, the index set may be chosen to consist of the ranges of the involved projections (ordered by inclusion), see \cite[Theorem 4.1]{CK14}. We wonder whether something similiar holds for spaces with a retractional skeleton.

\begin{question}
Let $K$ be a compact space and let $D\subset K$ be induced by a retractional skeleton on $K$. Does there exist a family of retractions $\{r_F\colon F\in\F\}$ indexed by a family of compact spaces $\F$ ordered by inclusion satisfying the following conditions?
    \begin{enumerate}[label = (\roman*)]
        \item\label{it:sigma} whenever $(F_n)_{n\in\omega}$ is an increasing sequence from $\F$, then $\sup_n F_n = \overline{\bigcup_n F_n}\in\F$,
        \item\label{it:range} for every $F\in\F$ we have $r_F[K] = F$,
        \item\label{it:sub} $(r_F)_{F\in\F}$ is a retractional skeleton on $K$ inducing the set $D$.
    \end{enumerate}
\end{question}

Note that in Proposition~\ref{prop:semiEberleinFarmakiImproved} we proved a result in a sense very similar to the characterization of Eberlein compacta from \cite[Theorem 2.9]{F87} (see also \cite[Theorem 10]{FGZ}). We wonder whether an analogoue of \cite[Theorem 10]{FGZ} holds also in the context semi-Eberlein compacta. Note that one implication follows from  Proposition~\ref{prop:decompImpliesSemiEberlein}, so a positive answer to the following question would give a characterization of semi-Eberlein compact subspaces of $[-1,1]^I$.

\begin{question}
Let $K\subset [-1,1]^I$ be a compact space such that $\Sigma(I)\cap K$ is dense in $K$. Let $K$ be semi-Eberlein. Does there exist $D\subset \Sigma(I)\cap K$ which is dense in $K$ such that for every $\varepsilon>0$ there exists a decomposition $I = \bigcup_{n=0}^\infty I_n^\varepsilon$ satisfying
\[\forall n\in\omega\; \forall x\in D:\quad |\{i\in I_n^\varepsilon\colon |x(i)|>\varepsilon\}|<\omega?\]
\end{question}

Finally, let us emphasize that we believe that Theorem~\ref{thm:Intro3} gives quite a big flexibility to consider other subclasses of Valdivia compact spaces and characterize them using the notion of retractional skeletons. The reason why we believe so is, that by Theorem~\ref{thm:Intro3} we may consider any set induced by a retractional skeleton to be a subset of $\Sigma(I)$ (where $\A\subset \C(K)$ plays the role of the set $I$); moreover, for subsets of $\Sigma(I)$ several classes of compact spaces were characterized using their evaluations on the set $I$, see \cite{FGZ}. Thus, there is enough room for further possible research by considering those classes of compacta and try to develop the right notion which would give a characterization using retractional skeletons.

\end{document}